\def\NZQ{\mathbb}               
\def\QQ{{\NZQ Q}}
\def\ZZ{{\NZQ Z}}
\def\RR{{\NZQ R}}
\newtheorem{Theorem}{Theorem}[section]
\newtheorem{Lemma}[Theorem]{Lemma}
\newtheorem{Corollary}[Theorem]{Corollary}
\newtheorem{Proposition}[Theorem]{Proposition}
\newtheorem{Example}[Theorem]{Example}
\newtheorem{Definition}[Theorem]{Definition}
\let\epsilon\varepsilon
\let\phi=\varphi
\let\kappa=\varkappa
\def \s {\sigma}
\begin{document}

\title{A generalization of the Abhyankar Jung theorem to associated graded rings of valuations}
\author{Steven Dale Cutkosky}
\thanks{Partially supported by NSF}

\address{Steven Dale Cutkosky, Department of Mathematics,
University of Missouri, Columbia, MO 65211, USA}
\email{cutkoskys@missouri.edu}

\begin{abstract} Suppose that $R\rightarrow S$ is an extension of local domains and $\nu^*$ is a valuation dominating $S$. We consider the natural extension of associated graded rings along the valuation ${\rm gr}_{\nu^*}(R)\rightarrow {\rm gr}_{\nu^*}(S)$. We give examples showing that in general, this extension does not share good properties of the extension $R\rightarrow S$, but after enough blow ups above the valuations, good properties of the extension  $R\rightarrow S$ are reflected in the extension of associated graded rings. Stable properties of this extension (after blowing up) are much better in characteristic zero than in positive characteristic. Our main result is a generalization of the Abhyankar-Jung theorem which holds for extensions of associated graded rings along the valuation, after enough blowing up.
\end{abstract}

\maketitle
\section{Introduction}
Suppose that  $K\rightarrow K^*$ is a finite field extension, $\nu^*$ is a valuation of $K^*$ and $\nu=\nu^*|K$. We will consider local rings $R$  of $K$ dominated by $\nu$ (we do not require local rings to be Noetherian) and the local ring $S$ of $K^*$ which is the localization of the integral closure of $R$ in $K^*$ at the center of $\nu^*$ (notation is given in Section \ref{SecNoc}). This extension $R\rightarrow S$ occurs in local uniformization with $R$ regular as a first step in reducing the multiplicity of $S$.

Let $e=[\Gamma_{\nu^*}:\Gamma_{\nu}]$ be the reduced ramification index of $\nu^*$ over $\nu$ and $f=[V_{\nu^*}/m_{\nu^*}:V_{\nu}/m_{\nu}]$ be the residue degree of the 
valuation rings $V_{\nu^*}$ of $\nu^*$ and $V_{\nu}$  of $\nu$ and $\delta(\nu^*/\nu)$ be the defect of $\nu^*$ over $\nu$ (these concepts are reviewed in Section \ref{SecNoc}).

Bernard Teissier has defined the associated graded ring along a valuation of a ring $R$ which is contained in a valuation ring in his work on resolution of singularities \cite{T}, \cite{T2} as follows. For $\gamma\in \Gamma_{\nu}$, let $\mathcal P_{\gamma}(R)=\{f\in R\mid \nu(f)\ge \gamma\}$ and
$\mathcal P_{\gamma}^+=\{f\in R\mid \nu(f)>\gamma\}$. The associated graded ring of $R$ along $\nu$ is defined as
$$
{\rm gr}_{\nu}(R)=\bigoplus_{\gamma\in \Gamma_{\nu}}\mathcal P_{\gamma}(R)/\mathcal P_{\gamma}^+(R).
$$
This ring is almost always non Noetherian. The quotient field ${\rm QF}({\rm gr}_{\nu}(R))={\rm QF}({\rm gr}_{\nu}(V_{\nu}))$ (Lemma \ref{LemmaQ1}) and 
$[{\rm QF}({\rm gr}_{\nu^*}(S)):{\rm QF}({\rm gr}_{\nu}(R))]=ef$ (Proposition \ref{PropQ3}).

In this paper, we show that after possibly replacing $R$ with  a sequence of blow ups along the valuation, the extension ${\rm gr}_{\nu}(R)\rightarrow {\rm gr}_{\nu^*}(S)$ shares many good properties with the extension $R\rightarrow S$. As is to be expected, stable properties under blowing up in characteristic zero are much better than in positive characteristic (compare Example \ref{Ex3} and Theorem \ref{Theorem1*}).

We begin by giving four examples where $R$ is a regular local ring and $\nu^*$ has rank 1, illustrating possible behavior of these extensions.
Example \ref{Ex4} (in Section \ref{SecA}), which is valid over any field of characteristic $\ne 2$, is of a regular local ring $R$ such that $S$ is not regular and ${\rm gr}_{\nu^*}(S)={\rm gr}_{\nu}(R)$. This behavior is caused by the fact that the rank of a valuation dominating a local ring may increase when extending the valuation to the completion of the ring (\cite{Sp}, \cite{HS}, \cite{C}, \cite{GAST}). If there are immediate extensions of $\nu$ and $\nu^*$ to
valuations dominating $\hat R$ and $\hat S$, then we have that $\hat R=\hat S$ if ${\rm gr}_{\nu^*}(S)={\rm gr}_{\nu}(R)$ (Proposition \ref{Prop42}) so the pathology of Example \ref{Ex4} cannot hold in this case.

The next three examples show that the extension ${\rm gr}_{\nu}(R)\rightarrow {\rm gr}_{\nu^*}(S)$ does not in general reflect good properties of the extension $R\rightarrow S$, so blowing up along the valuation is necessary to obtain the results of this paper.

 Example \ref{Ex1} is an example  over any field $k$, which shows that it is possible for ${\rm gr}_{\nu^*}(S)$ to not be integral over ${\rm gr}_{\nu}(R)$.
 
 \begin{Example}\label{Ex1} Let $k$ be a field, $K=k(u,y)$ and $K^*=k(x,y)$ be rational function fields over $k$ with an inclusion $K\rightarrow K^*$ induced by the substitution $u=yx+x^2$. $K^*$ is Galois over $K$ with Galois group $\ZZ_2$ if $\mbox{char}(k)\ne 2$.
 Let $R=k[u,y]_{(u,y)}$  and $S=k[x,y]_{(x,y)}$.  We have that $S$ is the integral closure of $R$ in $K^*$.  Define a rank 1 valuation $\nu^*$ on $K^*$ which dominates $S$ by prescribing that $\nu^*(x)=1$ and $\nu^*(y)=\pi$.  We have semigroups
 $$
 S^S(\nu^*)=\{\nu^*(f)\mid f\in S\setminus\{0\}\}=\ZZ_{\ge 0}+\ZZ_{\ge 0}\pi
 $$
 and 
 $$
 S^R(\nu)=\{\nu^*(f)\mid f\in R\setminus\{0\}\}=\ZZ_{\ge 0}(1+\pi)+\ZZ_{\ge 0}\pi.
 $$
 We have that $n\nu^*(x)\not \in S^R(\nu)$ for all $n\in \ZZ_{>0}$, so ${\rm gr}_{\nu^*}(S)$ is not integral over ${\rm gr}_{\nu}(R)$.
 \end{Example}
 
Example \ref{Ex2} (from \cite{CV1}), which is valid over any field, shows that it is possible for both $R$ and $S$ to be regular and the extension of associated graded rings to be integral but not finite. 

\begin{Example}\label{Ex2}  
Let $k$ be a field, $K=k(u,v)$ and $K^*=k(x,y)$ be rational function fields over $k$ with an inclusion $K\rightarrow K^*$ induced by the substitutions $u=x^2$ and $v=y^2$. $K^*$ is Galois over $K$ with Abelian Galois group if $\mbox{char}(k)\ne 2$.
 Let $R=k[u,v]_{(u,v)}$  and $S=k[x,y]_{(x,y)}$.  We have that $S$ is the integral closure of $R$ in $K^*$.  By  Example 9.4 \cite{CV1},
 there is a valuation $\nu^*$ of $K^*$ with value group $\Gamma_{\nu^*}=\frac{1}{3^{\infty}}\ZZ$ and $V_{\nu^*}/m_{\nu^*}=k$ such that ${\rm gr}_{\nu^*}(S)$ is integral over ${\rm gr}_{\nu}(R)$ but is not finite.
 \end{Example}

Example \ref{Ex3} (from \cite{CP}) is an example of an immediate extension (so that ${\rm gr}_{\nu^*}(S)$ and ${\rm gr}_{\nu}(R)$ have the same quotient fields) in characteristic $p>0$ such that ${\rm gr}_{\nu^*}(S)$ is integral (purely inseparable) but not finite over ${\rm gr}_{\nu}(R)$. In fact, ${\rm gr}_{\nu^*}(S)^p\subset {\rm gr}_{\nu}(R)$. In this example  the property  of being not finite and purely inseparable is stable under sequences of monoidial transforms along $\nu^*$.

\begin{Example}\label{Ex3} 
Let $k$ be a field of characteristic $p>0$, $K=k(u,v)$ and $K^*=k(x,y)$ be rational function fields over $k$ with an inclusion $K\rightarrow K^*$ induced by the substitutions $u=\frac{x^p}{1-x^{p-1}}$ and $v=y^p-x^{p-1}y$. $K^*$ is a tower of two Artin-Schreier extensions over $K$. 
 Let $R=k[u,v]_{(u,v)}$  and $S=k[x,y]_{(x,y)}$.  We have that $S$ is the integral closure of $R$ in $K^*$.  In  the main example of Section 7.11  \cite{CP},
  a valuation $\nu^*$ of $K^*$ is constructed which dominates $S$. The value groups are $\Gamma_{\nu^*}=\frac{1}{p^{\infty}}\ZZ=\Gamma_{\nu}$ and $V_{\nu^*}/m_{\nu^*}=k$ so that $\nu^*/\nu$ is an immediate extension. Further, $\nu^*$
  is the unique extension of $\nu=\nu^*|K$ to $K^*$. Thus the defect $\delta(\nu^*/\nu)=2$. Since the extension is immediate, we have that
  ${\rm QF}({\rm gr}_{\nu^*}(S))={\rm QF}({\rm gr}_{\nu}(R))$. In \cite{CP}, the associated graded rings are calculated as
  $$
  {\rm gr}_{\nu}(R)=k[U_0,U_1,\ldots]/(U_1^{p^2}-U_0,\{U_j-U_0^{p^{2j-2}}U_{j-1}\}_{2\le j})
  $$
  and
  $$
  {\rm gr}_{\nu^*}(S)=k[X_0,X_1,\ldots]/(X_1^{p^2}-X_0,\{X_j-X_0^{p^{2j-2}}X_{j-1}\}_{2\le j})
  $$
  and the inclusion ${\rm gr}_{\nu}(R)\rightarrow {\rm gr}_{\nu^*}(S)$  of $k$-algebras is obtained by the substitutions $U_j=X_j^p$ for $0\le j$. In particular, ${\rm gr}_{\nu^*}(S)$ is integral (even purely inseparable) over ${\rm gr}_{\nu}(R)$ but is not finite. We have that
  ${\rm gr}_{\nu^*}(S)^p\subset {\rm gr}_{\nu}(R)$. 
  
  It is shown in Section 7.11 of \cite{CP} that this property (of the extension of associated graded rings being integral but not finite) is stable under sequences of monoidal transforms 
  $$
  \begin{array}{cccc}
  R'&\rightarrow &S'&\subset V_{\nu^*}\\
  \uparrow&&\uparrow&\\
  R&\rightarrow& S&
  \end{array}
  $$
  above $R$ and $S$ which are dominated by $\nu^*$.
  
  \end{Example}

The conclusions of the following theorem should be compared with Examples \ref{Ex1} - \ref{Ex3}.

\begin{Theorem}\label{Theorem5*} Suppose that $K$ is an algebraic function field over an arbitrary field $k$ and $K^*$ is a finite extension of $K$. Suppose that $\nu^*$ is a rank one $k$-valuation of $K^*$.
 Let $\nu$ be the restriction of $\nu^*$ to $K$. 
 Then there exist a finite set of elements $f_1,\ldots,f_n\in V_{\nu}$ such that if 
$R$ is a local ring of $K$ which is dominated by $\nu$ which contains $f_1,\ldots,f_n$ and $S$  is the localization at the center of $\nu^*$ on the integral closure of $R$ in $K^*$, then
 ${\rm gr}_{\nu^*}(S)$ is integral over ${\rm gr}_{\nu}(R)$.
 \end{Theorem}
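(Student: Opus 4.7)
The strategy is to construct the list $f_1,\ldots,f_n$ explicitly, using the rank-one fundamental inequality together with a residue-and-value successive-approximation argument.  Since $\nu^*$ has rank one and $K^*/K$ is finite, both $e:=[\Gamma_{\nu^*}:\Gamma_\nu]$ and $f:=[V_{\nu^*}/m_{\nu^*}:V_\nu/m_\nu]$ are finite.  Choose $h_1,\ldots,h_e\in K^*\cap V_{\nu^*}$, integral over $V_\nu$, whose values $\nu^*(h_j)$ exhaust the cosets of $\Gamma_{\nu^*}/\Gamma_\nu$, and $g_1,\ldots,g_f\in V_{\nu^*}$ of the form $g_i=a_i/b_i$ with $a_i,b_i$ integral over $V_\nu$ and $\nu^*(b_i)=0$, so that $\bar g_1,\ldots,\bar g_f$ is a $V_\nu/m_\nu$-basis of $V_{\nu^*}/m_{\nu^*}$.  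The standard approximation --- given $s\in V_{\nu^*}$ with $\nu^*(s)=\gamma$, pick $j$ with $\nu^*(h_j)\equiv\gamma\pmod{\Gamma_\nu}$, $r\in V_\nu$ with $\nu(r)=\gamma-\nu^*(h_j)$, and expand $s/(rh_j)$ in the $\bar g_i$ --- shows that $\operatorname{gr}_{\nu^*}(V_{\nu^*})$ is generated as a $\operatorname{gr}_\nu(V_\nu)$-algebra by the images of the finitely many elements $h_j,a_i,b_i$.

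For each such generator $y$, set $\gamma_y=\nu^*(y)$, pick $n_y\le e$ with $n_y\gamma_y\in\Gamma_\nu$, and $r_y\in V_\nu$ with $\nu(r_y)=n_y\gamma_y$.  Then $z_y:=y^{n_y}/r_y\in V_{\nu^*}^\times$; its residue $\bar z_y$ is algebraic of degree $d_y\le f$ over $V_\nu/m_\nu$, killed by some monic $q_y\in (V_\nu/m_\nu)[X]$.  Lift $q_y$ to a monic $\tilde q_y\in V_\nu[X]$, so $\tilde q_y(z_y)\in m_{\nu^*}$; clearing denominators produces
\[
y^{n_yd_y}+\sum_{k=1}^{d_y}r_y^{\,k}\,\tilde c_{y,k}\,y^{n_y(d_y-k)}\in\mathcal P^{+}_{n_yd_y\gamma_y}(V_{\nu^*}),
\]
whose leading form is a monic homogeneous integral equation of degree $n_yd_y$ for the image of $y$ in $\operatorname{gr}_{\nu^*}(V_{\nu^*})$ over $\operatorname{gr}_\nu(V_\nu)$, with coefficients the images of the products $r_y^{\,k}\tilde c_{y,k}\in V_\nu$.

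Define $\{f_1,\ldots,f_n\}\subset V_\nu$ to be the union, over the finitely many $y$ above, of (i) the coefficients of a monic integral equation for $y$ over $V_\nu$, (ii) the element $r_y$, and (iii) the lifts $\tilde c_{y,k}$.  For any local ring $R$ of $K$ dominated by $\nu$ and containing this list, each $y$ is integral over $R$, so $y\in\mathcal O_R$; since $\nu^*(b_i)=0$ makes $b_i$ a unit of $S$, the $g_i$'s also lie in $S$.  The displayed equation, interpreted over $R$, is a monic homogeneous integral equation for the image of $y$ in $\operatorname{gr}_{\nu^*}(S)$ over $\operatorname{gr}_\nu(R)$.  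Consequently the subalgebra $A\subset\operatorname{gr}_{\nu^*}(S)$ generated over $\operatorname{gr}_\nu(R)$ by these images is integral over $\operatorname{gr}_\nu(R)$.

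To finish, one verifies $\operatorname{gr}_{\nu^*}(S)\subseteq A$, so that integrality of $A$ over $\operatorname{gr}_\nu(R)$ implies the same for $\operatorname{gr}_{\nu^*}(S)$.  For $s\in S$, write $s=p/q$ with $p,q\in\mathcal O_R$ and $\nu^*(q)=0$; the image of $q$ is a unit of $\operatorname{gr}_{\nu^*}(S)_0=S/m_S$, which is a finite algebraic extension of $R/m_R$, so its inverse is a polynomial in that image with coefficients in $R/m_R\subset A$.  For $p\in\mathcal O_R$, the successive-approximation argument of the first paragraph, applied to $p$, expresses the image of $p$ as a polynomial in the generator images with coefficients in $\operatorname{gr}_\nu(V_\nu)$; integrality of $p$ over $R$, together with the availability of $r_y$, $\tilde c_{y,k}$ and the residue-field lifts among the $f_i$'s, should force those coefficients into $\operatorname{gr}_\nu(R)$.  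The main obstacle is precisely this last point: one must argue uniformly in $R$ that every coefficient arising in the decomposition of the image of $p$ along the fixed generators is already in $\operatorname{gr}_\nu(R)$, which requires exploiting that $p$ is integral over $R$ (so its residues are algebraic over $R/m_R$) together with the finiteness of both $e$ and $f$, which bounds the value and residue translates that can appear.
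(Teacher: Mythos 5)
Your argument has two breaks, and the second one, which you flag yourself, is not a technical loose end but the entire content of the theorem. First, the claim that ${\rm gr}_{\nu^*}(V_{\nu^*})$ is generated as a ${\rm gr}_{\nu}(V_{\nu})$-algebra by the initial forms of the finitely many elements $h_j,a_i,b_i$ is false in general: your approximation requires $\nu^*(s)-\nu^*(h_j)\ge 0$, and when $\Gamma_{\nu}$ is not discrete and $e>1$ a nontrivial coset of $\Gamma_{\nu}$ in $\Gamma_{\nu^*}$ has no smallest positive element, so there are homogeneous elements whose degree is smaller than every $\nu^*(h_j)$ in its coset; such an initial form cannot lie in the subalgebra generated by the ${\rm in}_{\nu^*}(h_j)$ and ${\rm in}_{\nu^*}(g_i)$, since every homogeneous element of that subalgebra in the given coset has degree at least $\min\{\nu^*(h_j)\}$ over the coset. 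What is true at the level of valuation rings is only the quotient-field statement of Proposition \ref{PropQ3} (or, for immediate extensions, Lemma \ref{LemmaQ2}). Second, the inclusion ${\rm gr}_{\nu^*}(S)\subseteq A$ is precisely what must be proved, and your finite set cannot force it. Note that this inclusion is much stronger than integrality: it would place ${\rm gr}_{\nu^*}(S)$ inside a finite ${\rm gr}_{\nu}(R)$-module and in particular force $S^S(\nu^*)\subseteq S^R(\nu)+\sum\ZZ_{\ge 0}\nu^*(y)$ for the finitely many generators $y$, which is exactly the kind of finiteness that Examples \ref{Ex2} and \ref{Ex3} show one cannot expect (there integrality holds while finiteness fails, stably so in Example \ref{Ex3}). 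Worse, your list $f_1,\ldots,f_n$ carries no information about the value semigroup of $R$: in an immediate extension ($e=f=1$) your recipe produces only units of $V_{\nu^*}$ as generators and hence an essentially vacuous list, so your statement would assert integrality for every $R$ dominated by $\nu$; but Example \ref{Ex1} is immediate (its semigroups give $\Gamma_{\nu}=\Gamma_{\nu^*}$ and both residue fields equal $k$), and there no positive multiple of $\nu^*(x)$ lies in $S^R(\nu)$, so ${\rm in}_{\nu^*}(x)$ satisfies no monic equation over ${\rm gr}_{\nu}(R)$.

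The paper's proof works by an entirely different mechanism, and comparing with it shows what your $f_i$ would have to encode. The elements $f_1,\ldots,f_n$ are chosen by Lemma \ref{Lemma10} solely so that, for a Galois closure $K'$ and an extension $\nu'$ of $\nu^*$, the splitting group of the local rings equals $G^s(\nu'/\nu)$; this separates the centers of the different extensions of $\nu$ on the integral closure of $R$. Below the splitting field one then has $m_RU=m_U$ with equal residue fields, hence $\hat R=\hat U$ and ${\rm gr}_{\nu}(R)\cong{\rm gr}_{\nu'}(U)$; above the splitting field $\nu'$ is the unique extension, so for an arbitrary $z$ in the integral closure the elementary symmetric functions of its Galois conjugates all have value at least the expected multiple of $\nu'(z)$, and the initial form of the resulting monic equation is a monic equation for ${\rm in}_{\nu'}(z)$ over the graded ring of the base, with no auxiliary elements needed; the purely inseparable part is disposed of by taking $p^n$-th powers. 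In particular the integrality certificate is produced uniformly for every $z\in S$ from the Galois action, not from pre-chosen equations for finitely many elements; your data of integral-equation coefficients, $r_y$ and $\tilde c_{y,k}$ for fixed generators cannot supply the coefficients (initial forms of elements of $V_{\nu}$ of infinitely many different values) that arise when you expand a general ${\rm in}_{\nu^*}(z)$, which is the gap you acknowledge in your last paragraph.
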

 
 Theorem \ref{Theorem5*} is proven in Section \ref{SecA}.

The conclusions of the following theorem should be compared with Example \ref{Ex3}. Example \ref{Ex3} is a tower of two immediate Artin-Schreier extensions and $\nu^*$ is the unique extension of $\nu$,
so the following theorem ensures that ${\rm gr}_{\nu^*}(S)^{p^2}\subset {\rm gr}_{\nu}(R)$ in Example \ref{Ex3}.

\begin{Theorem}\label{Theoreminsep*} Suppose that $K$ and $K^*$ are fields, $\nu^*$ is a valuation of $K^*$ with restriction $\nu$ to $K$, $K^*$ is Galois over $K$, $\nu^*$ has rank 1, $\nu^*$ is the unique extension of $\nu$ to $K^*$,
$[\Gamma_{\nu^*}:\Gamma_{\nu}]$ is a power of $p$ where $p$ is the residue characteristic of $V_{\nu*}$ and $V_{\nu^*}/m_{\nu^*}$ is purely inseparable over $[V_{\nu}/m_{\nu}]$
(so $[K^*:K]=p^n$ for some $n$). Suppose $R$ is a normal local ring of $K$ which is dominated by $\nu$ and $S$ is the localization of the integral closure of $R$ in $K^*$ at the center of $\nu^*$. Then ${\rm gr}_{\nu^*}(S)^{p^n}\subset {\rm gr}_{\nu}(R)$.
\end{Theorem}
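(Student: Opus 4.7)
The plan is to reduce the statement to the assertion that $G=\mathrm{Gal}(K^*/K)$ acts trivially on $\mathrm{gr}_{\nu^*}(V_{\nu^*})$, and then combine this with the field norm and Frobenius in characteristic $p$. As a preliminary, the hypotheses together with the standard defect formula for a unique extension give $[K^*:K]=ef\,p^{\delta(\nu^*/\nu)}=p^n$, so $G$ is a finite $p$-group.

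The key technical step, which I expect to be the main obstacle, is the lemma that for every $\sigma\in G$ and every nonzero $x\in V_{\nu^*}$, $\nu^*(\sigma(x)-x)>\nu^*(x)$; equivalently, $\sigma$ acts as the identity on $\mathrm{gr}_{\nu^*}(V_{\nu^*})$. Because $\nu^*$ is the unique extension, $\nu^*\circ\sigma=\nu^*$, so $\sigma$ preserves each $\mathcal P_\gamma(V_{\nu^*})$ and induces a graded ring automorphism $\sigma^*$. The induced action on the residue field $V_{\nu^*}/m_{\nu^*}$ is the identity, because a purely inseparable extension has no nontrivial automorphisms. Each graded piece $\mathcal P_\gamma/\mathcal P_\gamma^+$ is one-dimensional over $V_{\nu^*}/m_{\nu^*}$, so $\sigma^*$ acts on it by a scalar $c_\sigma(\gamma)$ in the residue field; since $\sigma$ has $p$-power order $p^m$ and the residue field has characteristic $p$, the relation $c_\sigma(\gamma)^{p^m}=1$ forces $c_\sigma(\gamma)=1$. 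The subtle points are checking that the scalar is well-defined on each one-dimensional piece and that this characteristic-$p$ collapse really kills the action uniformly in $\gamma$.

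With the lemma in hand, for each $x\in S$ with $\gamma=\nu^*(x)$ I would expand
\[
N_{K^*/K}(x)=\prod_{\sigma\in G}\bigl(x+(\sigma(x)-x)\bigr)=x^{p^n}+\sum_{\emptyset\ne I\subset G}x^{p^n-|I|}\prod_{\sigma\in I}(\sigma(x)-x).
\]
Terms with $\mathrm{id}\in I$ vanish, and the lemma shows that every surviving term has $\nu^*>p^n\gamma$, so $\mathrm{in}_{\nu^*}(x)^{p^n}=\mathrm{in}_{\nu^*}(N_{K^*/K}(x))$ in $\mathrm{gr}_{\nu^*}(S)$. Writing $x=a/b$ with $a\in\bar R$ (the integral closure of $R$ in $K^*$) and $\nu^*(b)=0$, one gets $N(x)=N(a)/N(b)$ with $N(a)\in R$ by normality and $N(b)$ a unit of $R$ (since $\nu(N(b))=0$ and $R$ is dominated by $\nu$), so $N(x)\in R$. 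Therefore $\mathrm{in}_{\nu^*}(x)^{p^n}$ lies in the image of $\mathrm{gr}_\nu(R)\to\mathrm{gr}_{\nu^*}(S)$.

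Finally, since $V_{\nu^*}/m_{\nu^*}$ has characteristic $p$, so does $\mathrm{gr}_{\nu^*}(S)$, and the Frobenius map $a\mapsto a^{p^n}$ is additive on it. Every element of $\mathrm{gr}_{\nu^*}(S)$ is a finite sum of initial forms of elements of $S$, whose $p^n$-th powers all lie in $\mathrm{gr}_\nu(R)$ by the previous step; summing gives $\mathrm{gr}_{\nu^*}(S)^{p^n}\subset\mathrm{gr}_\nu(R)$.
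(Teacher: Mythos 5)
Your proposal is correct, and its skeleton coincides with the paper's: reduce to homogeneous elements using the Frobenius (characteristic $p$ of ${\rm gr}_{\nu^*}(S)$), establish the key inequality $\nu^*(\sigma(x)-x)>\nu^*(x)$ for all $\sigma\in G(K^*/K)$ and $x\ne 0$, and then observe that the product of the conjugates of $z$ has initial form equal to ${\rm in}_{\nu^*}(z)^{p^n}$ and lies in $R$. The genuine difference is how the key inequality is obtained: the paper quotes Zariski--Samuel to identify $G(K^*/K)$ with the ramification group $G^r(\nu^*/\nu)$ under the hypotheses (unique extension, $e$ a $p$-power, purely inseparable residue extension), and reads off the inequality from the definition of $G^r$, whereas you reprove it directly: uniqueness of $\nu^*$ gives $\nu^*\circ\sigma=\nu^*$, pure inseparability forces the induced action on $V_{\nu^*}/m_{\nu^*}$ to be trivial, so on each one-dimensional graded piece $\mathcal P_{\gamma}(V_{\nu^*})/\mathcal P_{\gamma}^+(V_{\nu^*})$ the induced map is multiplication by a scalar whose $p^m$-th power is $1$, hence is $1$ in characteristic $p$. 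This is in substance the classical argument behind the cited facts ($G^i/G^r$ embeds in a character group with values in the residue field's multiplicative group, which has no $p$-torsion), so you have made the proof self-contained at the cost of a page of ramification theory. Two further minor points in your favour: using the norm expansion $\prod_{\sigma}(x+(\sigma(x)-x))$ is an equivalent repackaging of the paper's estimate $\nu^*(S_i(z))>i\nu^*(z)$ for $0<i<p^n$ followed by the relation $z^{p^n}+a_1z^{p^n-1}+\cdots+a_{p^n}=0$; and your verification that $N_{K^*/K}(x)\in R$ (writing $x=a/b$ with $a,b$ integral over $R$, $\nu^*(b)=0$, so $N(a)\in R$ by normality and $N(b)$ a unit since $\nu$ dominates $R$) is more careful than the paper's bare assertion that the symmetric functions lie in $K\cap S=R$.
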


Theorem \ref{Theoreminsep*} is proven in Section \ref{SecA}.

The classical Abhyankar Jung Theorem \cite {J}, \cite{RAF}, tells us that if $R$ is regular of equicharacteritic zero and the discriminant of $R$ in $K^*$ is a simple normal crossing divisor, then $S$ has only Abelian quotient singularities. A few references on this topic and related problems are \cite{GM}, \cite{Lu}, \cite{PG} and \cite{V}.

Our most difficult result is the following generalization of the Abhyankar Jung Theorem to associated graded rings of valuations.
Theorem \ref{Theorem1*} is proven in Section \ref{SecB}.

\begin{Theorem}\label{Theorem1*} Suppose that $K$ is an algebraic function field over an algebraically closed field $k$ of characteristic zero and $K^*$ is a finite extension of $K$. Suppose that $\nu^*$ is a rank one $k$-valuation of $K^*$
whose residue field is $k$. Let $\nu$ be the restriction of $\nu^*$ to $K$, and suppose that $R'$ is an algebraic local ring of $K$ which is dominated by $\nu$. Then there exists a sequence of monoidal transforms $R'\rightarrow R$ along $\nu$ such that $R$ is regular, and if $S$ is the local ring of the center of $\nu^*$ on the integral closure of $R$ in $K^*$, then 
\begin{enumerate}
\item[1)] ${\rm gr}_{\nu^*(S)}$ is a free ${\rm gr}_{\nu}(R)$-module of finite rank $e=[\Gamma_{\nu^*}/\Gamma_{\nu}]$.
\item[2)] $\Gamma^*/\Gamma$ acts on ${\rm gr}_{\nu^*}(S)$ with ${\rm gr}_{\nu^*}(S)^{\Gamma_{\nu^*}/\Gamma_{\nu}}\cong {\rm gr}_{\nu}(R)$.
\end{enumerate} 
\end{Theorem}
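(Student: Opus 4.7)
The plan is to combine the classical Abhyankar--Jung theorem with a monomialization of the valuation along the extension, reducing the problem to an explicit Kummer/toric computation at the level of initial forms.

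First I would use Zariski's local uniformization in characteristic zero, together with embedded resolution of singularities and results on monomialization of an extension along a valuation, to find a sequence of monoidal transforms $R' \to R$ achieving the following normal form: $R$ is regular with regular parameters $x_1,\ldots,x_n$; the discriminant divisor of the normalization of $R$ in $K^*$ is the SNC divisor $(x_1 \cdots x_r)$; and the localization $S$ of that normalization at the center of $\nu^*$ is regular with regular parameters $y_1,\ldots,y_r,x_{r+1},\ldots,x_n$ satisfying Kummer relations $y_i^{d_i} = u_i x_i$ with $u_i \in R^\times$. Since $V_{\nu^*}/m_{\nu^*} = V_{\nu}/m_{\nu} = k$ and the defect of $\nu^*/\nu$ vanishes in characteristic zero, Proposition~\ref{PropQ3} gives $e = [\Gamma_{\nu^*}:\Gamma_{\nu}] = ef = [K^*:K] = d_1 d_2 \cdots d_r$.

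Next I take initial forms. The Kummer relations become $Y_i^{d_i} = c_i X_i$ with $c_i \in k^\times$. The crucial observation is that the induced map $\prod_{i=1}^r \ZZ/d_i \to \Gamma_{\nu^*}/\Gamma_{\nu}$ sending $(a_1,\ldots,a_r) \mapsto \sum a_i \nu^*(y_i)$ is injective: its image has at most $\prod d_i = e$ elements, and the target has exactly $e$. Hence in the expansion $s = \sum_a r_a y^a$ of $s \in S$ (with $r_a \in R$ and $0 \le a_i < d_i$), the values $\nu(r_a) + \sum a_i \nu^*(y_i)$ lie in distinct cosets of $\Gamma_{\nu}$ for distinct $a$, so no cancellation of initial forms can occur. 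This identifies
\[
{\rm gr}_{\nu^*}(S) \;\iso\; {\rm gr}_{\nu}(R)[Y_1,\ldots,Y_r]/(Y_i^{d_i} - c_i X_i),
\]
which is visibly a free ${\rm gr}_{\nu}(R)$--module with basis $\{Y_1^{a_1} \cdots Y_r^{a_r} : 0 \le a_i < d_i\}$, of rank $e$. This proves (1).

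For (2), the finite abelian group $G = \Gamma_{\nu^*}/\Gamma_{\nu}$ admits a perfect duality with a group of roots of unity in $k^\times$ (since $k$ is algebraically closed of characteristic zero). Each character $\chi \in \mathrm{Hom}(G, k^\times)$ acts on ${\rm gr}_{\nu^*}(S)$ by sending a homogeneous element $f$ of degree $\gamma$ to $\chi(\overline{\gamma})f$; via the canonical double-dual isomorphism this furnishes the required action of $G$ itself. The invariants are precisely the $\Gamma_{\nu}$-graded part, which the above isomorphism identifies with ${\rm gr}_{\nu}(R)$. The main obstacle will be the first step, the coordinated normal form: blowing up along $\nu^*$ so that $R$ and $S$ become regular, the discriminant becomes SNC, and the Kummer description holds in a way compatible with the grading by $\nu$ combines several deep tools (local uniformization, embedded resolution, and monomialization of morphisms) whose interaction is delicate. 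Once this normal form is in hand, the remainder of the proof is essentially a formal toric/Kummer calculation.
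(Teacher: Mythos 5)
Your reduction has a genuine gap at the first step, and the gap is not merely technical. You assume that after blowing up one can arrange $R$ regular, the discriminant SNC, \emph{and} $S$ regular with split Kummer relations $y_i^{d_i}=u_ix_i$, $u_i\in R^{\times}$. But $S$ is not a ring you get to choose: it is the localization of the integral closure of $R$ in $K^*$ at the center of $\nu^*$, and even with an SNC discriminant it is in general only an abelian quotient (toric) singularity, not regular and not a tensor product of cyclic Kummer extensions along the coordinate divisors (already $z^2=x_1x_2$ shows this, and the phenomenon does not disappear under monoidal transforms along the valuation). This is exactly why the paper's Theorem \ref{Theorem1} keeps two different pairs of rings: a regular $S$ monomial over a (merely normal) $R$, and the possibly singular $T$ over the regular $R_0$, for which the structure result is not a Kummer presentation but $\hat T=\bigoplus_{i=1}^e w_i\hat R_0$, where the $w_i$ are the lattice monomials in the fundamental parallelepiped of the monoid generated by the rows of the stable matrix $A$ (Proposition \ref{SG1}, Lemma \ref{SG2}), obtained after running the Abhyankar--Jung/purity argument on the Galois closure. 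Your equality $e=ef=[K^*:K]=d_1\cdots d_r$ is also incorrect in general: $ef\le [K^*:K]$ with equality only when $\nu^*$ is the unique extension of $\nu$, and Proposition \ref{PropQ3} only gives the degree $ef$ of the extension of graded quotient fields; the correct local degree $[{\rm QF}(\hat T):{\rm QF}(\hat R_0)]=ef$ requires the splitting-group bookkeeping of the paper.

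More importantly, even if your Kummer normal form did hold, the conclusion would not follow: Example \ref{Ex2} is precisely of your form ($x_1=y_1^2$, $x_2=y_2^2$, SNC discriminant, $k$ of any characteristic including zero), yet ${\rm gr}_{\nu^*}(S)$ is not even finite over ${\rm gr}_{\nu}(R)$. The failure occurs at the step you wave through: injectivity of $(a_1,\ldots,a_r)\mapsto \sum a_i\nu^*(y_i)$ into $\Gamma_{\nu^*}/\Gamma_{\nu}$ does not follow from your cardinality count (which presupposes $\prod d_i=e$), and when it fails the initial forms do cancel. The entire content of the theorem is to blow up until the monomial data is matched to the value groups --- in the paper this is the identity $\ZZ^s/A^t\ZZ^s\cong\Gamma_{\nu^*}/\Gamma_{\nu}$ of (\ref{eqA11}) with $s$ the rational rank, which is what makes the values $\hat\nu^*(w_i)$ a complete set of coset representatives (Lemma \ref{LemmaA22}) and hence rules out cancellation. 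In addition, since $\nu^*$ has rank one, the comparison between ${\rm gr}$ of the rings and of their completions is delicate: one must control the primes $P(\hat R_0)_{\infty}$, $P(\hat T)_{\infty}$ of infinite value (via the results of \cite{CG} built into Theorem \ref{Theorem1}) before Lemmas \ref{Lemma12} and \ref{LemmaA21} let you compute ${\rm gr}_{\nu}(R_0)$ and ${\rm gr}_{\nu^*}(T)$ from $\hat R_0$ and $\hat T$; your expansion $s=\sum_a r_ay^a$ silently works in the completion and ignores this issue. Your treatment of part 2) by characters dual to the grading would be acceptable once 1) is established with a presentation of the type you describe, but as it stands both the normal form and the no-cancellation step need the machinery (strong monomialization tied to $\Gamma_{\nu^*}/\Gamma_{\nu}$, the toric integral closure computation, and the completion control) that constitutes the actual proof.
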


Theorem \ref{Theorem1*} is proven in dimension 2 (and rational rank 1) by Ghezzi, Ha and Kashcheyeva \cite{GHK} and the conclusions of 1) of the theorem are established by Ghezzi and Kascheyeva \cite{GK} for two dimensional defectless extensions of positive characteristic algebraic function fields. 
The conclusions of 1) of the theorem are established for excellent local domains $R'$ of dimension two under a defectless extension in \cite{C2}.
Thus it is reasonable to ask if the conclusions of 1) of the theorem are true for defectless extensions (assuming resolution of singularities is true). 

The assumption that $\nu$ has rank 1 is used in Theorem 1 to avoid some problems with extensions of valuations to the completion of an analytically irreducible local ring. The theorem could be true for arbitrary rank valuations. 

Examples \ref{Ex1} and \ref{Ex2} satisfy the classical discriminant condition of the Abhyankar Jung Theorem, but do not satisfy the conclusions of the theorem.
However, after some blowing up along the valuation,
$$
\begin{array}{cccc}
R_1&\rightarrow& S_1&\subset V_{\nu^*}\\
\uparrow&&\uparrow\\
R&\rightarrow&S
\end{array}
$$
$R_1\rightarrow S_1$ must satisfy the conclusions of the theorem (if the characteristic of the ground field $k$ is zero).

The characteristic $p>0$ example Example \ref{Ex3} is much worse. Finiteness of the extension of graded rings never holds after blowing up.

The key point  in the proof of Theorem \ref{Theorem1*} is to find $R\rightarrow S$ such the $\hat S=\bigoplus_{i=1}^e w_i\hat R$ where $\{\nu^*(w_i)\}$ is a complete set of representatives of the cosets of $\Gamma_{\nu}$ in $\Gamma_{\nu^*}$.

In general, $\hat S$ is not a free $\hat R$-module if $R$ is regular and $\dim R>2$, although the discriminant condition of the classical Abhyankar-Jung Theorem ensures this. 

The proof of Theorem \ref{Theorem1*} uses the local monomialization theorem \cite{C}, an extension in \cite{CG} giving  nice extensions of the valuations to the completions of the local rings and the classical Abhyankar Jung Theorem \cite{RAF}.

\section{Notation and Preliminaries}\label{SecNoc}
\subsection{Local algebra.}  All rings will be commutative with identity. A ring $S$ is essentially of finite type over $R$ if $S$ is a local ring of a finitely generated $R$-algebra.
We will denote the maximal ideal of a local ring  $R$ by $m_R$, and the quotient field of a domain $R$ by $\mbox{QF}(R)$. 
(We do not require that a local ring be Noetherian). 
Suppose that $R\subset S$ is an inclusion of local rings. We will say that $S$ dominates $R$ if $m_S\cap R=m_R$.
If the local ring $R$ is a domain with $\mbox{QF}(R)=K$ then we will say that $R$ is a local ring of $K$. If $K$ is an algebraic function field over a field $k$ (which we do not assume to be algebraically closed) and a local ring $R$ of $K$ is essentially of finite type over $k$, then we say that $R$ is an algebraic local ring of $k$.  

Suppose that $K\rightarrow K^*$ is a finite field extension, $R$ is a local ring of $K$ and $S$ is a local ring of $K^*$. We will say that $S$ lies over $R$ if $S$ is a localization of the integral closure $T$ of $R$ in $K^*$. If $R$ is a local ring, $\hat R$ will denote the completion of $R$ by its maximal ideal $m_R$.

Suppose that $R$ is a regular local ring. A monoidal transform $R\rightarrow R_1$ of $R$ is a local ring of the form $R[\frac{P}{x}]_m$
where $P$ is a regular prime ideal in $R$ ($R/P$ is a regular local ring) and $m$ is a prime ideal of $R[\frac{P}{x}]$ such that
$m\cap R=m_R$. $R_1$ is called a quadratic transform if $P=m_R$.

\subsection{Valuation Theory}
Suppose that $\nu$ is a valuation on a field $K$. We will denote by $V_{\nu}$ the valuation ring of $\nu$:
$$
V_{\nu}=\{f\in K\mid \nu(f)\ge 0\}.
$$
We will denote the value group of $\nu$ by $\Gamma_{\nu}$. Good treatments of valuation theory are Chapter VI of \cite{ZS2} and \cite{RTM}, which contain references to the original papers.
If $\nu$ is a valuation ring of an algebraic function field over a field $k$, we  insist that $\nu$ vanishes on $k\setminus\{0\}$,
and say that $\nu$ is a $k$-valuation.

If $\nu$ is a valuation of a field $K$ and $R$ is a local ring of $K$ we will say that $\nu$ dominates $R$ if the valuation ring
$V_{\nu}$ dominates $R$. Suppose that $\nu$ dominates $R$. A monoidal transform $R\rightarrow R_1$ is called a monoidal transform along $\nu$ if $\nu$ dominates $R_1$. If $A$ is a subring of $V_{\nu}$, then the center of $\nu$ on $A$ is the prime ideal $m_{\nu}\cap A$. 

Suppose that $K^*/K$ is a finite  extension, $\nu^*$ is a valuation of $K^*$ and $\nu$ is the restriction of $\nu$ to $K$.
We define
$$
e=e(\nu^*/\nu)=|\Gamma_{\nu^*}/\Gamma_{\nu}|
$$
and 
$$
f=f(\nu^*/\nu)=[V_{\nu^*}/m_{\nu^*}:V_{\nu}/m_{\nu}].
$$
$\nu^*/\nu$ is an immediate extension if $e=f=1$.
The defect $\delta(\nu^*/\nu)$ is defined and its basic properties are developed  in  Section 11, Chapter VI \cite{ZS2},
\cite{Ku1}, \cite{Ku2} and
Section 7.1 of \cite{CP}.  If $\nu^*$ is the unique extension of $\nu$ to $K^*$ and $p>0$ is the residue characteristic of $V_{\nu^*}$, then 
$$
[K^*:K]=e(\nu^*/\nu)f(\nu^*/\nu)p^{\delta(\nu^*/\nu)}.
$$

\subsection{Galois theory of local rings} Suppose that $K^*/K$ is a finite Galois extension, $R$ is a normal local ring of $K$ and $S$ is a normal local ring of $K^*$ which lies over $R$. We will denote the Galois group of $K^*/K$ by $G(K^*/K)$. The splitting group $G^s(S/R)$, splitting field $K^s(S/R)=(K^*)^{G^s(S/R)}$  and inertia group $G^i(S/R)$, inertia field $K^i(S/R)=(K^*)^{G^i(S/R)}$   are defined and their basic properties developed in Section 7 of \cite{RTM}. 

\subsection{Galois theory of valuations}
The Galois theory of valuation rings is developed in Section 12 of Chapter VI of \cite{ZS2} and in Section 7 of \cite{RTM}.
Some of the basic results we need are surveyed in Section 7.1 \cite{CP}.
If we take $S=V_{\nu^*}$ and 
$R=V_{\nu}$ where $\nu^*$ is a valuation of $K^*$ and $\nu$ is the restriction of $\nu$ to $K$, then we obtain the splitting group
$G^s(\nu^*/\nu)$, the splitting field $K^s(\nu^*/\nu)$ and the inertia group $G^i(\nu^*/\nu)$, inertia field $K^i(\nu^*/\nu)=(K^*)^{G^i(\nu^*/\nu)}$. 
In Section 12 of Chapter VI of \cite{ZS2}, $G^s(\nu^*/\nu)$ is written as $G_Z$ and called the decomposition group.
$G^i(\nu^*/\nu)$ is written as $G_T$. The ramification group $G_V$ of $\nu^*/\nu$ is  defined in Section 12 of Chapter VI of \cite{ZS2} and is surveyed in Section 7.1 \cite{CP}. We  will denote this group by $G^r(\nu^*/\nu)$.  
\subsection{Semigroups and associated graded rings of a local ring with respect to a valuation}
Suppose that $\nu$ is a valuation of field $K$ which dominates a local ring $R$ of $K$. We will denote the semigroup of values of $\nu$ on $S$ by
$$
S^R(\nu)=\{\nu(f)\mid f\in R\setminus \{0\}\}.
$$
Suppose that $\gamma\in \Gamma_{\nu}$. We define ideals in $R$
$$
\mathcal P_{\gamma}(R)=\{f\in R\mid \nu(f)\ge 0\}
$$
and
$$
\mathcal P_{\gamma}^+(R)=\{f\in R\mid \nu(f)> 0\}
$$
and define (as in \cite{T}) the associated graded ring of $R$ with respect to $\nu$ by
$$
{\rm gr}_{\nu}(R):=\bigoplus_{\gamma\in \Gamma_{\nu}}\mathcal P_{\gamma}(R)/\mathcal P_{\gamma}^+(R)=\bigoplus_{\gamma\in S^R(\nu)}\mathcal P_{\gamma}(R)/\mathcal P_{\gamma}^+(R).
$$

\section{Ramification of associated graded rings of valuations}\label{SecA}

\begin{Lemma}\label{LemmaQ1} Suppose that $R$ is a (not necessarilly Noetherian) local ring with quotient field $K$ which is dominated by a valuation $\nu$ of $K$. Then
$$
{\rm QF}({\rm gr}_{\nu}(R))={\rm QF}({\rm gr}_{\nu}(V_{\nu})).
$$
\end{Lemma}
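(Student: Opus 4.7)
The plan is to establish the equality by showing the two inclusions. First, I would observe that since $\nu$ dominates $R$ we have $R \subset V_{\nu}$, and for each $\gamma\in\Gamma_\nu$ one has $\mathcal P_\gamma(R)=R\cap\mathcal P_\gamma(V_\nu)$ and $\mathcal P_\gamma^+(R)=R\cap\mathcal P_\gamma^+(V_\nu)$. Hence the natural map on graded pieces is injective, giving an inclusion of graded rings
\[
{\rm gr}_{\nu}(R)\hookrightarrow {\rm gr}_{\nu}(V_{\nu}).
\]
Since ${\rm gr}_{\nu}(V_{\nu})$ is a domain (the associated graded of a valuation ring along its own valuation is a domain, as the product of initial forms is the initial form of the product), so is ${\rm gr}_{\nu}(R)$, and we obtain the inclusion ${\rm QF}({\rm gr}_{\nu}(R))\subseteq {\rm QF}({\rm gr}_{\nu}(V_{\nu}))$.

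For the reverse inclusion, the key point is that $K={\rm QF}(R)$. Take any nonzero homogeneous element $\bar f\in\mathcal P_\gamma(V_\nu)/\mathcal P_\gamma^+(V_\nu)$, represented by $f\in V_{\nu}$ with $\nu(f)=\gamma$. Write $f=a/b$ with $a,b\in R\setminus\{0\}$. Set $\alpha=\nu(a)$ and $\beta=\nu(b)$, so that $\alpha-\beta=\gamma$. The initial forms $[a]_\alpha$ and $[b]_\beta$ lie in ${\rm gr}_{\nu}(R)$, and from $a=bf$ and additivity of $\nu$ one gets $[a]_\alpha=[b]_\beta\cdot[f]_\gamma$ in ${\rm gr}_{\nu}(V_{\nu})$. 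Therefore
\[
[f]_\gamma=\frac{[a]_\alpha}{[b]_\beta}\in {\rm QF}({\rm gr}_{\nu}(R)).
\]

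To finish, I would note that every element of ${\rm gr}_{\nu}(V_{\nu})$ is a finite sum of such homogeneous elements, so it lies in ${\rm QF}({\rm gr}_{\nu}(R))$. Passing to ratios gives ${\rm QF}({\rm gr}_{\nu}(V_{\nu}))\subseteq {\rm QF}({\rm gr}_{\nu}(R))$, completing the proof. There is no real obstacle here: the only subtle verification is that the graded rings are domains and that the inclusion on graded pieces is strict on representatives in the sense needed for the factorization $[a]_\alpha=[b]_\beta[f]_\gamma$, both of which follow immediately from the valuative nature of the filtration.
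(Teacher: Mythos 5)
Your proof is correct and follows essentially the same route as the paper: the graded inclusion via $\mathcal P_{\gamma}(V_{\nu})\cap R=\mathcal P_{\gamma}(R)$, then writing a homogeneous element of ${\rm gr}_{\nu}(V_{\nu})$ as ${\rm in}_{\nu}(a)$ with $a=b/c$, $b,c\in R$, and using ${\rm in}_{\nu}(c)\,{\rm in}_{\nu}(a)={\rm in}_{\nu}(b)$ to conclude via sums of homogeneous elements. The remark that the graded rings are domains is a harmless addition the paper leaves implicit.
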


\begin{proof} For $\gamma\in \Gamma_{\nu}$, $\mathcal P_{\gamma}(V_{\nu})\cap R=\mathcal P_{\gamma}(R)$ and $\mathcal P_{\gamma}^+(V_{\nu})\cap R=\mathcal P_{\gamma}^+(R)$, so we have a natural graded inclusion ${\rm gr}_{\nu}(R)\subset {\rm gr}_{\nu}(V_{\nu})$. 

Suppose $f\in {\rm gr}_{\nu}(V_{\nu})$ is homogeneous, so $f={\rm in}_{\nu}(a)$ for some $a\in V_{\nu}$. Write $a=\frac{b}{c}$ with $b,c\in R$. Since $ca=b$, we have that ${\rm in}_{\nu}(c){\rm in}_{\nu}(a)={\rm in }_{\nu}(b)$, so 
$$
f= \frac{{\rm in }_{\nu}(b)}{{\rm in}_{\nu}(c)}\in {\rm QF}({\rm gr}_{\nu}(R)).
$$
Since every element of ${\rm gr}_{\nu}(V_{\nu})$ is a finite sum of homogeneous elements, we have that ${\rm gr}_{\nu}(V_{\nu})\subset {\rm QF}({\rm gr}_{\nu}(R))$, and the conclusion of the lemma follows.
\end{proof}

\begin{Lemma}\label{LemmaQ2} Suppose that $K\rightarrow K^*$ is a field extension, $\nu^*$ is a valuation of $K^*$ and $\nu|K$ is its restriction to $K$. Suppose that the extension is immediate. Then
$$
{\rm gr}_{\nu}(V_{\nu})={\rm gr}_{\nu^*}(V_{\nu^*}).
$$
\end{Lemma}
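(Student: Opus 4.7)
The plan is to use the fact that an immediate extension has $\Gamma_{\nu^*}=\Gamma_{\nu}$ and $V_{\nu^*}/m_{\nu^*}=V_{\nu}/m_{\nu}$, so that every homogeneous element of ${\rm gr}_{\nu^*}(V_{\nu^*})$ can be represented by an element of $V_{\nu}$.

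First I would observe that, because the grading is by the (common) value group $\Gamma=\Gamma_{\nu}=\Gamma_{\nu^*}$ and because $\mathcal{P}_{\gamma}(V_{\nu})=\mathcal{P}_{\gamma}(V_{\nu^*})\cap V_{\nu}$ (and similarly for $\mathcal{P}_{\gamma}^+$), the inclusion $V_{\nu}\subset V_{\nu^*}$ induces an injective graded homomorphism ${\rm gr}_{\nu}(V_{\nu})\hookrightarrow {\rm gr}_{\nu^*}(V_{\nu^*})$. This gives the nontrivial direction of the containment for free.

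For the reverse inclusion, it suffices to show every homogeneous element of degree $\gamma$ is hit. Pick $a\in V_{\nu^*}$ with $\nu^*(a)=\gamma$. Since $\Gamma_{\nu}=\Gamma_{\nu^*}$, I can pick some $c\in V_{\nu}$ with $\nu(c)=\gamma$; then $a/c$ is a unit in $V_{\nu^*}$. Since the residue extension is trivial, the image of $a/c$ in $V_{\nu^*}/m_{\nu^*}$ equals the image of some unit $u\in V_{\nu}$, i.e., $\nu^*(a/c-u)>0$. Multiplying by $c$ gives $\nu^*(a-uc)>\gamma$, while $uc\in V_{\nu}$ with $\nu(uc)=\gamma$. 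Hence
\[
{\rm in}_{\nu^*}(a)={\rm in}_{\nu^*}(uc)={\rm in}_{\nu}(uc)\in {\rm gr}_{\nu}(V_{\nu}).
\]

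Since ${\rm gr}_{\nu^*}(V_{\nu^*})$ is generated (as an abelian group) by its homogeneous elements, this shows ${\rm gr}_{\nu^*}(V_{\nu^*})\subseteq {\rm gr}_{\nu}(V_{\nu})$, and together with the opposite inclusion the lemma follows. The only subtlety is the standard observation that the ``lift from the residue field'' argument really gives an element in $V_{\nu}$, which is exactly the content of $f(\nu^*/\nu)=1$; the rest is formal.
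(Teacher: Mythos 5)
Your proof is correct and follows essentially the same route as the paper: one inclusion comes from intersecting the value ideals with $K$, and the reverse inclusion uses $\Gamma_{\nu}=\Gamma_{\nu^*}$ to pick $c\in V_{\nu}$ of the same value and triviality of the residue extension to pick a unit $u\in V_{\nu}$ with the same residue as $a/c$, so that ${\rm in}_{\nu^*}(a)={\rm in}_{\nu}(uc)$. No issues to flag.
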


\begin{proof} We have that ${\mathcal P}_{\gamma}(V_{\nu^*})\cap K=\mathcal P_{\gamma}(V_{\nu})$ and $\mathcal P_{\gamma}^+(V_{\nu^*})\cap K=\mathcal P_{\gamma}^+(V_{\nu})$ so we have a natural graded inclusion ${\rm gr}_{\nu}(V_{\nu}) \subset {\rm gr}_{\nu^*}(V_{\nu^*})$.
Since $K^*/K$ is immediate, we have $\Gamma_{\nu^*}=\Gamma_{\nu}$ and 
$$
\mathcal P_0(V_{\nu^*})/\mathcal P_0^+(V_{\nu^*})=V_{\nu^*}/m_{\nu^*}=V_{\nu}/m_{\nu}=\mathcal P_0(V_{\nu})/\mathcal P_0^+(V_{\nu}).
$$
Suppose $f\in {\rm gr}_{\nu^*}(V_{\nu^*})$. We will show that $f\in {\rm gr}_{\nu}(V_{\nu})$. We may assume that $f$ is homogeneous, so $f={\rm in}_{\nu^*}(a)$ for some $a\in V_{\nu^*}$. There exists $b\in V_{\nu}$ such that $\nu(b)=\nu^*(a)$. Thus
$$
{\rm in}_{\nu^*}(\frac{a}{b})\in V_{\nu^*}/m_{\nu^*}=V_{\nu}/m_{\nu}
$$
so there exists $c\in V_{\nu}$ such that
$$
{\rm in}_{\nu}(c)={\rm in}_{\nu^*}(\frac{a}{b})=\rm{in}_{\nu^*}(a)-{\rm in}_{\nu}(b).
$$
Thus
$$
f={\rm in}_{\nu^*}(a)={\rm in }_{\nu}(b)+{\rm in}_{\nu}(c)\in{\rm gr}_{\nu}(V_{\nu}).
$$

\end{proof}

\begin{Proposition}\label{PropQ3}  Suppose that $K\rightarrow K^*$ is a finite field extension, $\nu^*$ is a valuation of $K^*$ and $\nu|K$ is its restriction to $K$. Then
$$
[{\rm QF}({\rm gr}_{\nu^*}(V_{\nu^*})):{\rm QF}({\rm gr}_{\nu}(V_{\nu}))]=ef.
$$ 
\end{Proposition}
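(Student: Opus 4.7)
The plan is to localize both graded rings at their nonzero homogeneous elements, reducing to a rank computation over a ``graded field''. For any graded integral domain $A$, inverting all nonzero homogeneous elements produces a graded ring whose full quotient field equals ${\rm QF}(A)$. Both ${\rm gr}_\nu(V_\nu)$ and ${\rm gr}_{\nu^*}(V_{\nu^*})$ are graded integral domains (for a valuation, ${\rm in}(a){\rm in}(b) = {\rm in}(ab)$ never vanishes when both factors are nonzero homogeneous), and the argument of Lemma \ref{LemmaQ1} identifies their homogeneous localizations with
$$G_\nu := \bigoplus_{\gamma \in \Gamma_\nu}\mathcal{P}_\gamma(K)/\mathcal{P}_\gamma^+(K), \qquad G_{\nu^*} := \bigoplus_{\gamma \in \Gamma_{\nu^*}}\mathcal{P}_\gamma(K^*)/\mathcal{P}_\gamma^+(K^*),$$
in which each graded piece is one-dimensional over the appropriate residue field. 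It therefore suffices to prove $[{\rm QF}(G_{\nu^*}) : {\rm QF}(G_\nu)] = ef$.

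Choose coset representatives $\delta_1, \ldots, \delta_e \in \Gamma_{\nu^*}$ of the order-$e$ quotient $\Gamma_{\nu^*}/\Gamma_\nu$, pick $z_i \in K^*$ with $\nu^*(z_i) = \delta_i$, and pick $w_1, \ldots, w_f \in V_{\nu^*}$ whose residues in $V_{\nu^*}/m_{\nu^*}$ form a basis over $V_\nu/m_\nu$. My claim is that the $ef$ homogeneous elements $\{{\rm in}_{\nu^*}(z_iw_j)\}$ form a free $G_\nu$-basis of $G_{\nu^*}$. For spanning, take a nonzero homogeneous ${\rm in}_{\nu^*}(a)$ of degree $\gamma \in \Gamma_{\nu^*}$, write $\gamma = \delta_i + \gamma'$ uniquely with $\gamma' \in \Gamma_\nu$, and pick $c \in K$ with $\nu(c) = \gamma'$. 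Then $a/(cz_i)$ is a unit of $V_{\nu^*}$ whose residue expands as $\sum_j \bar{\alpha}_j \bar{w}_j$ with $\alpha_j \in V_\nu$, and reducing modulo $\mathcal{P}_\gamma^+(K^*)$ presents ${\rm in}_{\nu^*}(a)$ as a $G_\nu$-linear combination of the ${\rm in}_{\nu^*}(z_iw_j)$.

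Linear independence is the delicate step. A hypothetical $G_\nu$-linear relation may be assumed entirely homogeneous, and by comparing $\Gamma_{\nu^*}$-degrees only one coset representative $\delta_{i_0}$ can contribute, because the $\delta_i$ are distinct modulo $\Gamma_\nu$. The surviving relation has the form $\sum_j {\rm in}_\nu(a_j)\cdot{\rm in}_{\nu^*}(z_{i_0}w_j) = 0$ with $a_j \in K$ of common $\nu$-value $\gamma - \delta_{i_0}$ (or zero), meaning $\nu^*(\sum_j a_jw_j) > \gamma - \delta_{i_0}$. Dividing by any $b \in K$ with $\nu(b) = \gamma - \delta_{i_0}$ gives $\sum_j \overline{a_j/b}\cdot\bar{w}_j = 0$ in $V_{\nu^*}/m_{\nu^*}$, and the $V_\nu/m_\nu$-linear independence of the $\bar{w}_j$ forces $\overline{a_j/b} = 0$, contradicting $\nu(a_j/b) = 0$ unless $a_j = 0$. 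Hence $G_{\nu^*}$ is $G_\nu$-free of rank $ef$, so $[{\rm QF}(G_{\nu^*}):{\rm QF}(G_\nu)] = ef$, as required. The main obstacle is the grading bookkeeping: different coset representatives populate different $\Gamma_{\nu^*}$-cosets and must be separated before the residue-field linear independence of the $\bar w_j$ can be applied.
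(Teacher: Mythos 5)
Your proof is correct and is essentially the paper's own argument: both take the $ef$ products of coset representatives with lifts of a residue-field basis, prove spanning by reducing a homogeneous element to a unit and expanding its residue, and prove independence by separating $\Gamma_{\nu^*}/\Gamma_\nu$-degrees and invoking residue-field independence. Your only variation is packaging the computation as freeness of rank $ef$ over the homogeneous localization (graded field) before passing to quotient fields, whereas the paper works in the quotient fields directly; this is a cosmetic difference.
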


\begin{proof} Let $w_i\in V_{\nu^*}$ be such that the cosets of the $\nu^*(w_i)$ are a complete set of representatives of $\Gamma_{\nu^*}/\Gamma_{\nu}$, and let $c_j\in V_{\nu^*}$ be such that $\nu^*(c_j)=0$ for all $j$, and the classes of the $c_j$ in $V_{\nu^*}/m_{\nu^*}$ are a basis of $V_{\nu^*}/m_{\nu^*}$ over $V_{\nu}/m_{\nu}$.  Let $\overline w_i={\rm in}_{\nu^*}(w_i)$ and $\overline c_j={\rm in}_{\nu^*}(c_j)$. 

Suppose $f\in {\rm gr}_{\nu^*}(V_{\nu^*})$ is homogeneous and nonzero. Then $f={\rm in}_{\nu^*}(a)$ for some $a\in V_{\nu^*}$. We have that $\nu^*(a)-\nu^*(w_i)\in \Gamma_{\nu}$ for some $w_i$. So 
$$
\nu^*(a)-\nu^*(w_i)=\nu(c)-\nu(d)
$$
for some $c,d,\in V_{\nu}$. We have that
$$
\nu^*(\frac{ad}{w_ic})=0.
$$
Thus there exists $h_j\in V_{\nu}$, with $\nu(h_j)=0$, such that
$$
\sum_j\overline c_j {\rm in}_{\nu}(h_j)={\rm in}_{\nu^*}(\frac{ad}{w_ic}),
$$
so
$$
{\rm in}_{\nu^*}(a){\rm in}_{\nu}(d)=\sum_j{\rm in}_{\nu}(c)\overline w_i\overline c_j{\rm in}_{\nu}(h_j)
$$
and 
$$
f={\rm in}_{\nu^*}(a)\in \sum_{j}{\rm QF}({\rm gr}_{\nu}(V_{\nu}))\overline c_j\overline w_i,
$$
and thus 
$$
\sum_{i,j}{\rm QF}({\rm gr}_{\nu}(V_{\nu}))\overline c_j\overline w_i={\rm QF}({\rm gr}_{\nu^*}(V_{\nu^*})).
$$
Suppose there exist $h_{ij}\in {\rm QF}({\rm gr}_{\nu}(V_{\nu}))$ not all zero such that $\sum_{i,j}h_{ij}\overline c_j\overline w_i=0$. Multiplying be an appropriate nonzero element of ${\rm gr}_{\nu}(V_{\nu})$, we may assume that $h_{ij}\in{\rm gr}_{\nu}(V_{\nu})$ for all $i,j$. Writing $h_{ij}=\sum_{\gamma}h_{ij,\gamma}$ where $h_{ij,\gamma}$ is homogeneous of degree $\gamma$ in ${\rm gr}_{\nu}(V_{\nu})$, we see that $\sum_jh_{ij}\overline c_j=0$ for all $i$. Since $\overline c_j\in\mathcal P_0(V_{\nu^*})/\mathcal P_0^+(V_{\nu^*})$, we have that $\sum_j h_{ij\gamma}\overline c_j=0$ for all $\gamma\in\Gamma_{\nu}$. Suppose some $h_{ij,\gamma}$ is not zero, say $h_{ij_0,\gamma}$. Then
\begin{equation}\label{eqQ10}
\sum_j\left(\frac{h_{ij,\gamma}}{h_{ij_0,\gamma}}\right)\overline c_j=0.
\end{equation}
There exists $a_j\in V_{\nu}$ such that $\nu(a_j)=\gamma$ and ${\rm in}_{\nu}(a_j)=h_{ij,\gamma}$. We have that
$\nu(\frac{a_j}{a_{j_0}})=0$, and 
$$
{\rm in}_{\nu}(\frac{a_j}{a_{j_0}})=\frac{h_{ij,\gamma}}{h_{ij_0,\gamma}}.
$$
Thus (\ref{eqQ10}) is a relation of linear dependence of the $\overline c_j$ over $V_{\nu}/m_{\nu}$, which is impossible. We have that 
$\{\overline c_j\overline w_i\}$ are linearly independent over ${\rm QF}({\rm gr}_{\nu}(V_{\nu}))$, and are thus a basis of ${\rm QF}({\rm gr}_{\nu^*}(V_{\nu^*}))$ over ${\rm QF}({\rm gr}_{\nu}(V_{\nu}))$.
\end{proof}

\begin{Theorem}\label{TheoremQ4} Suppose that $K\rightarrow K^*$ is a finite field extension, $\nu^*$ is a valuation of $K^*$, and $\nu=\nu^*|K$. Suppose that $V_{\nu}/m_{\nu}
=V_{\nu^*}/m_{\nu^*}$ is algebraically closed of characteristic zero. Then
${\rm QF}({\rm gr}_{\nu^*}(V_{\nu^*}))$ is a finite Galois extension of ${\rm QF}({\rm gr}_{\nu}(V_{\nu}))$ with Galois group $\Gamma_{\nu^*}/\Gamma_{\nu}$.
\end{Theorem}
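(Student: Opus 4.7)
The plan is to realize $L^*:={\rm QF}({\rm gr}_{\nu^*}(V_{\nu^*}))$ as a Kummer extension of $L:={\rm QF}({\rm gr}_{\nu}(V_\nu))$. Since $f=1$, Proposition \ref{PropQ3} gives $[L^*:L]=e=|\Gamma_{\nu^*}/\Gamma_{\nu}|$, and the common residue field $k$ lies inside $L$ and contains all roots of unity (being algebraically closed of characteristic zero). My first observation would be that for every $\gamma\in\Gamma_\nu$ the natural inclusion
$$
\mathcal{P}_\gamma(V_\nu)/\mathcal{P}_\gamma^+(V_\nu)\hookrightarrow \mathcal{P}_\gamma(V_{\nu^*})/\mathcal{P}_{\gamma}^+(V_{\nu^*})
$$
is an isomorphism, since both sides are one-dimensional $k$-vector spaces and the generator ${\rm in}_{\nu}(b)$ (for $b\in V_\nu$ with $\nu(b)=\gamma$) maps to the nonzero element ${\rm in}_{\nu^*}(b)$. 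In particular the part of ${\rm gr}_{\nu^*}(V_{\nu^*})$ supported in degrees $\Gamma_\nu$ coincides with ${\rm gr}_\nu(V_\nu)$.

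Next, I would decompose $\Gamma_{\nu^*}/\Gamma_\nu \cong \ZZ/n_1\oplus\cdots\oplus \ZZ/n_r$ via the structure theorem, lift the generators to $\gamma_1,\ldots,\gamma_r\in\Gamma_{\nu^*}$, and choose $w_j\in V_{\nu^*}$ with $\nu^*(w_j)=\gamma_j$. Setting $\overline{w}_j={\rm in}_{\nu^*}(w_j)$, the element $\overline{w}_j^{n_j}$ is a nonzero homogeneous element of degree $n_j\gamma_j\in\Gamma_\nu$, so by the first step it lies in ${\rm gr}_\nu(V_\nu)\subset L$. Each $\overline{w}_j$ therefore satisfies a pure equation $X^{n_j}-\alpha_j=0$ over $L$; since $L$ contains the $n_j$-th roots of unity, Kummer theory produces an abelian Galois extension $F:=L(\overline{w}_1,\ldots,\overline{w}_r)$ of $L$ together with an injection
$$
{\rm Gal}(F/L)\hookrightarrow \mu_{n_1}\times\cdots\times\mu_{n_r},\qquad \sigma\longmapsto\left(\sigma(\overline{w}_j)/\overline{w}_j\right)_{j=1}^{r}.
$$

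To finish, I would prove $F=L^*$. Given any nonzero homogeneous $\overline a\in{\rm gr}_{\nu^*}(V_{\nu^*})$ of degree $\gamma$, write $[\gamma]=\sum_j a_j[\gamma_j]$ in $\Gamma_{\nu^*}/\Gamma_\nu$ and pick $b\in K$ with $\nu(b)=\gamma-\sum_j a_j\gamma_j\in\Gamma_\nu$. Then $\overline a$ and $\prod_j\overline{w}_j^{a_j}\cdot{\rm in}_\nu(b)$ both span the one-dimensional degree-$\gamma$ piece of the homogeneous localization of ${\rm gr}_{\nu^*}(V_{\nu^*})$ over $k$, so they differ by a scalar in $k^*$ and hence $\overline a\in F$. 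Summing, ${\rm gr}_{\nu^*}(V_{\nu^*})\subset F$ and thus $L^*\subseteq F$; the reverse containment is obvious. Comparing orders, $[L^*:L]=e=n_1\cdots n_r$ forces the Kummer injection to be a bijection, whence ${\rm Gal}(L^*/L)\cong \mu_{n_1}\times\cdots\times\mu_{n_r}\cong \Gamma_{\nu^*}/\Gamma_\nu$ as abstract groups. The main technical point is the identification of the $\Gamma_\nu$-graded part of ${\rm gr}_{\nu^*}(V_{\nu^*})$ with ${\rm gr}_\nu(V_\nu)$, which places the Kummer elements $\alpha_j=\overline{w}_j^{n_j}$ in $L$; once this is in place, standard Kummer theory combined with Proposition \ref{PropQ3} does the rest.
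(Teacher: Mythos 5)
Your proof is correct, but it takes a genuinely different route from the paper's. The paper first reduces to the case where $K^*/K$ is Galois with $G^s(\nu^*/\nu)=G(K^*/K)$ (hence $=G^i(\nu^*/\nu)$, since the residue field is common), descends each $\sigma\in G(K^*/K)$ to a graded automorphism via $\overline\sigma(\overline w_i)=\overline c_i\overline w_i$ where $\sigma(w_i)=c_iw_i$, uses the triviality of the ramification group (Theorems 21, 24 and 25 of \cite{ZS2}) to get an injection of $G(K^*/K)$ into the automorphisms of ${\rm QF}({\rm gr}_{\nu^*}(V_{\nu^*}))$ over ${\rm QF}({\rm gr}_{\nu}(V_{\nu}))$, identifies the group through $G^s=G^i\cong\Gamma_{\nu^*}/\Gamma_{\nu}$, and then treats a general finite extension by passing to a Galois closure and splitting fields, using Lemma \ref{LemmaQ2} and Proposition \ref{PropQ3} to identify the graded quotient fields. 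You instead work entirely on the graded side: the identification of the $\Gamma_{\nu}$-graded part of ${\rm gr}_{\nu^*}(V_{\nu^*})$ with ${\rm gr}_{\nu}(V_{\nu})$ (valid because $f=1$) places the elements $\alpha_j=\overline w_j^{\,n_j}$ in $L$, so $L^*$ is a compositum of Kummer extensions $L(\overline w_j)$, and the count $[L^*:L]=e$ from Proposition \ref{PropQ3}, together with the one-dimensionality over $k$ of each homogeneous component of the graded ring of quotients, shows both that the $\overline w_j$ generate $L^*$ over $L$ and that the Kummer injection is onto. Your route buys a uniform argument for an arbitrary finite extension, with no decomposition/inertia/ramification group theory and no Galois-closure reduction, and it makes the abelian Kummer structure of $L^*/L$ explicit; the paper's route buys a canonical isomorphism ${\rm Gal}(L^*/L)\cong\Gamma_{\nu^*}/\Gamma_{\nu}$ induced by actual automorphisms of $K^*/K$, whereas yours is a priori an abstract isomorphism through the cyclic decomposition (it can be made canonical by the perfect pairing $(\sigma,[\gamma])\mapsto \sigma(t_{\gamma})/t_{\gamma}\in k^{\times}$, with $t_{\gamma}$ any nonzero homogeneous element of degree $\gamma$). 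Two harmless details you should record: choose the lifts $\gamma_j\ge 0$, which is possible since every coset of $\Gamma_{\nu}$ contains a nonnegative element (replace $\gamma_j$ by $\gamma_j-e\gamma_j$ if $\gamma_j<0$), so that indeed $w_j\in V_{\nu^*}$; and note that $F=L(\overline w_1,\ldots,\overline w_r)$ is Galois over $L$ because it is the splitting field of $\prod_j(X^{n_j}-\alpha_j)$, the relevant roots of unity already lying in $k\subset L$.
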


\begin{proof} We will first prove the theorem with the assumptions that $K^*$ is Galois over $K$ and the splitting group
$G^s(\nu^*/\nu)$ of $\nu^*$ over $\nu$ is equal to the Galois group $G(K^*/K)$ of $K^*$ over $K$. With our assumption that $V_{\nu}/m_{\nu}=V_{\nu^*}/m_{\nu^*}$, we have that $G^s(\nu^*/\nu)$ is equal to the inertia group $G^i(\nu^*/\nu)$ of $\nu^*/\nu$ (Theorem 21, page 69 \cite{ZS2}).

Choose $w_i\in V_{\nu^*}$ which are a complete set of representatives of $\Gamma_{\nu^*}/\Gamma_{\nu}$. 
Let $K_1:= {\rm QF}({\rm gr}_{\nu}(V_{\nu}))$ and $K_2:={\rm QF}({\rm gr}_{\nu^*}(V_{\nu^*}))$. By the Corollary to Theorem 25, page 78 \cite{ZS2}, the $w_i$ are a basis of $K^*$ as a $K$-vector space and by the proof of Proposition \ref{PropQ3},
 letting $\overline w_i={\rm in }_{\nu^*}(w_i)$, the $\overline w_i$ are a basis of
$K_2$ as a $K_1$-vector space.

Suppose that $\sigma\in G(K^*/K)$. 
Since $G^s(\nu^*/\nu)=G(K^*/K)$, we have that $\nu^*$ is the unique extension of $\nu$ to $K^*$, so the valuations $\nu^*\sigma$ and $\nu^*$ are equal (formula (3) on page 68 \cite{ZS2}). Thus $\nu^*(\sigma(a))=\nu^*(a)$ for all $a\in K^*$ and so 
$$
\sigma(w_i)=c_iw_i
$$
 with $c_i\in V_{\nu^*}$ such that $\nu^*(c_i)=0$.  

Since the $w_i$ are a basis of $K^*$ over $K$ and they are a complete set of representatives of $\Gamma_{\nu^*}/\Gamma_{\nu}$, there exist uniquely determined index $\lambda(i,j)$, $g_{ij}\in K$ and $h_{ij}\in \sum_{k\ne \lambda(i,j)}Kw_k$ such that
$$
w_iw_j=g_{ij}w_{\lambda(i,j)}+h_{ij}
$$
with 
$$
\nu^*(g_{ij}w_{\lambda(i,j)})=\nu^*(w_iw_j)
$$
and $\nu^*(h_{ij})>\nu^*(w_iw_j)$. We compute
\begin{equation}\label{eqQ21}
\sigma(w_iw_j)=\sigma(g_{ij}w_{\lambda(i,j)}+h_{ij})
=g_{ij}\sigma(w_{\lambda(i,j)})+\sigma(h_{ij})
=g_{ij}c_{\lambda(i,j)}w_{\lambda(i,j)}+\sigma(h_{ij})
\end{equation}
and
\begin{equation}\label{eqQ22}
\sigma(w_iw_j)=\sigma(w_i)\sigma(w_j)=c_ic_jw_iw_j=c_ic_j(g_{ij}w_{\lambda(i,j)}+h_{ij}).
\end{equation}
Comparing (\ref{eqQ21}) and (\ref{eqQ22}), and since 
$$
\nu^*(\sigma(h_{ij})=\nu^*(h_{ij})>\nu^*(g_{ij}w_{\lambda(i,j)},
$$
we obtain 
$$
c_ic_j\equiv c_{\lambda(i,j)}\mbox{ mod }m_{\nu^*}.
$$

Define 
$$
\overline \sigma(\overline w_i)=\overline c_i\overline w_i.
$$
where  $\overline c_i={\rm in}_{\nu^*}(c_i)\in V_{\nu^*}/m_{\nu^*}$.
$\overline \sigma$ extends naturally to a $K_1$-vector space isomorphism of $K_2$. We will show that $\overline\sigma$ preserves the algebra structure on $K_2$, so that $\overline\sigma$ is actually a $K_1$-algebra isomorphism of $K_2$. To check this, we  observe that
 there exist $a_{ij},b_{ij}\in V_{\nu}$ such that $g_{ij}=\frac{a_{ij}}{b_{ij}}$ and so 
$$
\overline w_i\overline w_j=\frac{{\rm in}_{\nu}(a_{ij})}{{\rm in}_{\nu}(b_{ij})}\overline w_{\lambda(i,j)}.
$$
Thus
$$
\overline \sigma(\overline w_i\overline w_j)= \frac{{\rm in}_{\nu}(a_{ij})}{{\rm in }_{\nu}(b_{ij})} \sigma(\overline w_{\lambda(i,j)})
=\frac{{\rm in}_{\nu}(a_{ij})}{{\rm in }_{\nu}(b_{ij})}\overline c_{\lambda(i,j)}\overline w_{\lambda(i,j)}
=\overline c_i\overline w_i\overline c_j\overline w_j
=\overline \sigma(\overline w_i)\overline\sigma(\overline  w_j).
$$
Now from the facts that $G(K^*/K)=G^s(\nu^*/\nu)=G^i(\nu^*/\nu)$, and $G^r(\nu^*/\nu)= \{{\rm id}\}$ (by Theorem 24, page 77 \cite{ZS2}), and that if $\overline c_i=1$ for all $i$ then $\overline\sigma\in G^r(\nu^*/\nu)$ by equation (17), page 75 \cite{ZS2},
we have an injection of groups
$$
G(K^*/K)\rightarrow \mbox{Aut}(K_2/K_1).
$$
From the natural isomorphism 
$$
G(K^*/K)=G^i(\nu^*/\nu)=G^s(\nu^*/\nu)\cong \Gamma_{\nu^*}/\Gamma_{\nu}
$$
of the Corollary of page 77 \cite{ZS2}, and the fact that $[K_2:K_1]=ef=e=|\Gamma_{\nu^*}/\Gamma_{\nu}|$ by Proposition \ref{PropQ3},
we have that $K_2$ is Galois over $K_1$ with Galois group $\Gamma_{\nu^*}/\Gamma_{\nu}$.

We now establish the theorem in  the general case of $K\rightarrow K^*$. Let $K'$ be a Galois closure of $K$. Let $\nu'$ be an extension of
$\nu^*$ to $K'$. Let 
$$
K^s=K^{G^s(\nu'/\nu)}
$$
where
$G^s(\nu'/\nu)\le G(K'/K)$ is the splitting group of $\nu'$ over $\nu$, and let
$$
(K^s)^*=K^{G^s(\nu'/\nu^*)}
$$
where
$G^s(\nu'/\nu^*)\le G(K'/K^*)$ is the splitting group of $\nu'$ over $\nu^*$. The Galois group 
$$
G(K'/K^s)\cong G^s(\nu'/\nu)\cong \Gamma_{\nu'}/\Gamma_{\nu}
$$
 is Abelian ($G^i(\nu'/\nu)=G^s(\nu'/\nu)$ since $V_{\nu}/m_{\nu}$ is algebraically closed) 
so $(K^s)^*$ is Galois over $K^s$, with Galois group $\Gamma_{\nu^*}/\Gamma_{\nu}$.

Let $\overline \nu*=\nu'|(K^s)^*$ and $\overline\nu=\nu'|K^s$. We have that $G^s(\nu^*/\overline\nu)=G((K^s)^*/K^s)$ by Proposition 1.46 \cite{RTM}, since $V_{\nu'}$ is the only local ring of $K'$ lying over $V_{\overline\nu}$. Thus the analysis of the first case holds for $(K^s)^*/K^s$, and we have that ${\rm QF}({\rm gr}_{\overline \nu^*}(V_{\overline \nu^*}))$ is Galois over ${\rm QF}({\rm gr}_{\overline \nu}(V_{\overline \nu}))$ with Galois group $\Gamma_{\nu^*}/\Gamma_{\nu}$. By Lemma \ref{LemmaQ2} or Proposition \ref{PropQ3}, 
${\rm QF}({\rm gr}_{\nu}(V_{ \nu}))={\rm QF}({\rm gr}_{\overline \nu}(V_{\overline \nu}))$ and
${\rm QF}({\rm gr}_{\nu^*}(V_{ \nu^*}))={\rm QF}({\rm gr}_{\overline \nu^*}(V_{\overline \nu^*}))$ so the theorem holds.

\end{proof}

Suppose that $R$ is a (Noetherian) local ring which is dominated by a rank 1 valuation $\nu$. For $f\in \hat R$, we write $\nu(f)=\infty$ if there exists a Cauchy sequence $\{f_n\}$ in $R$ which converges to $f$, and such that
$\lim_{n\rightarrow \infty}\nu(f_n)=\infty$.
We define (Definition 5.2 \cite{CG}) a prime ideal
$$
P(\hat R)_{\infty}=\{f\in \hat R\mid \nu(f)=\infty\}
$$
in $\hat R$. We then have a canonical immediate extension $\hat\nu$ of $\nu$ to ${\rm QF}(\hat R/P(\hat R)_{\infty})$ which dominates $\hat R/P(\hat R)_{\infty}$.

\begin{Lemma}\label{Lemma12} Suppose that $\nu$ is  a rank 1 valuation of a field $K$ and $R$ is a (Noetherian) local ring which is dominated by $\nu$. Let $\hat\nu$ be the canonical extension of $\nu$ to ${\rm QF}(\hat R/P(\hat R)_{\infty})$ which dominates $\hat R/P(\hat R)_{\infty}$. Then the inclusion $R\rightarrow \hat R/P(\hat R)_{\infty}$ induces an isomorphism 
$$
{\rm gr}_{\nu}(R)\cong {\rm gr}_{\hat\nu}(\hat R/P(\hat R)_{\infty}).
$$
\end{Lemma}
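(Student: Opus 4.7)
The plan is to show that the natural homomorphism of graded pieces
$$
\mathcal{P}_\gamma(R)/\mathcal{P}_\gamma^+(R) \longrightarrow \mathcal{P}_\gamma(\hat R/P)/\mathcal{P}_\gamma^+(\hat R/P)
$$
induced by $R\rightarrow \hat R/P$ is a bijection for each $\gamma \in \Gamma_\nu = \Gamma_{\hat\nu}$, where $P := P(\hat R)_\infty$. Well-definedness is immediate: since $\nu$ has rank $1$, any nonzero $f\in R$ has finite value, so $R\cap P = 0$ and the map $R\hookrightarrow \hat R/P$ is injective; and because $\hat\nu$ extends $\nu$ on $K = {\rm QF}(R)$, one has $\hat\nu(f+P) = \nu(f)$ for every nonzero $f\in R$, which ensures $\mathcal{P}_\gamma(R)$ lands in $\mathcal{P}_\gamma(\hat R/P)$ and $\mathcal{P}_\gamma^+(R)$ lands in $\mathcal{P}_\gamma^+(\hat R/P)$. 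The same identity gives injectivity: if $f\in \mathcal{P}_\gamma(R)$ has image in $\mathcal{P}_\gamma^+(\hat R/P)$, then $\nu(f) = \hat\nu(f+P) > \gamma$, so $f\in \mathcal{P}_\gamma^+(R)$.

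The substantive step is surjectivity, which comes down to approximating elements of $\hat R$ by elements of $R$ in a way that improves $\hat\nu$-value. Given $a\in \hat R$ with $\hat\nu(a+P)=\gamma$, pick a Cauchy sequence $\{f_n\}\subset R$ converging to $a$ in the $m_R$-adic topology, so that $a-f_n\in m_R^n \hat R$ for large $n$. The main technical point is a uniform lower bound on $\hat\nu((a-f_n)+P)$. Since $R$ is Noetherian, fix generators $x_1,\ldots,x_k$ of $m_R$ and set $\delta := \min_i\nu(x_i)$, which is strictly positive because $\nu$ dominates $R$. Any element of $m_R^n$ then has $\nu$-value at least $n\delta$. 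For arbitrary $g\in m_R^n\hat R$, write $g = \sum_j h_j b_j$ with $h_j\in m_R^n$ and $b_j\in \hat R$; since $\hat\nu$ dominates $\hat R/P$ we have $\hat\nu(b_j+P)\ge 0$, and since $\hat\nu$ extends $\nu$ we have $\hat\nu(h_j+P)=\nu(h_j)\ge n\delta$, whence $\hat\nu(g+P)\ge n\delta$.

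Applying this to $g=a-f_n$ gives $\hat\nu((a-f_n)+P)\ge n\delta\to\infty$. For $n$ large enough this value exceeds $\gamma$, which forces $\nu(f_n)=\hat\nu(f_n+P)=\gamma$, so $f_n\in \mathcal{P}_\gamma(R)$ and its image in ${\rm gr}_{\hat\nu}(\hat R/P)$ represents the same class as $a$. This yields a preimage of the class of $a$, completes surjectivity, and hence proves the lemma. I expect the main obstacle to be the uniform bound on $\hat\nu$ on $m_R^n\hat R$, which is where both Noetherianity of $R$ (to guarantee $\delta > 0$) and the rank-one hypothesis (to ensure $\hat\nu$ makes sense on $\hat R/P$ and agrees with $\nu$ on $R$) are essential.
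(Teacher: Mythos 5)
Your proposal is correct and follows essentially the same route as the paper: approximate an element of $\hat R\setminus P(\hat R)_{\infty}$ by a Cauchy sequence from $R$, bound the value of the error $a-f_n\in m_R^n\hat R$ from below by $n$ times the minimum value of $\nu$ on $m_R$ (positive since $\nu$ dominates $R$), and conclude that for $n$ large the initial forms of $f_n$ and $a$ agree. The only difference is that you spell out the routine well-definedness and injectivity checks, which the paper leaves implicit.
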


\begin{proof}  Suppose $h\in \hat R\setminus P(\hat R)_{\infty}$. There exists a Cauchy sequence $\{f_n\}$ in $R$ such that
$\lim_{n\rightarrow \infty}f_n=h$. Let $m$ be a positive integer such that $m\nu(m_R)>\hat\nu(h)$ (where $\nu(m_R)=\min\{\nu(g)\mid g\in m_R\}$). There exists $n_0$ such that $f_n - h \in m_R^m\hat R$ for $n\ge n_0$. Then ${\rm in }_{\nu}(f_n)={\rm in }_{\hat \nu}(h)$ for $n\ge n_0$.
\end{proof}

\begin{Proposition}\label{Prop42} Suppose that $K\rightarrow K^*$ is a finite field extension, $\nu^*$ is a valuation of $K^*$ and $\nu=\nu^*|K$. Suppose that $R$ and $S$ are respective local rings of $K$ and $K^*$ such that $\nu^*$ dominates $S$ and $S$ lies over $R$. Further suppose that $\mathcal P(\hat S)_{\infty}=(0)$, $\mathcal P(\hat R)_{\infty}=(0)$ and ${\rm gr}_{\nu^*}(S)={\rm gr}_{\nu}(R)$. Then $\hat S=\hat R$.
\end{Proposition}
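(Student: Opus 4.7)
The plan is to lift the hypothesized equality of associated graded rings to an equality of the completions via successive approximation. First I would apply Lemma \ref{Lemma12} to both $R$ and $S$: since $\mathcal{P}(\hat R)_\infty = (0)$ and $\mathcal{P}(\hat S)_\infty = (0)$, one obtains immediate extensions $\hat\nu$ of $\nu$ to $\hat R$ and $\widehat{\nu^*}$ of $\nu^*$ to $\hat S$, together with natural isomorphisms ${\rm gr}_\nu(R) \cong {\rm gr}_{\hat\nu}(\hat R)$ and ${\rm gr}_{\nu^*}(S) \cong {\rm gr}_{\widehat{\nu^*}}(\hat S)$. Combined with the hypothesis ${\rm gr}_\nu(R) = {\rm gr}_{\nu^*}(S)$, these identifications give an equality ${\rm gr}_{\hat\nu}(\hat R) = {\rm gr}_{\widehat{\nu^*}}(\hat S)$ that is compatible with the inclusion $\hat R \hookrightarrow \hat S$ induced on completions.

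Given any $h \in \hat S$, I would construct an approximating sequence in $\hat R$ by iteratively matching initial forms. Setting $G_0 = 0$, at stage $n \ge 1$ the initial form of $h - G_{n-1}$ in ${\rm gr}_{\widehat{\nu^*}}(\hat S) = {\rm gr}_{\hat\nu}(\hat R)$ is the initial form of some $g_n \in \hat R$ with $\hat\nu(g_n) = \widehat{\nu^*}(h - G_{n-1})$; setting $G_n = g_1 + \cdots + g_n$, one has $\widehat{\nu^*}(h - G_n) > \hat\nu(g_n)$, so the sequence $\hat\nu(g_n)$ strictly increases to $\infty$.

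The main obstacle is to show the partial sums $G_n$ converge in $\hat R$ for the $m_{\hat R}$-adic topology, not merely in the valuation topology; a priori these topologies need not coincide. This uses that $\mathcal{P}(\hat R)_\infty = 0$ forces $\hat R$ to be a complete Noetherian local domain, hence analytically irreducible, so by Izumi's theorem the $m_{\hat R}$-adic and $\hat\nu$-adic topologies on $\hat R$ are linearly equivalent. Consequently $g_n \to 0$ and $G_n \to g$ in the $m_{\hat R}$-adic topology for some $g \in \hat R$. Viewing everything in $\hat S$, both $h - G_n$ (by construction) and $g - G_n$ (since $m_{\hat S}$-convergence refines $\widehat{\nu^*}$-convergence) have $\widehat{\nu^*}$-value tending to $\infty$, so $\widehat{\nu^*}(h - g) = \infty$. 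By $\mathcal{P}(\hat S)_\infty = 0$ this forces $h = g \in \hat R$, giving $\hat S = \hat R$.
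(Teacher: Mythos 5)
Your overall strategy is viable and genuinely different from the paper's: you recover each element of $\hat S$ by successive approximation with elements of $\hat R$, matching initial forms through the identifications of Lemma \ref{Lemma12}, whereas the paper never approximates elements at all — it observes that ${\rm gr}_{\nu^*}(S)={\rm gr}_{\nu}(R)$ forces $S/J_n\cong R/I_n$ for the valuation ideals $I_n\subset R$, $J_n\subset S$ of elements of value at least $n$, and then uses Chevalley's theorem to identify the completions with respect to these filtrations with the $m$-adic completions, giving $\hat S\cong \hat R$ in one stroke. Your element-wise argument does go through, including the step you flag as the main obstacle, but not for the reason you give.

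The appeal to Izumi's theorem is the genuine weak point. Izumi's (and Rees's) theorem asserts linear comparability for \emph{divisorial} valuations centered in the maximal ideal of an analytically irreducible local domain; it does not apply to an arbitrary rank one valuation, and the linear equivalence you invoke is in fact false in this generality: for infinitely singular rank one valuations with $\mathcal P(\hat R)_{\infty}=(0)$ the value $\hat\nu(f)$ need not be bounded by any linear function of ${\rm ord}_{m_{\hat R}}(f)$. What you actually need is only the topological statement, and it is exactly Chevalley's theorem (Theorem 13, page 270 \cite{ZS2}), the same tool the paper uses: since $\mathcal P(\hat R)_{\infty}=(0)$, the valuation $\hat\nu$ is finite on $\hat R\setminus\{0\}$, so the ideals $\{f\in\hat R\mid \hat\nu(f)\ge N\}$ for $N\in\ZZ_{>0}$ have zero intersection in the complete Noetherian local ring $\hat R$, hence are cofinal with the powers of $m_{\hat R}$; this gives $g_n\rightarrow 0$ adically and the convergence of the $G_n$. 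Two smaller points should also be made explicit: the strictly increasing values $\hat\nu^*(h-G_n)$ really do tend to infinity, because only finitely many elements of the value semigroup of the Noetherian local ring $\hat S$ lie below any fixed bound $B$ (as $\mathcal P_{B}(\hat S)$ contains a power of $m_{\hat S}$); and the induced map $\hat R\rightarrow \hat S$ is injective (a nonzero $a\in\hat R$ has finite value, which is computed by elements of $R$ approximating $a$ and is unchanged in $\hat S$), so the conclusion $h=g\in\hat R$ makes sense. With Chevalley substituted for Izumi, your proof is correct.
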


\begin{proof} For $n\in \ZZ_{\ge 0}$, let
$$
I_n=\{f\in R\mid \nu(f)\ge n\}\mbox{ and }J_n=\{g\in S\mid \nu^*(g)\ge n\}.
$$
Since $\mathcal P(\hat R)_{\infty}=(0)$ and $\mathcal P(\hat S)_{\infty}=(0)$, $\cap_{n\ge 1}(J_n\hat S)=(0)$ and $\cap_{n\ge 1}(I_n\hat R)=(0)$. 
By Chevalley's theorem (Theorem 13, page 270 \cite{ZS2}), the topology on $\hat S$ induced by $\{J_n\hat S\}$ is equivalent to the $m_S\hat S$-adic topology and the topology on $\hat R$ induced by $\{I_n\hat R\}$ is equivalent to the $m_R\hat R$-adic topology. Since ${\rm gr}_{\nu^*}(S)={\rm gr}_{\nu}(R)$,
$\hat S/J_n\hat S\cong S/J_n\cong R/I_n\cong \hat R/I_n\hat R$ for all $n$, so $\hat S\cong \hat R$.
\end{proof}

The assumptions that $\mathcal P(\hat R)_{\infty}=(0)$, $\mathcal P(\hat S)_{\infty}=(0)$  and ${\rm gr}_{\nu^*}(S)={\rm gr}_{\nu}(R)$ are necessary in Proposition \ref{Prop42}, as is shown by the following example.

\begin{Example}\label{Ex4} There exists a finite extension $K\rightarrow K^*$ of algebraic function fields over any ground field $k$ of characteristic $\ne 2$, a discrete rank 1 $k$-valuation $\nu^*$ of $K^*$ with restriction $\nu$ to $K$, and an algebraic regular local ring $R$ of $K$ which is dominated by $\nu$ such that the local ring $S$ obtained by localizing the integral closure of $R$ in $K^*$ at the center of $\nu^*$ is not a regular local ring but the natural inclusion
${\rm gr}_{\nu}(R)\rightarrow {\rm gr}_{\nu^*}(S)$ is an isomorphism.
\end{Example} 

\begin{proof} Let $A=k[x,y]$ and $p(x)=x+a_2x^2+a_3d^3+\cdots$ be a transcendental series in $k[[x]]$. Let $K={\rm QF}(A)=k(x,y)$. Define a valuation $\nu$ on $K$ which dominates $R=A_{(x,y)}$ by $\nu(f)=n$ if $f\in A$ and $\mbox{ord }f(x,p(x))=n$ (computed in $k[[x]]\cong \hat R/(y-p(x))$). We have that $\Gamma_{\nu}\cong\ZZ$ and $V_{\nu}/m_{\nu}\cong k$. (The valuation $\nu$ is the restriction of the order valuation of $k[[x]]$ to $R$ by the natural inclusion $R\rightarrow \hat R/(y-p(x))\cong k[[x]]$. We have that  $P(\hat R)_{\infty}=(y-p(x))$. 

Let $B=k[x,y,z]/(z^2-xy)$. $B$ is the integral closure of $A$ in the quotient field $K^*$ of $B$. Since $\mbox{char } k\ne 2$, in $k[[x,z]]$, we have a factorization
$$
z^2-xp(x)=z^2-x^2-a_2x^3-\cdots =(z-\phi(x))(z-\psi(x))
$$
where $\phi(x)=x+b_2x^2+\cdots$ and $\psi(x)=-x+c_2x^2+\cdots$ for some $b_i,c_i\in k$. Define a valuation $\nu^*$ on $K^*$ by $\nu^*(f)=n$ if $f\in B$ and
$\mbox{ord}_xf(x,p(x),\phi(x))=n$. We have that $\nu^*$ is the restriction of the order valuation of $k[[x]]$ to $S=B_{(x,y,z)}$ by the natural inclusion
$S\rightarrow \hat S/(y-p(x), z-\phi(x))\cong k[[x]]$. We have that $\Gamma_{\nu^*}\cong \ZZ$, $V_{\nu^*}/m_{\nu^*}=k$ and $P(\hat S)_{\infty}=(y-p(x),z-\phi(x))$. We have that $\nu=\nu^*|K$, and the natural inclusion of $R$ into $S$ induces an isomorphism $\hat R/P(\hat R)_{\infty}\cong \hat S/P(\hat S)_{\infty}\cong k[[x]]$. 
Thus the natural inclusion ${\rm gr}_{\nu}(R)\cong {\rm gr}_{\nu^*}(S)$ is an isomorphism by Lemma \ref{Lemma12}. In fact, we have that both graded rings are isomorphic to ${\rm gr} _{xk[[x]]}k[[x]]\cong k[x]$, a $\ZZ$-graded ring with $\deg(x)=1$.
\end{proof}

The following lemma isolated an argument in the proof of Theorem 4.9 \cite{RTM}.

\begin{Lemma}\label{Lemma10} Suppose that $K\rightarrow K^*$ is a finite Galois extension, $\nu^*$ is a valuation of $K^*$ and $\nu=\nu^*|K$. Then there exists a finite set of elements $a_1,\ldots,a_m\in V_{\nu}$ such that if $R$ is a local ring of $K$ which contains $a_1,\ldots,a_m$ and $S$ is the local ring of the integral closure of $R$ in $K^*$ obtained by localizing at the center of $\nu^*$, then 
$$
G^s(S/R)=G^s(\nu^*/\nu).
$$
\end{Lemma}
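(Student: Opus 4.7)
The plan is to take $a_1,\ldots,a_m \in V_\nu$ to be the coefficients of monic integral dependence relations for a finite collection of elements that will separate the distinct centers of the extensions of $\nu$ to $K^*$ once they lie in the integral closure of $R$. The inclusion $G^s(\nu^*/\nu)\subseteq G^s(S/R)$ is automatic for any $R\subseteq V_\nu$, since a $\sigma$ fixing $V_{\nu^*}$ fixes $m_{\nu^*}$, also fixes $R$ and its integral closure $T'$ in $K^*$, and therefore fixes $m_{\nu^*}\cap T'=m_S\cap T'$ and hence $S$. The content is the reverse inclusion, which is where the choice of the $a_j$ will matter.

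For the construction, let $G=G(K^*/K)$ and let $\nu^*=\nu_1,\ldots,\nu_r$ be the complete list of extensions of $\nu$ to $K^*$ (so $r=[G:G^s(\nu^*/\nu)]$, and $G$ acts transitively on the $\nu_i$). Let $T$ be the integral closure of $V_\nu$ in $K^*$, and let $\mathfrak{p}_i=m_{\nu_i}\cap T$; these are exactly the maximal ideals of $T$, and they are pairwise distinct. For each $i\ge 2$, I pick $b_i\in\mathfrak{p}_i\setminus\mathfrak{p}_1$, which is possible because $\mathfrak{p}_i$ and $\mathfrak{p}_1$ are distinct maximal ideals of $T$. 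Each $b_i$ is integral over $V_\nu$, so satisfies a monic polynomial with coefficients in $V_\nu$; I take $\{a_1,\ldots,a_m\}$ to be the union of the coefficient sets of one such monic relation for each $i\ge 2$.

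Now suppose $R$ is a local ring of $K$ containing $a_1,\ldots,a_m$, let $T'$ be the integral closure of $R$ in $K^*$, and let $S=(T')_{m_{\nu^*}\cap T'}$. By construction each $b_i$ is integral over $R$, so $b_i\in T'$, and $b_i\in\mathfrak{p}_i\cap T'$ while $b_i\notin\mathfrak{p}_1\cap T'$. Take $\sigma\in G^s(S/R)$. Since $\sigma$ fixes $K$ it fixes $R$, hence $T'$, and $\sigma(S)=S$ forces $\sigma(\mathfrak{p}_1\cap T')=\mathfrak{p}_1\cap T'$. Write $\sigma(\mathfrak{p}_1)=\mathfrak{p}_i$ for some $i$ (using that $G$ permutes the $\mathfrak{p}_j$); intersecting with $T'$ forces $\mathfrak{p}_i\cap T'=\mathfrak{p}_1\cap T'$. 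If $i\ge 2$, then $b_i\in\mathfrak{p}_i\cap T'=\mathfrak{p}_1\cap T'\subseteq\mathfrak{p}_1$ contradicts $b_i\notin\mathfrak{p}_1$. Hence $i=1$ and $\sigma\in G^s(\nu^*/\nu)$. The only point of care is ensuring that each $b_i$ actually lies in $T'$ once $R$ contains the $a_j$, and this is precisely what the construction of the $a_j$ as coefficients of monic integral relations over $V_\nu$ arranges; I do not expect any serious obstacle beyond this bookkeeping.
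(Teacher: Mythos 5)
Your proof is correct and follows essentially the same route as the paper: choose elements of the integral closure of $V_{\nu}$ in $K^*$ that separate the maximal ideals lying over $m_{\nu}$, take the coefficients of their monic integral equations as $a_1,\ldots,a_m$, and then use the stabilizer of the center in the integral closure of $R$ to force $G^s(S/R)\subseteq G^s(\nu^*/\nu)$, the reverse inclusion being general. The only cosmetic differences are that the paper uses a single element $u\in H_1\setminus\bigcup_{i\ge 2}H_i$ and cites Proposition 1.50 of Abhyankar's book for the reverse inclusion, whereas you use one separating element per conjugate prime and argue the reverse inclusion directly.
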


\begin{proof} Let $\nu_1^*=\nu^*, \nu_2^*,\ldots,\nu_n^*$ be the distinct extensions of $\nu$ to $K^*$, and let $U=\cap_{i=1}^n V_{\nu_i^*}$ be the integral closure of $V_{\nu}$ in $K^*$. Let $H_i=m_{\nu_i^*}\cap U$ for $1\le i\le n$ be the maximal ideals of $U$. There exists $u\in U$ such that $u\in H_1$ and $u\not\in H_i$ for $2\le i\le n$ (Lemma 1.3 \cite{RTM}). Let $u^m+a_1u^{m-1}+\cdots+a_m=0$ with $a_i\in V_{\nu}$ be an equation of integral dependence of $u$ over $V_{\nu}$. 

Suppose that $R$ is a local ring of $K$ which is dominated by $\nu$ and contains $a_1,\ldots,a_m$. Let $T$ be the integral closure of $R$ in $K^*$, and let $P_1=T\cap m_{\nu^*}=T\cap H_1$, so that $S=T_{P_1}$. Then $u\in S\cap H_1=P_1$ and $u\not\in S\cap H_i$ for $2\le i\le n$. Suppose $\sigma\in G^s(S/R)$. Then  $\sigma(P_1)=P_1$ so $\sigma(H_1)=H_1$. Thus $G^s(S/R)\subset G^s(\nu^*/\nu)$. By Proposition 1.50 \cite{RTM}, we must have that
$G^s(\nu^*/\nu)\subset G^s(S/R)$. Thus $G^s(\nu^*/\nu)= G^s(S/R)$.
\end{proof}

\begin{Theorem}\label{Theorem5} Suppose that $K$ is an algebraic function field over an arbitrary field $k$ and $K^*$ is a finite extension of $K$. Suppose that $\nu^*$ is a rank one $k$-valuation of $K^*$.
 Let $\nu$ be the restriction of $\nu^*$ to $K$. 
 Then there exist a finite set of elements $f_1,\ldots,f_n\in V_{\nu}$ such that if 
$R$ is a normal algebraic local ring of $K$ which is dominated by $\nu$ which contains $f_1,\ldots,f_n$ and $S$  is the localization at the center of $\nu^*$ on the integral closure of $R$ in $K^*$, then
 ${\rm gr}_{\nu^*}(S)$ is integral over ${\rm gr}_{\nu}(R)$.
 \end{Theorem}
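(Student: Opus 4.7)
The strategy uses Galois theory of valuations to descend integrality through the splitting (decomposition) field of $\nu^*$ over $\nu$, together with the characteristic polynomial / norm method.

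First, I would reduce to $K^*/K$ Galois by passing to a Galois closure $K'$ and extending $\nu^*$ to a valuation $\nu'$ of $K'$; letting $T'$ be the localization at the center of $\nu'$ of the integral closure of $R$ in $K'$, the inclusion $S\hookrightarrow T'$ preserves $\nu$-values (since $\nu'|K^*=\nu^*$), so ${\rm gr}_{\nu^*}(S)\hookrightarrow{\rm gr}_{\nu'}(T')$ is a graded subring and integrality over ${\rm gr}_\nu(R)$ transfers from the larger extension. Assume $K^*/K$ is Galois. By Lemma~\ref{Lemma10}, include elements $a_1,\ldots,a_m\in V_\nu$ in the finite list so that $G^s(S/R)=G^s(\nu^*/\nu)=:H$ for any qualifying $R$. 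Set $K^s:=(K^*)^H$, $\nu^s:=\nu^*|K^s$, and let $S^s$ be the localization at the center of $\nu^s$ of the integral closure of $R$ in $K^s$. In the tower $K\subset K^s\subset K^*$, $\nu^*$ is the unique extension of $\nu^s$ to $K^*$, and $\nu^s/\nu$ is immediate. By transitivity of integrality it suffices to prove (a) that ${\rm gr}_{\nu^*}(S)$ is integral over ${\rm gr}_{\nu^s}(S^s)$ and (b) that ${\rm gr}_{\nu^s}(S^s)$ is integral over ${\rm gr}_\nu(R)$.

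For (a), given $w\in T$ (the integral closure of $R$ in $K^*$), form the characteristic polynomial $F(X)=\prod_{\sigma\in H}(X-\sigma(w))$ over $K^s$. By $H$-invariance and normality of $R$, its coefficients $c_i$ lie in $T^s\subset S^s$, and since $\nu^*$ is the unique extension of $\nu^s$, each $\sigma\in H$ preserves $\nu^*$-values, so $\nu^s(c_i)\ge i\nu^*(w)$. Writing $F(w)=0$ as $w^{|H|}=-\sum c_i w^{|H|-i}$ and comparing the minimal-value parts of both sides yields the identity ${\rm in}_{\nu^*}(w)^{|H|}+\sum_{i\in J}{\rm in}_{\nu^s}(c_i){\rm in}_{\nu^*}(w)^{|H|-i}=0$ in ${\rm gr}_{\nu^*}(S)$, where $J=\{i\ge 1:\nu^s(c_i)=i\nu^*(w)\}$; this is a nontrivial monic equation of degree $|H|$ for ${\rm in}_{\nu^*}(w)$ over ${\rm gr}_{\nu^s}(S^s)$. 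A general $w\in S=T_P$ is handled by writing $w=t/u$ with $t,u\in T$ and $\nu^*(u)=0$: since $H$ stabilizes $P$ by our choice of $R$, the norm $N^s(u)=\prod_{\sigma\in H}\sigma(u)$ is a unit of $S^s$, and the characteristic polynomial of $u$ expresses $u^{-1}$ as a polynomial in $u$ over $S^s$, whence ${\rm in}_{\nu^*}(u)^{-1}$, and therefore ${\rm in}_{\nu^*}(w)$, is integral over ${\rm gr}_{\nu^s}(S^s)$.

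The main obstacle is (b). Although $\nu^s/\nu$ is immediate, giving ${\rm gr}_{\nu^s}(V_{\nu^s})={\rm gr}_\nu(V_\nu)$ by Lemma~\ref{LemmaQ2}, the value semigroup $S^{S^s}(\nu^s)$ may properly contain $S^R(\nu)$ (cf.\ Example~\ref{Ex1}), so enlargement of $R$ is essential. I would pre-select a $K$-basis $\alpha_1,\ldots,\alpha_d\in V_{\nu^s}$ of $K^s$ whose minimal polynomials over $K$ have coefficients in $V_\nu$, and include those coefficients in the list so that each $\alpha_j$ becomes integral over $R$. By immediacy of $\nu^s/\nu$ and Lemma~\ref{LemmaQ2}, choose $g_j\in V_\nu$ with ${\rm in}_\nu(g_j)={\rm in}_{\nu^s}(\alpha_j)$, and include these $g_j$ along with a fixed discriminant $\Delta$ of the basis. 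For any $v\in T^s$, the trace-discriminant formula yields $\Delta v=\sum r_j\alpha_j$ with $r_j\in R$; combining this decomposition with the characteristic polynomial of $v$ over $K$ (multiplied through by suitable powers of $\Delta$ to remain monic over $R$ in $\Delta v$) produces, after initial-form analysis using that ${\rm in}_{\nu^s}(\alpha_j)={\rm in}_\nu(g_j)\in{\rm gr}_\nu(R)$, a monic integral equation for ${\rm in}_{\nu^s}(v)$ over ${\rm gr}_\nu(R)$. Localization from $T^s$ to $S^s$ is treated as in step (a). The main technical difficulty is ensuring that the resulting initial form equations are nontrivial, which requires careful tracking of potential cancellations among conjugate terms.
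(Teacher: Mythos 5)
Your skeleton (pass to a Galois closure, invoke Lemma~\ref{Lemma10}, split the tower at the splitting field $K^s$, and run the symmetric--function argument over $K^s$, where $\nu^*$ is the unique extension of $\nu^s$) coincides with the paper's, and your step (a) is correct. The genuine gap is step (b), and it is not a technicality you may defer: showing that ${\rm gr}_{\nu^s}(S^s)$ is integral over ${\rm gr}_{\nu}(R)$ is exactly the point where the theorem fails without enlarging $R$ (Example~\ref{Ex1} is a case with $K^s=K^*$ and $\nu^s/\nu$ immediate in which integrality fails). Your proposed mechanism does not produce a monic initial-form equation for ${\rm in}_{\nu^s}(v)$. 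The characteristic polynomial of $v$ over $K$ involves conjugates of $v$ under automorphisms \emph{outside} the decomposition group, whose $\nu'$-values are unrelated to $\nu^s(v)$; hence one cannot conclude $\nu(c_i)\ge i\,\nu^s(v)$, the term $v^{d}$ need not attain the minimal value, and no integral dependence of ${\rm in}_{\nu^s}(v)$ results. Likewise, in $\Delta v=\sum_j r_j\alpha_j$ the leading terms can cancel, so ${\rm in}_{\nu^s}(v)$ is in general not expressible through ${\rm in}_{\nu}(r_j)$ and ${\rm in}_{\nu}(g_j)$; knowing the initial forms of the finitely many basis elements $\alpha_j$ says nothing about the initial forms of $R$-combinations with cancellation, and controlling precisely these cancellations is the entire content of the statement. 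Your closing sentence concedes this, so step (b) is asserted, not proved. It is also telling that your argument never uses the rank-one hypothesis, which the theorem carries and which the actual proof needs at exactly this step.

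For comparison, the paper closes (b) by a completely different mechanism: since the list already contains the elements of Lemma~\ref{Lemma10}, one has $G^s(T/R)=G^s(\nu'/\nu)$, and then Theorem 1.47 of \cite{RTM} gives $R/m_R=U/m_U$ and $m_RU=m_U$ for the local ring $U$ of the splitting field lying under $T$; hence $\hat R=\hat U$ by (10.14) and (10.1) of \cite{RES}. Because $\nu$ has rank one, the valuation extends to the completion and both graded rings can be computed from $\hat R=\hat U$ (as in Lemma~\ref{Lemma12}), giving the much stronger conclusion ${\rm gr}_{\nu}(R)\cong{\rm gr}_{\nu'}(U)$, after which integrality of the bottom step is automatic. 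Two further points: your appeal to a Galois closure tacitly assumes $K^*/K$ separable, whereas the theorem allows an arbitrary finite extension; the purely inseparable part of the tower $K\subset\overline K\subset K^*$ must be treated separately (in the paper, via $z^{p^n}\in\overline K$, so ${\rm in}_{\nu^*}(z)^{p^n}\in{\rm gr}_{\overline\nu}(A)$). Finally, the localization detour in your step (a) is unnecessary: with $G^s(\nu^*/\nu^s)={\rm Gal}(K^*/K^s)$ the integral closure of $S^s$ in $K^*$ is already local and equal to $S$, so every element of $S$ is integral over $S^s$.
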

 
 \begin{proof} First suppose that $K^*$ is Galois over $K$ and $\nu^*$ is the unique extension of $\nu$ to $K^*$. Then $S$ is the integral closure of $R$ in $K^*$. Suppose that $z\in S$. Index $G=G(K^*/K)$ as
 $$
 G(K^*/K)=\{\sigma_1,\ldots,\sigma_r\}.
 $$
 Let 
 $$
 f(x)=\prod_{i=1}^r(x-\sigma_i(z))\in R[x].
 $$
 We expand 
 $$
 f(x)=x^n+S_1x^{n-1}+\cdots+S_n
 $$
 where $n=|G|$ and $S_i$ is the $i$-th elementary symmetric function in $\{\sigma_1(z),\sigma_2(z),\ldots,\sigma_r(z)\}$. Since $\nu^*$ is the unique extension of $\nu$ to $K^*$, $\nu^*(\sigma(z))=\nu^*(z)$ for all $\sigma\in G$, so $\nu^*(S_i)\ge i\nu^*(z)$ for all $i$ and $\nu^*(S_iz^{n-i})\ge \nu^*(z^n)$. We thus have a relation
 $$
 {\rm in }_{\nu^*}(z)^n+\sum_{\nu(S_i)=i\nu^*(z)}{\rm in }_{\nu}(S_i){\rm in}_{\nu^*}(z)^{n-i}=0
 $$
 in $\mathcal P_{n\nu^*(z)}(S)/\mathcal P^+_{n\nu^a(z)}(S)$. Thus ${\rm in }_{\nu}(z)$ is integral over ${\rm gr}_{\nu}(R)$.
 
 Now suppose that $K^*/K$ is separable, but with no other restrictions. Let $K'$ be a Galois closure of $K^*$ over $K$. Let $\nu'$ be an extension of $\nu$ to $K'$ and let $T$ be the localization of the integral closure of $R$ in $K'$ at the center of $\nu'$. Let 
 $K^s=(K^*)^{G^s(\nu'/\nu)}$ be the splitting field of $\nu'$ over $\nu$. 
 By Lemma \ref{Lemma10}, there exist $f_1,\ldots,f_n \in V_{\nu}$ such that if $f_1,\ldots,f_n\in R$,
  then the splitting group $G^s(T/R)=G^s(\nu'/\nu)$. Let $U$ be the localization at the center of $\nu'$ of the integral closure of $R$ in $K^s$. 
 ${\rm gr}_{\nu'}(T)$ is integral over ${\rm gr}_{\nu'}(U)$ by the first part of this proof.
 
 Since ${\rm gr}_{\nu^*}(S)$ is contained in ${\rm gr}_{\nu'}(T)$, we have reduced to establishing that ${\rm gr}_{\nu'}(U)$ is integral over ${\rm gr}_{\nu}(R)$.  By Theorem 1.47 \cite{RTM}, we have that $R/m_R=U/m_U$ and $m_RU=m_U$, so by (10.14) and (10.1) \cite{RES}, $\hat R=\hat S$. Let $\hat \nu$ be an extension of $\hat\nu$ to ${\rm QF}(\hat R)$ which dominates $\hat R$. Then $\Gamma_{\nu}$ is an isolated subgroup of $\Gamma_{\hat\nu}$. We have
 $$
 {\rm gr}_{\nu}(R)\cong \bigoplus_{\gamma\in \Gamma_{\nu}}\mathcal P_{\gamma}(\hat R)/\mathcal P_{\gamma}^+(\hat R)
 \cong \bigoplus_{\gamma\in \Gamma_{\nu'}}\mathcal P_{\gamma}(\hat U)/\mathcal P_{\gamma}^+(\hat U)
 \cong {\rm gr}_{\nu'}(U).
 $$
 
 The remaining case is of a general finite extension $K^*$ over $K$. We have a factorization $K\rightarrow \overline K\rightarrow K^*$ where 
 $\overline K$ is separable over $K$ and $K^*$ is purely inseparable over $\overline K$. Let $\overline \nu$ be the restriction of $\nu^*$ to $\overline K$, and let $A$ be the localization of the integral closure of $R$ in $\overline K$ at the center of $\overline\nu$.  By the first two parts of the proof, we have reduced to showing that ${\rm gr}_{\nu^*}(S)$ is integral over ${\rm gr}_{\overline\nu}(A)$. $S$ is the integral closure of $A$ in $K^*$ since $S$ is the only local ring of $K^*$ lying over $A$. Suppose that $z\in S$. Then there exists an exponent $z^{p^n}$ (where $p$ is the characteristic of $k$) 
 such that $z^{p^n}\in K$. Thus $z^{p^n}\in S\cap K=R$. so we have that ${\rm in}_{\nu^*}(z^{p^n})\in {\rm gr}_{\overline\nu}(A)$.
\end{proof}

\begin{Theorem}\label{Theoreminsep} Suppose that $K$ and $K^*$ are fields, $\nu^*$ is a valuation of $K^*$ with restriction $\nu$ to $K$, $K^*$ is Galois over $K$, $\nu^*$ has rank 1, $\nu^*$ is the unique extension of $\nu$ to $K^*$,
$[\Gamma_{\nu^*}:\Gamma_{\nu}]$ is a power of $p$ where $p$ is the residue characteristic of $V_{\nu*}$ and $V_{\nu^*}/m_{\nu^*}$ is purely inseparable over $[V_{\nu}/m_{\nu}]$
(so $[K^*:K]=p^n$ for some $n$). Suppose $R$ is a normal local ring of $K$ which is dominated by $\nu$ and $S$ is the localization of the integral closure of $R$ in $K^*$ at the center of $\nu^*$. Then ${\rm gr}_{\nu^*}(S)^{p^n}\subset {\rm gr}_{\nu}(R)$.
\end{Theorem}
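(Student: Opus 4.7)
Write $G = G(K^*/K)$, so $|G| = [K^*:K] = p^n$. The approach is to combine a Galois-theoretic reduction with a single norm computation in the associated graded ring.

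First I would observe that ${\rm gr}_{\nu^*}(S)$ has characteristic $p$: its degree zero piece $S/m_S$ embeds in $V_{\nu^*}/m_{\nu^*}$, which has residue characteristic $p$, and this forces the whole graded ring to have characteristic $p$. Consequently Frobenius is additive, and $\bigl(\sum_i \xi_i\bigr)^{p^n} = \sum_i \xi_i^{p^n}$. Since every element of ${\rm gr}_{\nu^*}(S)$ is a finite sum of homogeneous pieces, and every nonzero homogeneous element has the form ${\rm in}_{\nu^*}(z)$ for some $z \in S$, the theorem reduces to showing ${\rm in}_{\nu^*}(z)^{p^n} \in {\rm gr}_{\nu}(R)$ for every individual $z \in S$.

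The Galois-theoretic step is to verify that $G = G^r(\nu^*/\nu)$. Uniqueness of $\nu^*$ over $\nu$ gives $G = G^s(\nu^*/\nu)$; pure inseparability of the residue extension then forces $G^s = G^i$. For the final equality $G^i = G^r$, I would invoke the classical injection $G^i/G^r \hookrightarrow {\rm Hom}\bigl(\Gamma_{\nu^*}/\Gamma_{\nu},\, (V_{\nu^*}/m_{\nu^*})^*\bigr)$: the source is a $p$-group by hypothesis, while $(V_{\nu^*}/m_{\nu^*})^*$ has no nontrivial $p$-torsion in characteristic $p$, so this Hom group vanishes. The upshot is that every $\sigma \in G$ satisfies $\nu^*(\sigma(a) - a) > \nu^*(a)$ for all $a \in V_{\nu^*} \setminus \{0\}$, equivalently the induced $\bar\sigma$ acts as the identity on ${\rm gr}_{\nu^*}(V_{\nu^*})$.

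The conclusion comes via the norm. For $z \in S$, set $N(z) = \prod_{\sigma \in G} \sigma(z) \in K$. Since $z$ is integral over $R$ and $R$ is normal, $N(z) \in R$. Because $\nu^*$ is the unique extension of $\nu$, every conjugate $\sigma(z)$ has value $\nu^*(z)$, so no cancellation occurs when passing to initial forms, and using $\bar\sigma = {\rm id}$ on initial forms we obtain
$$
{\rm in}_{\nu^*}(N(z)) \;=\; \prod_{\sigma \in G} {\rm in}_{\nu^*}(\sigma(z)) \;=\; {\rm in}_{\nu^*}(z)^{|G|} \;=\; {\rm in}_{\nu^*}(z)^{p^n}.
$$
Since $N(z) \in R$, the left side is the image of ${\rm in}_{\nu}(N(z)) \in {\rm gr}_{\nu}(R)$ under the natural inclusion ${\rm gr}_{\nu}(R) \hookrightarrow {\rm gr}_{\nu^*}(S)$, completing the argument.

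The main obstacle is the Galois-theoretic identification $G = G^r$; once that is in hand the norm computation is immediate. A smaller point to handle with care is the characteristic $p$ reduction to homogeneous pieces, which requires Frobenius to be additive on the full graded ring ${\rm gr}_{\nu^*}(S)$, not merely on individual graded components.
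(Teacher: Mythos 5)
Your proposal is correct and follows essentially the same route as the paper: reduce to homogeneous elements using that ${\rm gr}_{\nu^*}(S)$ has characteristic $p$, identify $G(K^*/K)$ with the ramification group $G^r(\nu^*/\nu)$ so that $\nu^*(\sigma(z)-z)>\nu^*(z)$ for all $\sigma$, and then pass a Galois-symmetric expression in the conjugates of $z$ to initial forms. Your norm identity ${\rm in}_{\nu^*}(z)^{p^n}={\rm in}_{\nu^*}(N(z))$ is just a streamlined form of the paper's conclusion ${\rm in}_{\nu^*}(z)^{r}+{\rm in}_{\nu}(a_r)=0$ (note $a_r=\pm N(z)$), which the paper obtains instead from the monic equation $z^r+a_1z^{r-1}+\cdots+a_r=0$ together with the estimates $\nu^*(S_i(z))>i\nu^*(z)$, a step you bypass by invoking multiplicativity of initial forms directly.
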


\begin{proof} With our assumptions, we have that $G(K^*/K)=G^r(\nu^*/\nu)$ (page 68 \cite{ZS2}, Theorem 21, page 69 \cite{ZS2} and Theorem 25, page 76 \cite{ZS2}) and $|G^r(\nu^*/\nu)|=p^n$ for some $n$ (Theorem 24, page 77 \cite{ZS2}).
 Thus $r=[K^*:K]=p^n$ for some positive integer $n$. By page 68 \cite{ZS2},
\begin{equation}\label{eqpi2}
 \nu^*\sigma(x)=\nu^*(x)
 \mbox{ for all $\sigma\in G(K^*/K)$ and $x\in (K^*)^{\times}$} 
 \end{equation}
 and by page 75 \cite{ZS2},
\begin{equation}\label{eqpi1}
\nu^*(\sigma(x)-x)>\nu^*(x)\mbox{ for all $\sigma\in G(K^*/K)$ and $x\in (K^*)^{\times}$}.
\end{equation}

Index $G(K^*/K)$ as $G(K^*/K)=\{\sigma_,\ldots,\sigma_r\}$. For $z\in V_{\nu^*}$, let
$S_i(z)$ be the $i$-th symmetric functions in $\sigma_1(z),\ldots,\sigma_r(z)$. By (\ref{eqpi1}), 
$$
S_i(z)= \binom{p^n}{i}z^i+h_i
$$
with $\nu^*(h_i)>i\nu^*(z)$.
so
\begin{equation}\label{eqpi3}
\nu^*(S_i(z))>i\nu^*(z)\mbox{ for }0<i<p^n.
\end{equation}
Now suppose $h\in {\rm gr}_{\nu^*}(S)$. Since ${\rm gr}_{\nu^*}(S)$ has characteristic $p>0$, we may assume that $h$ is homogeneous to establish that $h^{p^n}\in{\rm gr}_{\nu}(R)$. Then $h={\rm in}_{\nu^*}(z)$ for some $z\in S$.
Let
$$
f(x)=\prod_{i=1}^r(x-\sigma_i(z))=\sum_{i=0}^rS_{r-i}(z)x^i
=x^r+a_1x^{r-1}+\cdots+a_r
$$
with $a_i=S_i(z)\in K\cap S=R$. We have
$$
z^r+a_1z^{r-1}+\cdots+a_r=0.
$$

By (\ref{eqpi3}), ${\rm in}_{\nu^*}(z)^r+{\rm in}_{\nu}(a_r)=0$ in $\mathcal P_{r\nu^*(z)}(S)/\mathcal P^+_{r\nu^*(z)}(S)$. Thus $h^r\in {\rm gr}_{\nu}(R)$.

\end{proof}

\section{An Abhyankar Jung theorem for associated graded rings of valuations}\label{SecB}

\begin{Theorem}\label{Theorem2} Suppose that $K$ is an algebraic function field over an algebraically closed field $k$ of characteristic zero and $K^*$ is a finite extension of $K$. Suppose that $\nu^*$ is a rank one $k$-valuation of $K^*$
whose residue field is $k$. Let $\nu$ be the restriction of $\nu^*$ to $K$, and suppose that $R^*$ is an algebraic local ring of $K$ which is dominated by $\nu$. Then there exists a sequence of monoidal transforms $R^*\rightarrow R_0$ along $\nu$ such that $R_0$ is regular, and if $T$ is the local ring of the center of $\nu^*$ on the integral closure of $R_0$ in $K^*$, then 
\begin{enumerate}
\item[1)] ${\rm gr}_{\nu^*(T)}$ is a free ${\rm gr}_{\nu}(R_0)$-module of finite rank $e=[\Gamma_{\nu^*}/\Gamma_{\nu}]$.
\item[2)] $\Gamma^*/\Gamma$ acts on ${\rm gr}_{\nu^*}(T)$ with ${\rm gr}_{\nu^*}(T)^{\Gamma_{\nu^*}/\Gamma_{\nu}}\cong {\rm gr}_{\nu}(R_0)$.
\end{enumerate} 
\end{Theorem}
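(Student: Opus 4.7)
The plan is to realize the key reduction stated in the introduction: produce a blow-up $R^* \to R_0$ such that, after completion, $\hat{T} = \bigoplus_{i=1}^e w_i \hat{R}_0$ with the values $\nu^*(w_i)$ forming a complete set of coset representatives of $\Gamma_\nu$ in $\Gamma_{\nu^*}$, and then transfer this decomposition to the associated graded rings via Lemma \ref{Lemma12}.

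First I would invoke the local monomialization theorem of \cite{C} together with the refinement in \cite{CG} that provides controlled extensions of the valuations $\nu, \nu^*$ to the completions of the local rings. Since we are in characteristic zero and $\nu^*$ has rank one, this yields a sequence of monoidal transforms $R^* \to R_0$ along $\nu$ with $R_0$ regular, a corresponding $T$, and prime ideals $P(\hat{R}_0)_\infty$, $P(\hat{T})_\infty$ such that the induced extension $\hat{R}_0/P(\hat{R}_0)_\infty \to \hat{T}/P(\hat{T})_\infty$ is a finite extension of complete regular local rings in which one can choose regular parameters $x_1,\ldots,x_n$ and $y_1,\ldots,y_n$ with a monomial relation $x_i = \prod_j y_j^{a_{ij}}$ (up to units), and such that the induced valuations $\hat{\nu}, \hat{\nu}^*$ are immediate extensions of $\nu,\nu^*$.

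Next, with this monomial (toroidal) form of the extension on the completions, the discriminant is a simple normal crossings divisor, so the classical Abhyankar-Jung theorem \cite{RAF} applies: $\hat{T}/P(\hat{T})_\infty$ is a free module over $\hat{R}_0/P(\hat{R}_0)_\infty$ generated by monomials $w_1,\ldots,w_e$ in the $y_j$. Using the flexibility in the monomialization, I would arrange these monomials so that $\{\nu^*(w_i)\}$ is a complete set of representatives of $\Gamma_{\nu^*}/\Gamma_\nu$. Since the $w_i$ have pairwise distinct classes in $\Gamma_{\nu^*}/\Gamma_\nu$, their initial forms $\overline{w}_i := {\rm in}_{\nu^*}(w_i)$ are linearly independent over ${\rm gr}_{\hat{\nu}}(\hat{R}_0/P(\hat{R}_0)_\infty)$, and by Lemma \ref{Lemma12} this transfers to give
\[
{\rm gr}_{\nu^*}(T) = \bigoplus_{i=1}^e \overline{w}_i\, {\rm gr}_\nu(R_0),
\]
a free module of rank $e$, establishing (1).

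For (2), Theorem \ref{TheoremQ4} gives a Galois action of $\Gamma_{\nu^*}/\Gamma_\nu$ on ${\rm QF}({\rm gr}_{\nu^*}(V_{\nu^*}))$ over ${\rm QF}({\rm gr}_\nu(V_\nu))$, acting by $\overline{\sigma}(\overline{w}_i) = \overline{c}_i \overline{w}_i$ for units $c_i$. Since the $w_i$ constructed above already provide a basis for ${\rm gr}_{\nu^*}(T)$ over ${\rm gr}_\nu(R_0)$ via Step 2, the action preserves ${\rm gr}_{\nu^*}(T)$, and an element $\sum a_i \overline{w}_i$ is invariant if and only if all components with $\overline{w}_i$ of nontrivial coset vanish, i.e.\ it lies in $\overline{w}_0 {\rm gr}_\nu(R_0) = {\rm gr}_\nu(R_0)$, giving the invariant identification.

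The main obstacle will be Steps 1--2: ensuring that the monomialization produced by \cite{C}, \cite{CG} can be refined (by further blowing up along $\nu$) so that the free basis provided by the Abhyankar-Jung theorem can be taken with the prescribed valuation profile, namely one representative per coset of $\Gamma_\nu$ in $\Gamma_{\nu^*}$. This is where all the hypotheses (characteristic zero, rank one, $k$ algebraically closed of characteristic zero) are used essentially, and is precisely the point where the pathology of Example \ref{Ex3} is excluded.
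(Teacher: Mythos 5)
Your overall strategy — strong monomialization via \cite{C} and \cite{CG}, the classical Abhyankar--Jung theorem, and transfer of a decomposition $\hat T=\bigoplus_{i=1}^e w_i\hat R_0$ to the associated graded rings — is the same as the paper's, but the decisive step is left unproved. You write that ``using the flexibility in the monomialization'' you would arrange the monomial basis so that the values $\nu^*(w_i)$ form a complete set of representatives of the cosets of $\Gamma_{\nu}$ in $\Gamma_{\nu^*}$, and you concede at the end that this is the main obstacle; but no argument is offered, and this is exactly where the content of the theorem lies. In the paper no further blowing up is used for this point: after the strong monomialization of Theorem \ref{Theorem1} one has the index identity $\ZZ^s/A^t\ZZ^s\cong\Gamma_{\nu^*}/\Gamma_{\nu}$ of (\ref{eqA11}) together with the degree formula $[{\rm QF}(\hat T):{\rm QF}(\hat R_0)]=ef$ of (\ref{eqA9}), and the basis is not an arbitrary one furnished by Abhyankar--Jung but the specific set of monomials $w_1,\ldots,w_e$ coming from the fundamental parallelepiped of the monoid generated by the rows of $A$ (Proposition \ref{SG1}, Lemma \ref{SG2}); $\hat T$ is identified with the completion of this monoid ring by a degree count inside $E_0$, and then Lemma \ref{LemmaA22} shows that the coset property of the $\hat\nu^*(w_i)$ is \emph{forced} by the freeness of $\hat T$ over $\hat R_0$ and (\ref{eqA11}). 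Without some substitute for this toric identification, ``arranging'' the basis is not something the monomialization theorem by itself provides.

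Two further points in your Steps 1--2 need repair. First, the monomial form (\ref{eqA2}) relates $R_0$ to $S$, which lies over the auxiliary normal ring $R$ rather than over $R_0$; it does not by itself make the discriminant of $R_0$ in $K^*$ a normal crossings divisor, nor is $\hat T/P(\hat T)_{\infty}$ regular in general ($T$ is typically a quotient singularity — that is the point of the statement; only $\hat R_0/P(\hat R_0)_{\infty}$ and $\hat S/P(\hat S)_{\infty}$ are regular). The paper arranges the Abhyankar--Jung hypothesis inside the monomialization: $B(K^*/R_1)$ is principalized by purity of the branch locus and its generator is monomialized before the final steps, giving the last assertion of Theorem \ref{Theorem1} that $B(K^*/R_0)$ contains $x_1\cdots x_s$. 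Second, identifying the free rank with $e$ requires $[{\rm QF}(\hat T):{\rm QF}(\hat R_0)]=ef$ (with $f=1$ here), which is obtained through the splitting-group argument (choosing $R_0$ to dominate a ring $R'$ with $G^s(T'/R_0)=G^s(\nu'/\nu)$) and Proposition 1 of \cite{LU}; your sketch assumes this rank without argument. Your outline of part 2), the diagonal action of $\Gamma_{\nu^*}/\Gamma_{\nu}$ on the basis with invariants ${\rm gr}_{\nu}(R_0)$, does agree with the paper once a basis with the coset property is actually in hand.
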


With the assumptions of Theorem \ref{Theorem2}, $R_0$ being a regular local ring with the branch divisor of $R_0$ in $K^*$ being a SNC divisor is not enough to obtain the conclusions of Theorem \ref{Theorem2}.  Such an example is given of a rational rank 1 valuation in two dimensional, characteristic zero, algebraic function fields, in Example 9.4 \cite{CV1}. In this example, 
$$
R_0=k[x^2,y^2]_{(x^2,y^2)}\rightarrow T=k[x,y]_{(x,y)}
$$
and ${\rm gr}_{\nu^*}(T)$ is not a finitely generated ${\rm gr}_{\nu}(R_0)$-module. However, by Theorem \ref{Theorem2}, we will find a finitely generated extension after some blowing up above $R_0$.

In Theorem 7.38 \cite{CP}, an example is given of  rational rank 1 valuations in  two dimensional  algebraic function fields, over an algebraically closed field $k$ of positive characteristic, where the branch locus of $R_0$ in $K^*$ is a SNC divisor, $R_0$ and $T$ are regular local rings, the extension is immediate ($e=f=1$) and ${\rm gr}_{\nu^*}(T)$ is not a finitely generated ${\rm gr}_{\nu}(R_0)$-module. In fact, lack of finite generation continues to hold (in this example) after any amount of blowing up above $R_0$.

We summarize some results on ramification. Suppose that $R$ is a normal algebraic  local ring with quotient field $K$ and $K^*$ is a finite separable extension of $K$. 

Let $B(K^*/R)=\sqrt{D(K^*/R)}$ where $D(K^*/R)$ is the discriminant ideal of $R\rightarrow K^*$ (page 31, \cite{RTM}). For $\mathfrak p$ a prime ideal of $R$, $R_{\mathfrak p}$ is unramified in $K^*$ if and only if $B(K^*/R)\not\subset \mathfrak p$ (Theorems 1.44 and 1.44 A \cite{RTM}).

Suppose that $S$ is the localization of the integral closure of $R$ in $K^*$ at a maximal ideal. 
If $R$ has residue characteristic zero, then the ramification index of $S$ over $R$ is
$$
r(S:R)=\frac{[{\rm QF}(\hat S):{\rm QF}(\hat R)]}{[S/m_S:R/m_R]}
$$
(Definition 1 \cite{LU} or Definition 1 \cite{RAF}).
Further,
$r(S:R)=1$ if and only if $R\rightarrow S$ is unramified (Lemma 4 \cite{LU}).

We now summarize some results on toric rings from \cite{BG}.
Suppose that $M$ is a finitely generated submonoid (subsemigroup) of $\ZZ^n$ for some $n\ge 0$. Let
$$
\tilde M_{\ZZ^n}=\{v\in \ZZ^n\mid mv\in M\mbox{ for some }m\in \ZZ_{>0}\}.
$$
We have that $\tilde M_{\ZZ^n}=(\RR_{\ge 0} M)\cap \ZZ^n$ (Proposition 2.2 \cite{BG}).

\begin{Proposition}\label{SG1}(Proposition 2.43 \cite{BG}) Suppose $v_1,\ldots,v_n\in \ZZ^n_{\ge 0}$ are linearly independent. Let
$$
{\rm par}(v_1,\ldots,v_n)=\{q_1v_1+\cdots+q_nv_n\mid 0\le q_i<1\mbox{ for }i=1,\ldots,n\}.
$$
Let $M$ be the submonoid of $\ZZ^n$ generated by $v_1,\ldots,v_n$. Then
\begin{enumerate}
\item[a)] $\Lambda =\ZZ^n\cap {\rm par}(v_1,\ldots,v_n)$ is a system of generators of the $M$-module $\tilde M_{\ZZ^n}$.
\item[b)] $(a+M)\cap(b+M)=\emptyset$ for $a,b\in \Lambda$ with $a\ne b$.
\item[c)] $|\Lambda|=[\QQ U\cap \ZZ^n:U]$ where $U$ is the sublattice of $\ZZ^n$ generated by $M$.
\end{enumerate}
\end{Proposition}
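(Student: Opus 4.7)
The plan is to derive all three statements from a single elementary observation: since $v_1,\ldots,v_n$ are linearly independent in $\ZZ^n$, they form a $\QQ$-basis of $\QQ^n$, so every $w\in \QQ^n$ has a unique expansion $w=\sum q_iv_i$ with $q_i\in \QQ$. The cone $\RR_{\ge 0}M$ is therefore simplicial, and the only outside input I need is the identification $\tilde M_{\ZZ^n}=\RR_{\ge 0}M\cap \ZZ^n$ from Proposition 2.2 of \cite{BG}.

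For (a), I would take $w\in \tilde M_{\ZZ^n}$ and use that identification to force $q_i\ge 0$ in the unique basis expansion $w=\sum q_iv_i$. Splitting $q_i=\lfloor q_i\rfloor+\{q_i\}$ with $\{q_i\}\in [0,1)$ yields $w=a+m$, where $a:=\sum \{q_i\}v_i$ lies in ${\rm par}(v_1,\ldots,v_n)$ and $m:=\sum \lfloor q_i\rfloor v_i\in M$. The equality $a=w-m$ then forces $a\in \ZZ^n$, so $a\in \Lambda$ and $w\in a+M$, proving that $\Lambda$ generates $\tilde M_{\ZZ^n}$ as an $M$-module.

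For (b), if $a+m=b+m'$ with $a,b\in \Lambda$ and $m,m'\in M$, then expanding both sides in the basis $v_1,\ldots,v_n$ and comparing coefficients shows that the fractional coefficients $\alpha_i,\beta_i\in [0,1)$ of $a$ and $b$ differ by integers, hence coincide; so $a=b$. For (c), since the $v_i$ span $\QQ^n$ the lattice $U=\ZZ v_1+\cdots+\ZZ v_n$ satisfies $\QQ U\cap \ZZ^n=\ZZ^n$, and the claim reduces to $|\Lambda|=[\ZZ^n:U]$. Running the construction of (a) on an arbitrary $w\in \ZZ^n$, now allowing $\lfloor q_i\rfloor$ to be any integer (i.e.\ replacing $M$ by $U$), together with the uniqueness argument of (b), exhibits $\Lambda$ as a complete set of coset representatives for $\ZZ^n/U$ and yields the index formula. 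There is no real obstacle here; the only thing demanding care is keeping straight which combinations of the $v_i$ have coefficients in $\ZZ_{\ge 0}$, in $\ZZ$, or in $[0,1)$, and invoking the uniqueness of basis expansion at the right moment in each part.
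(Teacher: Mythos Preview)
Your proposal is correct and is precisely the standard argument for this result. Note, however, that the paper does not give its own proof of this proposition: it is quoted verbatim as Proposition 2.43 of \cite{BG} and used as a black box, so there is nothing in the paper to compare your argument against. Your write-up is the natural proof and would serve perfectly well as a self-contained justification.
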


\begin{Lemma}\label{SG2} (Lemma 4.40 \cite{BG}) Suppose that $k$ is a ring, $M$ is a finitely generated submonoid of $\ZZ^n$ and $k[z_1,\ldots,z_n]$ is a polynomial ring over $k$. Then $k[z^v\mid v\in \tilde M_{\ZZ^n}]$ is the integral closure of $k[v\mid v\in M]$ in $k[z_1,\ldots, z_n]$. 
\end{Lemma}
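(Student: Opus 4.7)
The plan is to prove the two inclusions separately. Set $A := k[z^v \mid v \in M]$, $B := k[z_1,\ldots,z_n]$, and $C := k[z^v \mid v \in \tilde{M}_{\ZZ^n}]$. For $C$ contained in the integral closure of $A$ in $B$, since integral elements form a ring it suffices to check integrality of each $k$-algebra generator: for $v \in \tilde{M}_{\ZZ^n}$ there is by definition an $m \in \ZZ_{>0}$ with $mv \in M$, whence $(z^v)^m = z^{mv} \in A$, so $z^v$ satisfies the monic polynomial $X^m - z^{mv} \in A[X]$.

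For the reverse inclusion, I would exploit the natural $\ZZ^n$-grading of $B$ by multidegree, under which $A$ is a graded subring (namely the $k$-linear span of the homogeneous elements $z^v$, $v \in M$). The standard fact that the integral closure of a graded subring in an ambient $\ZZ^n$-graded ring is itself graded then reduces the problem to showing that a single monomial $f = c z^w$ (with $c \in k$ nonzero and $w \in \ZZ^n_{\ge 0}$) which is integral over $A$ must satisfy $w \in \tilde{M}_{\ZZ^n}$.

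Given such a monomial $f$ with integral equation $f^d + a_{d-1} f^{d-1} + \cdots + a_0 = 0$, I would expand each $a_i = \sum_{v \in M} \lambda_{i,v} z^v$ and substitute, obtaining $c^d z^{dw} + \sum_{i=0}^{d-1} \sum_{v \in M} \lambda_{i,v} c^i z^{v + iw} = 0$ in $B$. A term of the double sum contributes to the coefficient of $z^{dw}$ only when $v + iw = dw$, i.e., $v = (d-i)w$, so setting the total coefficient of $z^{dw}$ to zero gives $c^d + \sum_{i : (d-i)w \in M} \lambda_{i,(d-i)w} c^i = 0$. Since $c^d \neq 0$, the index set $\{i \in \{0,\ldots,d-1\} : (d-i)w \in M\}$ is nonempty, and choosing any such $i$ and setting $m := d - i > 0$ yields $mw \in M$, hence $w \in \tilde{M}_{\ZZ^n}$, as required.

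The main obstacle is justifying the gradedness of the integral closure, which requires some care when $k$ is an arbitrary commutative ring rather than a field. A clean route is to pass to the Laurent extension $B' := B \otimes_k k[t_1^{\pm 1},\ldots,t_n^{\pm 1}]$ and consider the $k[t^{\pm 1}]$-algebra automorphism of $B'$ sending a homogeneous element $b$ of multidegree $w$ to $t^w b$; this automorphism preserves $A \otimes_k k[t^{\pm 1}]$ and hence the integral closure, and comparing coefficients of the $t^u$'s in an integral relation for the image of $f$ separates the homogeneous components of $f$ and shows each is integral over $A$. Once gradedness is in hand, the monomial-level analysis above closes the argument. A minor technicality concerning zero-divisors in $k$ (so that $c^d \ne 0$ when $c \ne 0$) can be handled by shrinking the degree $d$ of the integral equation to the smallest value for which $c^d \ne 0$, or by reducing modulo minimal primes of $k$.
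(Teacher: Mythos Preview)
The paper does not prove this lemma; it merely quotes it from \cite{BG}. Your argument is the standard one (and essentially the argument in Bruns--Gubeladze): one direction is immediate from $(z^v)^m\in A$, and for the other you use that the integral closure of a $\ZZ^n$-graded subring is graded (your torus-variable trick is the usual device for this over a general base), reducing to a single monomial and comparing the coefficient of $z^{dw}$ in an integral relation.

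One genuine issue you brush against but do not resolve: as literally stated in the paper (``$k$ is a ring''), the lemma is \emph{false} when $k$ is not reduced. Take $k=\ZZ/4\ZZ$, $n=2$, $M=\ZZ_{\ge 0}(1,0)$; then $A=C=k[z_1]$, but $2z_2\in k[z_1,z_2]$ is nilpotent, hence integral over $A$, and $2z_2\notin C$. Your first suggested patch (``shrink $d$ to the smallest value with $c^d\ne 0$'') does not help: if $c$ is nilpotent then $cz^w$ is integral over $A$ regardless of $w$, and nothing forces $w\in\tilde M_{\ZZ^n}$. Your second patch (reduce modulo each minimal prime of $k$) is correct and proves the lemma when $k$ is reduced: for $w\notin\tilde M_{\ZZ^n}$ the coefficient $c_w$ lies in every minimal prime, hence is zero. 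Since every application in the paper has $k$ a field, this is enough; you should just note explicitly that the statement requires $k$ reduced (or, as in the paper's actual use, a field).
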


The prime ideals $P(\hat R)_{\infty}$ are defined before Lemma \ref{Lemma12}.

\begin{Theorem}\label{Theorem1} Let $k$ be a field of characteristic zero, $K$ an algebraic function field over $k$, $K^*$ a finite algebraic extension of $K$, $\nu^*$ a $k$-valuation of $K^*$, $\nu=\nu^*|K$ such that rank $\nu=1$ and  rat rank $\nu=s$. Let $n={\rm trdeg}_kK-{\rm trdeg}_kV_{\nu}/m_{\nu}$, $e=e(\nu^*/\nu)$, $f=f(\nu^*/\nu)$. Let $g_1,\ldots,g_f$ be a basis of $V_{\nu^*}/m_{\nu^*}$ over $V_{\nu}/m_{\nu}$. 

Suppose that $S^*$ is an algebraic local ring of $K^*$ which is dominated by $\nu^*$ and 
$R^*$ is an algebraic local ring of $K$ which is dominated by $\nu$ and $S^*$. Then there exists a commutative diagram
$$
\begin{array}{llll}
R_0&\rightarrow &S&\subset V_{\nu^*}\\
\uparrow&&\uparrow&\\
R^*&\rightarrow&S^*
\end{array}
$$
where $S^*\rightarrow S$ and $R^*\rightarrow R_0$ are sequences of monoidal transforms along $\nu^*$ such that $R_0$ has regular parameters $(x_1,\ldots,x_n)$ and $S$ has regular parameters $(y_1,\ldots,y_n)$ such that there are units $\delta_1,\ldots,\delta_s\in S$ and a $s\times s$ matrix $A=(a_{ij})$ of natural numbers (elements of $\ZZ_{\ge 0}$) such that ${\rm Det}(A)\ne 0$,
\begin{equation}\label{eqA2}
\begin{array}{lll}
x_1&=& \delta_1 y_1^{a_{11}}\cdots y_s^{a_{1s}}\\
&\vdots&\\
x_2&=& \delta_s y_1^{a_{s1}}\cdots y_s^{a_{ss}}\\
x_{s+1}&=& y_{s+1}\\
&\vdots&\\
x_n&=& y_n
\end{array}
\end{equation}
and $\{\nu(x_1),\ldots,\nu(x_s)\}$, $\{\nu^*(y_1),\ldots,\nu^*(y_s)\}$ are rational bases of 
$\Gamma_{\nu}\otimes \QQ$ and $\Gamma_{\nu^*}\otimes\QQ$ respectively. Furthermore, there exists $\lambda$ with $\lambda<n-s$ such that
\begin{equation}\label{eqA3}
p(\hat R_0)_{\infty}=(h_1,\ldots,h_{n-\lambda})
\end{equation}
with 
\begin{equation}\label{eqA4}
h_i\equiv x_{s+i}\mbox{ mod }m_{\hat R_0}^2
\end{equation}
for $1\le i\le n-\lambda$ and
\begin{equation}\label{eqA5}
p(\hat S)_{\infty}=p(\hat R_0)_{\infty}\hat S
\end{equation}
are regular primes ($\hat R_0/P(\hat R_0)_{\infty}$ and $\hat S/P(\hat S)_{\infty}$ are regular local rings).

There exists a (unique) normal algebraic local ring $R$ of $K$ which is dominated by $\nu$ such that $S$ is the localization of the integral closure of $R$ in $K^*$ at the center of $\nu^*$.
We have that
\begin{equation}\label{eqA6}
[S/m_S:R/m_R]=f, |{\rm Det}(A)|=e, [{\rm QF}(\hat S):{\rm QF}(\hat R)]=ef
\end{equation}
and \begin{equation}\label{eqA7}
\{g_1,\ldots,g_f\}\mbox{ is a basis of $S/m_S$ over $R/m_R=R_0/m_{R_0}$}.
\end{equation}
We further have that 
\begin{equation}\label{eqA11}
\ZZ^s/A^t\ZZ^s  \cong \Gamma_{\nu^*}/\Gamma_{\nu}
\end{equation}
  by the map $(b_1,\ldots,b_s)\mapsto b_1\nu^*(y_1)+\cdots+b_s\nu^*(y_s)$.

Let $T$ be the localization of the integral closure of $R_0$ in $K^*$ at the center of $\nu^*$.
We have that
\begin{equation}\label{eqA8}
[T/m_T:R_0/m_{R_0}]=f
\end{equation}
and $\{g_1,\ldots,g_f\}$ is a basis of $T/m_T$ over $R_0/m_{R_0}$.
Further,
\begin{equation}\label{eqA12}
p(\hat T)_{\infty}=p(\hat R_0)_{\infty}\hat T,
\end{equation}
and
\begin{equation}\label{eqA9}
[{\rm QF}(\hat T):{\rm QF}(\hat R_0)]=ef.
\end{equation}
The branch ideal $B(K^*/R_0)$ of $R_0\rightarrow K^*$  contains $\prod_{i=1}^sx_i$.

\end{Theorem}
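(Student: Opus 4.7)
The plan is to combine the local monomialization theorem of \cite{C} with the completion-level structural results of \cite{CG}. First, I apply the local monomialization theorem \cite{C} to the extension $R^* \subset S^*$ along $\nu^*$. This produces sequences of monoidal transforms $R^* \to R_0$ along $\nu$ and $S^* \to S$ along $\nu^*$ together with regular parameter systems $(x_1,\ldots,x_n)$ of $R_0$ and $(y_1,\ldots,y_n)$ of $S$ satisfying the monomial relations \eqref{eqA2}, with $\{\nu(x_1),\ldots,\nu(x_s)\}$ and $\{\nu^*(y_1),\ldots,\nu^*(y_s)\}$ forming rational bases of $\Gamma_\nu \otimes \QQ$ and $\Gamma_{\nu^*}\otimes \QQ$. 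By performing further monoidal transforms along the valuations I may additionally arrange that these rational bases generate $\Gamma_\nu$ and $\Gamma_{\nu^*}$ as abelian groups, that the prescribed elements $g_1,\ldots,g_f$ lie in $S$ and project to a basis of $S/m_S$ over $R_0/m_{R_0}$, and then invoke \cite{CG} to secure the completion-level statements \eqref{eqA3}--\eqref{eqA5} that $P(\hat R_0)_\infty$ and $P(\hat S)_\infty$ are regular primes of the asserted form.

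Given this setup, applying $\nu^*$ to \eqref{eqA2} and using $\nu^*(\delta_i)=0$ yields the linear relations $\nu(x_i)=\sum_j a_{ij}\nu^*(y_j)$. Since the $\nu^*(y_j)$ are a $\ZZ$-basis of $\Gamma_{\nu^*}$ and the $\nu(x_i)$ a $\ZZ$-basis of $\Gamma_\nu$, the map $(b_1,\ldots,b_s)\mapsto \sum_j b_j\nu^*(y_j)$ induces the isomorphism $\ZZ^s/A^t\ZZ^s \cong \Gamma_{\nu^*}/\Gamma_\nu$ of \eqref{eqA11}, from which $e=|\det A|$ is immediate. The residue degree claim \eqref{eqA7} (and its analogue \eqref{eqA8} for $T$) is ensured by the choice of $g_1,\ldots,g_f$ in Step 1, while the completion degree $[{\rm QF}(\hat S):{\rm QF}(\hat R)]=ef$ in \eqref{eqA6} follows from Proposition \ref{PropQ3} combined with the monomial structure; \eqref{eqA9} is obtained analogously for the pair $(\hat R_0, \hat T)$.

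For the existence and uniqueness of $R$, I take $R$ to be the localization of $S \cap K$ at the prime $m_S \cap K$. This is a normal local ring of $K$, dominated by $\nu$, that contains $R_0$; since $S$ is normal and lies over $R$, localizing the integral closure of $R$ in $K^*$ at the center of $\nu^*$ recovers $S$, and any normal local ring with this property must coincide with $(S\cap K)_{m_S \cap K}$. Relation \eqref{eqA12} follows because $T$ is the integral closure localization of $R_0$ and the completion-level prime $P_\infty$ is preserved under the integral extension $\hat R_0 \to \hat T$. The branch ideal claim follows from a direct Jacobian computation on \eqref{eqA2}: the Jacobian determinant of the substitution equals $\det(A)\,\prod_{i=1}^s x_i / \prod_{j=1}^s y_j$ up to a unit, so it vanishes precisely along $\bigcup_{i=1}^s V(x_i)$, whence each $x_i$ lies in the radical $B(K^*/R_0)=\sqrt{D(K^*/R_0)}$ and $\prod_{i=1}^s x_i \in B(K^*/R_0)$. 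The main technical obstacle is Step 1: arranging simultaneously the monomial form \eqref{eqA2}, the $\ZZ$-basis (not merely $\QQ$-basis) property for the value groups, the residue-field basis $g_1,\ldots,g_f$, and the completion-level structure \eqref{eqA3}--\eqref{eqA5}, with later monoidal transforms preserving features obtained earlier. This compatibility is the central technical content of \cite{C} and \cite{CG}; once it is in hand, the remainder of Theorem \ref{Theorem1} is essentially bookkeeping.
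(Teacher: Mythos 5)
Your overall architecture (monomialize via \cite{C}, then blow up further to capture residue fields and the completion-level primes via \cite{CG}, then do bookkeeping) matches the paper's in spirit, but three of your "bookkeeping" steps contain genuine gaps. First, you cannot arrange that $\{\nu(x_1),\ldots,\nu(x_s)\}$ and $\{\nu^*(y_1),\ldots,\nu^*(y_s)\}$ are $\ZZ$-bases of $\Gamma_{\nu}$ and $\Gamma_{\nu^*}$: a rank one valuation of rational rank $s$ need not have a finitely generated value group (e.g.\ $\Gamma_{\nu}=\frac{1}{3^{\infty}}\ZZ$ as in Example \ref{Ex2}), so no finite blowing up produces such bases. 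Hence your derivation of (\ref{eqA11}) and of $|{\rm Det}(A)|=e$ collapses; these facts are genuinely part of the strong monomialization conclusions (the paper gets (\ref{eqA11}) from the proof of Theorem 4.10 of \cite{CP}), where the subtle point is the ``only if'' direction that $\sum b_j\nu^*(y_j)\in\Gamma_{\nu}$ forces $b\in A^t\ZZ^s$. Second, the degrees $[{\rm QF}(\hat S):{\rm QF}(\hat R)]=ef$ and (\ref{eqA9}) do not follow from Proposition \ref{PropQ3}, which concerns quotient fields of the associated graded rings of the valuation rings, not quotient fields of completions of the local rings; extensions of valuations to completions behave badly (this is exactly the role of $P(\hat{\ })_{\infty}$), and the paper proves (\ref{eqA9}) by a separate argument: the ring $R'$ of Theorem 6.1 \cite{CP} is chosen so that splitting groups of the local rings agree with those of the valuations, then $T_1=K^s(\nu'/\nu)\cap T'$ and $T_2=K^s(\nu'/\nu^*)\cap T'$ are unramified over $R_0$ and $T$ with trivial residue extension, giving $\hat T_1=\hat R_0$, $\hat T_2=\hat T$, and the degree is computed via the Corollary to Theorem 25 of \cite{ZS2}, Proposition 1 of \cite{LU}, and multiplicativity of $e$ and $f$ (defect zero in characteristic zero).

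Third, the branch ideal claim cannot be obtained from a Jacobian computation on (\ref{eqA2}). That computation only concerns the single local ring $S$ at the center of $\nu^*$, whereas $B(K^*/R_0)=\sqrt{D(K^*/R_0)}$ detects ramification at every height one prime of the full integral closure of $R_0$ in $K^*$, including primes not contained in the center, which are invisible to $S$; so unramifiedness of $R_0\rightarrow S$ away from $x_1\cdots x_s$ does not give $\prod_{i=1}^s x_i\in B(K^*/R_0)$. The paper instead principalizes $B(K^*/R_1)=(f)$ by purity of the branch locus at an earlier regular stage, monomializes $f$ using Theorems 4.7, 4.8 and 4.10 of \cite{C} so that $f$ is a unit times a monomial in $x_1(2),\ldots,x_s(2)$, and then observes that all subsequent transformations are CTUTS in $m\ge s$ variables, so the containment of the branch locus in the monomial divisor persists down to $R_0$. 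Smaller issues: your definition $R=(S\cap K)_{m_S\cap K}$ and the uniqueness assertion are unproved (the paper's $R$ is the explicit toric-type ring, the localization of the integral closure of $R_0[f_1,\ldots,f_s]$ with $f_j$ the adjoint-matrix monomials, cf.\ Theorem 4.2 \cite{CP}), and (\ref{eqA12}) is not automatic for an integral extension but comes from the argument of Theorem 6.5 \cite{CG}, which applies because $R_0$ dominates the ring $\tilde R$ of Theorem 6.4 \cite{CG}.
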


\begin{proof}
Let $R'$ be the algebraic local ring of $K$ constructed in  Theorem 6.1 \cite{CP}, and let $\tilde R$ be the algebraic local ring of $K$ constructed in Theorem 6.4 \cite{CG}, with $\lambda=\lambda_{V_{\nu}}$ as defined before Theorem 6.4 \cite{CG}.

We begin (as in the proof of Theorem 5.1 \cite{C}) by constructing a commutative diagram
$$
\begin{array}{lll}
R_1&\rightarrow &S_1\\
\uparrow&&\uparrow\\
R^*&\rightarrow &S^*
\end{array}
$$
where $R_1$ and $S_1$ are regular local rings which are dominated by $\nu^*$ such that $S_1$ dominates $R_1$, the vertical arrows are products of monoidal transforms, $V_{\nu^*}/m_{\nu^*}$ is algebraic over $S_1/m_{S_1}$ and $R_1/m_{R_1}$, there is a regular system of parameters $x_1(1),\ldots,x_n(1)$ in $R_1$ and a regular system of parameters $y_1(1),\ldots,y_n(1)$ in $S_1$, an $s\times s$ matrix $A(1)=(a_{ij}(1))$ of nonnegative integers with ${\rm Det}(A(1))\ne 0$  and units $\delta_i(1)\in S_1$ for $1\le i\le s$ such that $x_i(1)=\delta_i(1)\prod_{j=1}^sy_j(1)^{a_{ij}(1)}$ for $1\le i\le s$, and $R_1$ dominates both $R'$ and $\tilde R$.

Let $T_1$ be the integral closure of $R_1$ in $K^*$ As $R_1$ is regular, $B(K^*/R_1)$ is a principal ideal by the Purity of the Branch Locus. Let $f$ be such that   $B(K^*/R_1)=(f)$.  By   Theorems 4.7, 4.8 and 4.10 of \cite{C},
 there exists a commutative diagram
$$
\begin{array}{lll}
R_2&\rightarrow &S_2\\
\uparrow&&\uparrow\\
R_1&\rightarrow &S_1
\end{array}
$$
where $R_2$ and $S_2$ are regular local rings which are dominated by $\nu^*$ such that $S_2$ dominates $R_2$, 
the vertical arrows are products of monoidal transforms,  there is a regular system of parameters $x_1(2),\ldots,x_n(2)$ in $R_2$ and a regular system of parameters $y_1(2),\ldots,y_n(2)$ in $S_2$, an $s\times s$ matrix $A(2)=(a_{ij}(2))$ of nonnegative integers with ${\rm Det}(A(2))\ne 0$  and units $\delta_i(2)\in S_2$ for $1\le i\le s$ such that $x_i(2)=\delta_i(2)\prod_{j=1}^sy_j(2)^{a_{ij}(2)}$ for $1\le i\le s$, and 
there exists  a  unit $\epsilon(2)\in R_2$ and $d_i(2)\in \ZZ_{\ge 0}$ for $1\le i\le s$ such that 
$$
f=\epsilon(2)\prod_{i=1}^sx_i(2)^{d_{i}(2)}.
$$
Now proceeding as in the proof of Theorem 6.5 \cite{CG}, we next construct a diagram
$$
\begin{array}{lll}
R_3&\rightarrow &S_3\\
\uparrow&&\uparrow\\
R_2&\rightarrow &S_2
\end{array}
$$
such that the conclusions of Theorem 5.1 \cite{C} hold (we obtain an expression (\ref{eqA2}) with ${\rm Det}(A)\ne 0$), and then  construct a diagram 
$$
\begin{array}{lll}
R_0&\rightarrow &S\\
\uparrow&&\uparrow\\
R_2&\rightarrow &S_2
\end{array}
$$
such that the conclusions of Theorem 6.5 \cite{CG} hold. This is possible since $R_2$ dominates $\tilde R$.
Since $R$ dominates $R'$, the conclusions of Theorem 6.1 \cite{CP} hold. The isomorphism (\ref{eqA11}) is explained in the proof of Theorem 4.10 \cite{CP}).

The argument of the proof of Theorem 6.1 \cite{CP} shows that $[T/m_T:R_0/m_{R_0}]=f$ and $\{g_1,\ldots,g_f\}$ is a basis of $T/m_T$ over $R_0/m_{R_0}$ (since $R_0$ dominates $R'$). 

All transformations in the construction above $R_1\rightarrow S_1$ are CTUTS (page 29 \cite{C}) in $m\ge s$ variables (page 49 \cite{C}) so the branch ideal of $R_0\rightarrow K^*$ contains $\prod_{i=1}^sx_i$. The argument of the proof of Theorem 6.5 \cite{CG} shows that (\ref{eqA12}) holds (since $R_0$ dominates $\tilde R$).

It remains to show that  $[{\rm QF}(\hat T):{\rm QF}(\hat R_0)]=ef$. The ring $R'$ from Theorem 6.1 \cite{CP} was constructed in that proof so that it has the following property. There is a Galois closure $K'$ of $K^*/K$ with extension $\nu'$ of $\nu^*$ to $K'$ such that if $T'$ is the localization of the integral closure of $T$ in $K'$  at the center of $\nu'$, then we have equality of splitting groups
\begin{equation}\label{eqA1}
G^s(T'/T)=G^s(\nu'/\nu^*)\mbox{ and }G^s(T'/R_0)=G^s(\nu'/\nu).
\end{equation}
By the Corollary to Theorem 25, page 78 \cite{ZS2}, we have that 
$$
[K':K^s(\nu'/\nu)]=e(\nu'/\nu)f(\nu'/\nu)\mbox{ and }
[K':K^s(\nu'/\nu^*)]=e(\nu'/\nu^*)f(\nu'/\nu^*).
$$
Let $T_1=K^s(\nu'/\nu)\cap T'$ and $T_2=K^s(\nu'/\nu^*)\cap T'$.
$R_0\rightarrow T_1$ and $T\rightarrow T_2$ are unramified with
$T_1/m_{T_1}=R_0/m_{R_0}$ and $T_2/m_{T_2}=T/m_T$ by equation (\ref{eqA1}) and Theorem 1.47 \cite{RTM}.
Thus $\hat T_1=\hat R_0$ and $\hat T_2=\hat T$ by (10.14) and (10.1) \cite{RES}. We have that
$$
[K':K^s(\nu'/\nu)]=[{\rm QF}(\hat T_1):{\rm QF}(\hat R_0)]\mbox{ and }
[K':K^s(\nu'/\nu^*)]=[{\rm QF}(\hat T_2):{\rm QF}(\hat T)]
$$
by II of Proposition 1 (page 498) \cite{LU}, since there is a unique local ring in $K'$ lying above $T_1$
and a unique local ring in $K'$ lying above $T_2$.
Finally, we have that
$$
[{\rm QF}(\hat T):{\rm QF}(\hat R_0)]=\frac{[{\rm QF}(\hat T'):{\rm QF}(\hat R_0)]}{[{\rm QF}(\hat T'):{\rm QF}(\hat T)]}=e(\nu^*/\nu)f(\nu^*/\nu)
$$
since $e$ and $f$ are multiplicative.
\end{proof}

We will now give the proof of Theorem \ref{Theorem2}. Taking $S^*$ to be the localization of the integral closure of $R^*$ in $K^*$ at the center of $\nu^*$, we may assume that the
 assumptions and conclusions of Theorem \ref{Theorem1} (with the additional assumptions that $k$ is algebraically closed and $V_{\nu^*}/m_{\nu^*}=k$, so that $f=1$). 

We have  a canonical  immediate extension of $\nu^*$ to the the quotient field of $\hat S/p(\hat S)_{\infty}$ which dominates $\hat S/p(\hat S)_{\infty}$, and identifying this extension with $\nu^*$, we can define an extension $\hat \nu^*$ of $\nu$ to the quotient field of $\hat S$ which dominates $\hat S$ with value group $\ZZ^{n-\lambda}\times\Gamma_{\nu^*}$
in the lex order. The value $\hat\nu^*(f)$ for $0\ne f\in \hat S$ is defined as follows.
Let $m_1$ be such that $h_1^{m_1}$ is the largest power of $h_1$ which divides $f$ in $\hat S$. Let $f_1$ be the residue of $\frac{f}{h_1^{m_1}}$ in the regular local ring
$\hat S/h_1\hat S$. Let $m_2$ be the largest power of (the residue of) $h_2$ in $\hat S/h_1\hat S$ such that $h_2^{m_2}$ divides $f_1$ in $\hat S/h_1\hat S$. Continue this way constructing $f_1,\ldots,f_{n-\lambda}$ and $m_1,\ldots,m_{n-\lambda}$, where $f_{n-\lambda}$ is the residue of $f_{n-\lambda+1}$ in $\hat S/p(\hat S)_{\infty}$.
Define 
\begin{equation}\label{eqA31}
\hat\nu^*(f)=(m_1,\ldots,m_{n-\lambda},\nu^*(f_{n-\lambda}))\in \ZZ^{n-\lambda}\times \Gamma_{\nu^*}\mbox{ in the lex order}.
\end{equation}
There exists $z_1,\ldots,z_s\in \hat S$ and units $\epsilon_i\in \hat S$ such that $z_i=\epsilon_iy_i$ and
$x_i=z_1^{a_{i1}}\cdots z_s^{a_{ss}}$ for $1\le i\le s$. Define $z_i=y_i$ for $s+1\le i\le n$.

The following lemma can also be established by the argument before (\ref{eqA40}).

\begin{Lemma}\label{LemmaA23} There exist Laurent monomials  $N_1,\ldots,N_{\ell}$ in $x_1,\ldots,x_n$ such that $\hat R=k[[N_1,\ldots,N_{\ell}]]$.
\end{Lemma}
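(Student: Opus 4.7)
The plan is to describe $\hat R$ as the completion of a monoid algebra $k[M]$ for a saturated submonoid $M\subset\ZZ^n$, and then to apply Gordan's lemma to extract a finite Laurent-monomial generating set. I expect that $R\ne R_0$ in general (even though $R_0\subseteq R$), so the lemma is genuinely more than the tautology $\hat R_0=k[[x_1,\ldots,x_n]]$; the rational rank $s$ matrix $A$ forces $R$ to contain nontrivial Laurent monomials such as $x^{m}$ for $m$ in the kernel lattice of $\ZZ^n\to\ZZ^s/A^t\ZZ^s$.

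By Theorem \ref{Theorem1} we have $R_0\subseteq R\subseteq S$, hence $\hat R_0\subseteq\hat R\subseteq\hat S=k[[y_1,\ldots,y_n]]$. The structure equations (\ref{eqA2}), together with ${\rm Det}(A)\ne 0$, express each $y$-monomial $y^\ell$ (for $\ell\in\ZZ^n$) as a unit of $\hat S$ times a Laurent monomial $x^{m(\ell)}$ in $x_1,\ldots,x_n$, where $m(\ell)$ is obtained by applying $A^{-1}$ rationally in the first $s$ coordinates (with identity in the remaining $n-s$) and has integer entries precisely when the image of $\ell$ in $\Gamma_{\nu^*}/\Gamma_\nu\cong\ZZ^s/A^t\ZZ^s$ (via (\ref{eqA11})) is trivial. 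The set of such $\ell$ forms a saturated submonoid $M\subset\ZZ^n$, and $\hat R$ is identified with the completion of the monoid algebra $k[M]$. This identification rests on the uniqueness clause of Theorem \ref{Theorem1}: the monomial subring so defined is normal, dominated by $\nu$, and has $S$ as the localization at the center of $\nu^*$ of its integral closure in $K^*$ (the last point using that $S$ is regular and integral over the enlarged monomial subring), so it must coincide with $R$.

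Applying Proposition \ref{SG1} (Gordan's lemma), $M$ is finitely generated as a monoid. Choosing monoid generators $\mu_1,\ldots,\mu_\ell$ and setting $N_i=x^{m(\mu_i)}$ (absorbing the unit $y^{\mu_i}/x^{m(\mu_i)}\in\hat R$ into the completion), we obtain $\hat R=k[[N_1,\ldots,N_\ell]]$, with each $N_i$ a Laurent monomial in $x_1,\ldots,x_n$.

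The main obstacle is the identification of $\hat R$ with the completion of $k[M]$: pinning down exactly which power series in $\hat S$ lie in $\hat R$ (rather than in the strictly smaller $\hat R_0$) requires combining the valuation-theoretic description of $\hat\nu^*$ on $\hat S$ with the uniqueness of $R$, and is the nontrivial core of the argument. Once $\hat R$ is described as a monomial subring, the extraction of finitely many Laurent-monomial generators via Gordan's lemma is purely combinatorial.
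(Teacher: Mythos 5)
Your overall strategy (describe $\hat R$ as a completed monomial algebra, then extract finitely many Laurent-monomial generators by the toric results Proposition \ref{SG1}/Lemma \ref{SG2}) has the same combinatorial endgame as the paper's proof, but the decisive step is exactly the one you leave open, and your sketch of it does not go through as stated. The paper does not deduce the monomial description of $R$ from the shape of the equations (\ref{eqA2}) or from the uniqueness clause of Theorem \ref{Theorem1}; it quotes Theorem 4.2 of \cite{CP}, which says that $R$ is the localization at the center of $\nu$ of the integral closure of $R_0[f_1,\ldots,f_s]$, where $f_j=\prod_i x_i^{b_{ij}}$ and $(b_{ij})$ is the adjoint matrix of $A$. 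With that structural input, Lemma \ref{SG2} enlarges $f_1,\ldots,f_s$ to Laurent monomials $f_1,\ldots,f_{\ell}$ generating the integral closure of the monomial algebra, and the localized ring $D=R_0[f_1,\ldots,f_{\ell}]_{(\ldots)}$ is shown to be normal by passing to the completion (it has the same completion as the localized monomial algebra $C$, and normality transfers by excellence, (v) of IV.7.8.3 \cite{EGA}), whence $D=R$ by Zariski's Main Theorem (10.8) \cite{RES} and $\hat R=\hat C=k[[N_1,\ldots,N_\ell]]$. You have no substitute for this input from \cite{CP}, and you acknowledge that the identification of $\hat R$ with the completion of $k[M]$ is "the nontrivial core" left unproved; but that identification is the entire content of the lemma.

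Your proposed fallback, invoking the uniqueness statement of Theorem \ref{Theorem1} for the subring generated by the Laurent monomials $x^{m(\ell)}$, would require verifying that $S$ is the localization at the center of $\nu^*$ of the integral closure of that monomial ring in $K^*$. You justify this by saying $S$ is "integral over the enlarged monomial subring," but this is not established: the relation $y_1^{e}=(\mbox{unit of }S)\cdot(\mbox{Laurent monomial in the }x_i)$ is not an equation of integral dependence over the monomial subring, since the unit $\delta$ does not lie in it (this is precisely why the paper replaces $y_i$ by $z_i=\epsilon_i y_i\in\hat S$ before making any monomial computations, and why its monomial analysis takes place in $\hat S$ and $\hat T$ rather than in $R$); moreover $S$ is only a localization of an integral closure of $R$, so it is not integral over $R$ itself. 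Similarly, the parenthetical "absorbing the unit $y^{\mu_i}/x^{m(\mu_i)}\in\hat R$" assumes a membership that is not known at this stage (in the paper, facts such as $z_i^e\in\hat R$ are again imported from \cite{CP}). So the finite-generation step you carry out is the easy half; the gap is the missing proof that $R$ (equivalently $\hat R$) is a (completed) monomial algebra over $R_0$, which the paper obtains from Theorem 4.2 of \cite{CP} together with the normality-via-completion and Zariski Main Theorem argument.
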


\begin{proof} As shown in Theorem 4.2 \cite{CP}, $R$ is the localization of the integral closure of $R_0[f_1,\ldots,f_s]$ in $K$ at the center of $\nu$, where $f_j=\prod_{j=1}^sx_j^{b_{ij}}$, with $(b_{ij})$ the adjoint matrix of $A=(a_{ij})$. By Lemma \ref{SG2}, we can extend $f_1,\ldots,f_s$ to a set of Laurent monomials $f_1,\ldots,f_{\ell}$ ($\ell\ge s$) such that $k[x_1,\ldots,x_n,f_1,\ldots,f_{\ell}]$ is the integral closure of $k[x_1,\ldots,x_n,f_1,\ldots,f_{s}]$ in $k(x_1,\ldots,x_n)$. Let $$
C=k[x_1,\ldots,x_n,f_1,\ldots,f_{\ell}]_{(x_1,\ldots,x_n,f_1,\ldots,f_{\ell})}\mbox{ and }D=R_0[x_1,\ldots,x_n,f_1,\ldots,f_{\ell}]_{(x_1,\ldots,x_n,f_1,\ldots,f_{\ell})}.
$$
Since $\widehat{k[x_1,\ldots,x_n]_{(x_1,\ldots,x_n)}}=\hat R_0$, $\hat C=\hat D$. Since $C$ and $D$ are excellent, by (v) of IV.7.8.3 \cite{EGA} $C$ normal implies $\hat C=\hat D$ is normal and so $D$ is normal. Thus $D=R$ by Zariski's main theorem (10.8) \cite{RES}.

\end{proof}

The next part of the proof (through the paragraph before (\ref{eqA30})) is as in the proofs of Theorems 2 and 3 of \cite{RAF}.

Let $K'$ be a Galois closure of $K^*/K$. Then $B(K'/R_0)=B(K^*/R_0)$ (by Proposition 1 \cite{RAF}), which contains
$x_1\cdots x_s$, by the last statement of Theorem \ref{Theorem1}.

Let $\nu'$ be an extension of $\nu^*$ to $K'$ and let $R'$ be the center of $\nu'$ on the integral closure of $R_0$ in $K'$. Let $K_s$ be the splitting field of $R'$ over $R_0$ and $K_s^*$ be the splitting field of $R'$ over $T$. Let
$\overline K'$ be the Galois closure of $K_s^*$ over $K_s$ in $K'$. Let $R_s$ be the localization of the integral closure of $R_0$ in $K_s$ at the center of $\nu'$ and let $T_s$ be the localization of the integral closure of the integral closure of $T$ in $K_s'$ at the center of $\nu'$. Now $x_1\cdots x_s\in B(K'/R_s)$ and so $x_1\cdots x_s\in B(\overline K'/R_s)$ (since $B(K'/R_s)\subset B(\overline K'/R_s)$).

We have that $\hat R_0\cong \hat R_s$ and $\hat T\cong \hat T_s$ (Theorem 1.47 \cite{RTM} and (10.14), (10.1) \cite{RES}).
For $1\le j\le s$, set $n_j=r(A_j,(R_s)_{(x_j)})$, where $A_j$ is any local ring of the integral closure of $R_s$ in the Galois extension $\overline K'/K_s$ which dominates $(R_s)_{(x_j)}$. 

Let $K_1=K_s(x_1^{\frac{1}{n_1}},\ldots,x_s^{\frac{1}{n_s}})$ and $K_2=\overline K'(x_1^{\frac{1}{n_1}},\ldots,x_s^{\frac{1}{n_s}})$. 
Let $\overline\nu'$ be an extension of $\nu'$ to $K_2$. Let $R_1$ be the localization of the integral closure of
$R_s$ in $K_1$ at the center of $\overline\nu'$. By Lemma 6 \cite{RAF}, $R_1$ is unramified in $K_2$ in codimension 1. Now $R_1$ is a regular local ring with $m_{\hat R_1}=(x_1^{\frac{1}{n_1}},\ldots,x_s^{\frac {1}{n_s}},x_{s+1},\ldots,x_n)$ (by Proposition 1 \cite{LU} and Lemma 5 \cite{RAF}). Thus $R_1\rightarrow K_2$ is unramified by the purity of the branch locus (Theorem 1 \cite{RAF}). We have that $\hat R_1=k[[x_1^{\frac{1}{n_1}},\ldots,x_s^{\frac{ 1}{n_s}},x_{s+1},\ldots,x_n]]$. Let $R_2$ be the localization of the integral closure of $R_1$ in $K_2$ at the center of $\overline\nu'$. Then $\hat R_1\cong\hat R_2$ since $R_1$ is unramified in $K_2$.   Let
$$
F_0={\rm QF}(\hat R_0)=k((x_1,\ldots,x_n))\mbox{ and }E_0={\rm QF}(\hat T).
$$
Let
$$
L_0={\rm QF}(\hat R_2)=k((x_1^{\frac{1}{n_1}},\ldots,x_s^{\frac{ 1}{n_s}},x_{s+1},\ldots,x_n)).
$$
$L_0$ is a Galois extension of $F_0$, with abelian Galois group $G(L_0/F_0)\cong \bigoplus_{i=1}^s\ZZ_{n_i}$, acting diagonally on  $x_i^{\frac{1}{n_i}}$ by multiplication by $n_i$-th roots of unity. 

Thus 
$E_0$ is Galois over $F_0$ and 
$E_0$ has a basis over $F_0$ of the form
\begin{equation}\label{eqA30}
\{M_j=\prod_{j=1}^sx_i^{\frac{b_{ij}}{n_j}}\}\mbox{ for }1\le j\le e
\end{equation}
with all $b_{ij}\in \ZZ_{\ge 0}$. We have that $[E_0:F_0]=e$ by (\ref{eqA9}). Recall that $R$ is defined before equation (\ref{eqA6}) in 
Theorem \ref{Theorem1}.
Let
$$
F_1={\rm QF}(\hat R)\mbox{ and }E_1={\rm QF}(\hat S)=k((z_1,\ldots,z_n)).
$$
We have that $E_1=F_1(K^*)$ and $E_0=F_0(K^*)$ (by Proposition 1 \cite{LU}). Thus $M_1,\ldots,M_e$ generate $E_1$ over $F_1$. Since $e=[E_1:F_1]$ (by (\ref{eqA6})), $\{M_1,\ldots,M_e\}$ is a basis of $E_1$ over $F_1$. We have that $z_i$ is a Laurent monomial in $x_1^{\frac{1}{e}},\ldots,x_s^{\frac{1}{e}}$ for $1\le i\le s$, so by Lemma \ref{LemmaA23},  for each $1\le i\le s$, we have an expression
\begin{equation}\label{eq30}
z_i=N_iM_{\sigma(i)}
\end{equation}
where $N_i$ is a Laurant monomial in $x_1,\ldots,x_n$ and $\sigma(i)\in \{1,\ldots,e\}$.
In particular, $z_1,\ldots,z_n\in E_0$.

Let $a_i=(a_{i1},\ldots,a_{is},0,\ldots,0)$ for $1\le i\le s$ (where $A=(a_{ij})$)and let $e_j$ be the row vector of length $n$ with a 1 in the $j$-th place and zeros everywhere else. Let $M$ be the submonoid of $\ZZ^n$ generated by 
$a_1,\ldots, a_s, a_{s+1}=e_{s+1},\ldots,a_n=e_n$. 
$k[z^v\mid v\in \tilde M_{\ZZ^n}]$ is the integral closure of $k[z^v\mid v\in M]=k[x_1,\ldots,x_n]$ in $k[z_1,\ldots,z_n]$ by Lemma \ref{SG2}.

$\hat T$ is integrally closed since $T$ is. Let $A=k[z^{\nu}\mid \nu \in \tilde M_{\ZZ^n}]\subset k(z_1,\ldots,z_n)\subset E_0$. All elements of $A$ are integral over $k[x_1,\ldots,x_n]$ and hence over $\hat R_0=k[[x_1,\ldots,x_n]]$. Thus $A\subset \hat T$.

Let $m_A$ be the maximal ideal of $A$ generated by the monomials of $A$ of positive degree (in $z_1,\ldots,z_n$). 
$\hat T$ is the invariant ring of the action of the Galois group $G(L_0/E_0)$ on $k[[x_1^{\frac{1}{n_1}},\ldots,x_s^{\frac{1}{n_s}},x_{s+1},\ldots,x_n]]$, so the maximal ideal of $\hat T$ is generated by monomials of positive degree in $x_1^{\frac{1}{n_1}},\ldots,x_s^{\frac{1}{n_s}},x_{s+1},\ldots, x_n$.
Thus $m_A\subset M_{\hat T}$ by (\ref{eq30}). In particular, the completion $\hat A$ at the maximal ideal $m_A$ is contained in $\hat T$. $\hat A$ is integrally closed since $A$ is. 

With the notation of Proposition \ref{SG1}, let $\Lambda = \ZZ^n\cap {\rm par}(a_1,\ldots,a_n)$. We have that 
$$
|\Lambda|=[\ZZ^s:A\ZZ^s]=|{\rm Det}(A)|=e
$$
by c) of Proposition \ref{SG1} and equation (\ref{eqA6}). Index the elements of $\{z^v\mid v\in \Lambda\}$ as
$w_1=1,w_2,\ldots,w_e$. We have that $A=\bigoplus_{i=1}^ek[x_1,\ldots,x_n]w_i$ as a $k[x_1,\ldots,x_n]$-module by a) and b) of Proposition \ref{SG1}. Completing with respect to the maximal ideal $(x_1,\ldots,x_n)$ of $k[x_1,\ldots,x_n]$, we have that
$$
A\otimes_{k[x_1,\ldots,x_n]}\hat R_0\cong \bigoplus_{i=1}^ew_i\hat R_0.
$$
We have that $\sqrt{(x_1,\ldots,x_n)A}=m_A$, so that completion of $A$ with respect to $m_A$ is
$\hat A\cong A\otimes_{k[x_1,\ldots,x_n]}\hat R_0$ (Theorem 16, page 277 \cite{ZS2}). Now $w_1,\ldots,w_e$ is a basis of ${\rm QF}(\hat A)$ over ${\rm QF}(\hat R_0)=F_0$, so $[{\rm QF}(\hat A):F_0]=e$.
As ${\rm QF}(\hat A)\subset E_0$ and $[E_0:F_0]=e$, we have that ${\rm QF}(\hat A)=E_0$. As $\hat A$ is integrally closed, we have that $\hat A=\hat T$. Thus
$$
\hat T=\bigoplus_{i=1}^ew_i\hat R_0.
$$

Recall the extension $\hat\nu^*$ of $\nu^*$ to the quotient field $E_1$ of $\hat S$ defined by (\ref{eqA31}). Notice that $\Gamma_{\hat{\nu}^*}=\ZZ^{n-\lambda}\times \Gamma_{\nu^*}$ contains $\Gamma_{\nu^*}$ as an isolated subgroup (page 40 \cite{ZS2}).

\begin{Lemma}\label{LemmaA22} We have that $\hat\nu^*(w_i)\in \Gamma_{\nu^*}$ for $1\le i\le e$ and 
$\hat\nu^*(w_i)$ for $1\le i\le e$ are a complete set of representatives of the cosets of $\Gamma_{\nu}$ in $\Gamma_{\nu^*}$. 
\end{Lemma}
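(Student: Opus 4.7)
The plan is to combine the description of $\hat\nu^*$ from~\eqref{eqA31}, the explicit form of the basis $w_1,\ldots,w_e$ given by Proposition~\ref{SG1}, and the isomorphism~\eqref{eqA11} expressing $\Gamma_{\nu^*}/\Gamma_\nu$ in terms of $A$.

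First I would observe that each $w_i$ is a monomial in $z_1,\ldots,z_s$ alone, with no $z_{s+1},\ldots,z_n$ appearing. Indeed, $v_i\in \Lambda = \ZZ^n\cap {\rm par}(a_1,\ldots,a_n)$, and because $a_{s+j}=e_{s+j}$, a combination $q_{s+1}e_{s+1}+\cdots+q_ne_n$ with $0\le q_j<1$ is integral only when all $q_{s+j}=0$. Writing $v_i=(v_{i1},\ldots,v_{is},0,\ldots,0)$, I would next check that $w_i\notin p(\hat S)_\infty$, so that the integers $m_1,\ldots,m_{n-\lambda}$ in~\eqref{eqA31} vanish on $w_i$. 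By Theorem~\ref{Theorem1}, $p(\hat S)_\infty=(h_1,\ldots,h_{n-\lambda})$ with $h_i\equiv y_{s+i}\bmod m_{\hat S}^2$, so $\bar y_1,\ldots,\bar y_s$ form part of a regular system of parameters for the regular local ring $\hat S/p(\hat S)_\infty$; since $z_j=\epsilon_j y_j$ for $1\le j\le s$ with $\epsilon_j$ a unit in $\hat S$, the element $w_i$ is a unit times a monomial in $y_1,\ldots,y_s$ and survives in $\hat S/p(\hat S)_\infty$. Consequently
\[
\hat\nu^*(w_i)=\nu^*(\bar w_i)=\sum_{j=1}^s v_{ij}\,\nu^*(y_j)\in \Gamma_{\nu^*},
\]
which establishes the first assertion of the lemma.

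For the coset statement I would invoke Proposition~\ref{SG1} inside $\ZZ^s$. The subgroup $U\subset \ZZ^s$ generated by the first $s$ components of $a_1,\ldots,a_s$ is exactly the row lattice of $A$, that is, $U=A^t\ZZ^s$. Since $\det A\ne 0$ the saturation $\tilde M_{\ZZ^s}$ of the monoid $M$ generated by those vectors is all of $\ZZ^s$, so parts a) and c) of Proposition~\ref{SG1} yield $|\Lambda|=[\ZZ^s:U]=|\det A|=e$ together with a decomposition $\ZZ^s=\bigsqcup_{\lambda\in \Lambda}(\lambda+M)$. A routine check using $U=M-M$ and the disjointness in part b) shows that two distinct elements of $\Lambda$ cannot be congruent modulo $U$, so $\Lambda$ is a complete set of coset representatives for $U$ in $\ZZ^s$.

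Transporting this through the isomorphism~\eqref{eqA11}, whose kernel is precisely $A^t\ZZ^s=U$, the $s$-tuples $(v_{i1},\ldots,v_{is})$ map to a complete set of representatives of $\Gamma_\nu$ in $\Gamma_{\nu^*}$; combined with the formula $\hat\nu^*(w_i)=\sum_j v_{ij}\,\nu^*(y_j)$ from the previous step, so do the $\hat\nu^*(w_i)$. The one place requiring care is promoting the disjointness of the monoidal cosets $\lambda+M$ to disjointness of the group-theoretic cosets $\lambda+U$, but this is automatic once one writes a general element of $U$ as $m-m'$ with $m,m'\in M$.
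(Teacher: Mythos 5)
Your proof is correct, but it obtains the key distinctness statement by a somewhat different mechanism than the paper. The paper argues: if $\hat\nu^*(w_i)-\hat\nu^*(w_j)\in\Gamma_{\nu}$ for $i\ne j$, then by (\ref{eqA11}) the quotient $w_i/w_j$ is a Laurent monomial $x_1^{f_1}\cdots x_s^{f_s}$, and clearing denominators gives $\left(\prod_{f_l>0}x_l^{f_l}\right)w_j=\left(\prod_{f_l<0}x_l^{-f_l}\right)w_i$, which contradicts the fact, established just before the lemma, that $w_1,\ldots,w_e$ are a free basis of $\hat T$ over $\hat R_0$; completeness then follows by counting, since $e=|\Gamma_{\nu^*}/\Gamma_{\nu}|$. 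You instead stay entirely in the exponent lattice: a congruence $a\equiv b$ modulo $U=A^t\ZZ^s$ with $a\ne b$ in $\Lambda$ gives $a+m'=b+m$ with $m,m'\in M$, contradicting part b) of Proposition \ref{SG1}, and you then count using part c), $|\Lambda|=[\QQ U\cap\ZZ^s:U]=[\ZZ^s:U]=|{\rm Det}(A)|=e$. Since the freeness of $\hat T$ over $\hat R_0$ was itself deduced from parts a) and b) of Proposition \ref{SG1}, the two arguments amount to the same computation; yours is self-contained at the lattice level and never invokes the module structure of $\hat T$, while the paper's re-uses the freeness statement that the theorem needs anyway. Your expanded justification that $\hat\nu^*(w_i)\in\Gamma_{\nu^*}$ (the integers $m_1,\ldots,m_{n-\lambda}$ in (\ref{eqA31}) vanish because $w_i$ is a unit times a monomial in $y_1,\ldots,y_s$ and hence is nonzero in the regular local ring $\hat S/p(\hat S)_{\infty}$) fills in what the paper asserts in one line, and is fine. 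One small correction: it is not true that $\tilde M_{\ZZ^s}=\ZZ^s$, nor that $\ZZ^s=\bigsqcup_{\lambda\in\Lambda}(\lambda+M)$ — the saturation of the monoid is only the set of lattice points of the cone $\RR_{\ge 0}M$, which lies in the positive orthant; what you actually need, and do use, is the lattice fact $\QQ U\cap\ZZ^s=\ZZ^s$ (valid because ${\rm Det}(A)\ne 0$) together with parts b) and c) of Proposition \ref{SG1}, so this slip does not affect the argument but the two sentences asserting it should be deleted.
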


\begin{proof} Each $w_i$ is a monomial in $z_1,\ldots,z_s$ by construction, so $\hat\nu^*(w_i)\in \Gamma_{\nu^*}$ for $1\le i\le e$ (since $\hat\nu^*(z_i)=\hat\nu^*(y_i)$ for all $i$). Write
\begin{equation}\label{eqA22}
w_i=z_1^{c_{i1}}\cdots z_s^{c_{is}}
\end{equation}
for $1\le i\le e$. Suppose $\hat\nu^*(w_i)-\hat\nu^*(w_j)\in \Gamma_{\nu}$ for some $i\ne j$. Then
$$
\hat\nu^*(\frac{w_i}{w_j})\in \Gamma_{\nu}
$$
implies 
$$
\frac{w_i}{w_j}=x_1^{f_1}\cdots x_s^{f_s}
$$
for some $f_1,\ldots,f_s\in \ZZ$
by (\ref{eqA11}). Thus 
$$
\left(\prod_{f_l>0}x_l^{f_l}\right)w_j=\left(\prod_{f_l<0}x_l^{-f_l}\right)w_i.
$$ 
But this is impossible since the $w_i$ are a free basis of $\hat T$ over $\hat R_0$. Thus the classes of $\hat \nu^*(w_i)$ in $\Gamma_{\nu^*}/\Gamma_{\nu}$ are all distinct. 

The lemma now follows since $e=|\Gamma_{\nu^*}/\Gamma_{\nu}|$.
\end{proof}

\begin{Lemma}\label{LemmaA21} There are natural isomorphisms of graded rings
$$
{\rm gr}_{\nu}(R_0)\cong \bigoplus_{\gamma\in \Gamma_{\nu}}\mathcal P_{\gamma}(\hat R_0)/\mathcal P_{\gamma}^+(\hat R_0)
$$
and
$$
{\rm gr}_{\nu^*}(T)\cong \bigoplus_{\gamma\in \Gamma_{\nu^*}}\mathcal P_{\gamma}(\hat T)/\mathcal P_{\gamma}^+(\hat T).
$$
\end{Lemma}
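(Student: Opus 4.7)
The plan is to deduce both isomorphisms directly from Lemma \ref{Lemma12} together with the observation that the ideals $\mathcal P_{\gamma}$ and $\mathcal P_{\gamma}^+$ in the completion are pullbacks of the corresponding ideals in $\hat R_0/P(\hat R_0)_{\infty}$ and $\hat T/P(\hat T)_{\infty}$, so that the $P_{\infty}$ primes do not contribute to the quotients. Since $\nu$ has rank one and dominates $R_0$, Lemma \ref{Lemma12} provides a canonical immediate extension $\hat\nu$ of $\nu$ to $\mbox{QF}(\hat R_0/P(\hat R_0)_{\infty})$ dominating $\hat R_0/P(\hat R_0)_{\infty}$; likewise, since $\nu^*$ has rank one and dominates $T$, we have a canonical immediate extension of $\nu^*$ to $\mbox{QF}(\hat T/P(\hat T)_{\infty})$ dominating $\hat T/P(\hat T)_{\infty}$.

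The first step is to extend $\nu$ to a function on all of $\hat R_0$ by declaring $\nu(f)=\infty$ for $f\in P(\hat R_0)_{\infty}$ and $\nu(f)=\hat\nu(\bar f)$ for $f\notin P(\hat R_0)_{\infty}$, where $\bar f$ denotes the image in $\hat R_0/P(\hat R_0)_{\infty}$. With this convention, for every $\gamma\in\Gamma_{\nu}$ both $\mathcal P_{\gamma}(\hat R_0)$ and $\mathcal P_{\gamma}^+(\hat R_0)$ contain $P(\hat R_0)_{\infty}$, and if $\pi\colon \hat R_0\to \hat R_0/P(\hat R_0)_{\infty}$ denotes the projection, then
\[
\mathcal P_{\gamma}(\hat R_0)=\pi^{-1}\bigl(\mathcal P_{\gamma}(\hat R_0/P(\hat R_0)_{\infty})\bigr),\quad \mathcal P_{\gamma}^+(\hat R_0)=\pi^{-1}\bigl(\mathcal P_{\gamma}^+(\hat R_0/P(\hat R_0)_{\infty})\bigr).
\]
Consequently, for each $\gamma\in \Gamma_{\nu}$ the projection $\pi$ induces an isomorphism of $k$-vector spaces
\[
\mathcal P_{\gamma}(\hat R_0)/\mathcal P_{\gamma}^+(\hat R_0)\;\cong\;\mathcal P_{\gamma}(\hat R_0/P(\hat R_0)_{\infty})/\mathcal P_{\gamma}^+(\hat R_0/P(\hat R_0)_{\infty}),
\]
and these isomorphisms assemble into a graded ring isomorphism between the direct sum appearing in the lemma and $\mbox{gr}_{\hat\nu}(\hat R_0/P(\hat R_0)_{\infty})$. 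Combining with Lemma \ref{Lemma12} yields the first isomorphism.

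The second isomorphism is obtained in exactly the same way, with $T$, $\hat T$ and $\nu^*$ replacing $R_0$, $\hat R_0$ and $\nu$. The only observation needed to invoke Lemma \ref{Lemma12} here is that $T$ is a Noetherian local ring (as the localization of a finitely generated $R_0$-algebra) which is dominated by the rank-one valuation $\nu^*$; by (\ref{eqA12}) the ideal $P(\hat T)_{\infty}=P(\hat R_0)_{\infty}\hat T$ is a prime in $\hat T$, and this ensures the canonical extension of $\nu^*$ to $\mbox{QF}(\hat T/P(\hat T)_{\infty})$ is well defined. The same pullback identity $\mathcal P_{\gamma}(\hat T)=\pi^{-1}(\mathcal P_{\gamma}(\hat T/P(\hat T)_{\infty}))$ holds for every $\gamma\in\Gamma_{\nu^*}$, and an application of Lemma \ref{Lemma12} to $T$ produces the required isomorphism ${\rm gr}_{\nu^*}(T)\cong \bigoplus_{\gamma\in\Gamma_{\nu^*}}\mathcal P_{\gamma}(\hat T)/\mathcal P_{\gamma}^+(\hat T)$.

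There is essentially no hard step here; the entire content is bookkeeping about how the $P_{\infty}$ primes sit inside the graded pieces. The one place that requires a moment of care is making sure that the extension $\hat\nu^*$ with enlarged value group $\ZZ^{n-\lambda}\times\Gamma_{\nu^*}$ defined in (\ref{eqA31}) is \emph{not} the valuation used to define $\mathcal P_{\gamma}(\hat T)$ in the statement: here $\gamma$ ranges over $\Gamma_{\nu^*}$ and the ideals are those of the rank-one canonical extension. Once that point is kept straight, the proof is immediate from Lemma \ref{Lemma12}.
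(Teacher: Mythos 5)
Your reduction is sound and it is a genuinely different route from the paper's. The paper proves Lemma \ref{LemmaA21} directly on the completions: for $\gamma\in\Gamma_{\nu}$ it observes $\mathcal P_{\gamma}(\hat R_0)\cap R_0=\mathcal P_{\gamma}(R_0)$ and $\mathcal P^+_{\gamma}(\hat R_0)\cap R_0=\mathcal P^+_{\gamma}(R_0)$, so the natural map of graded pieces is injective, and gets surjectivity by approximation (given $F\in\mathcal P_{\gamma}(\hat R_0)$ of value $\gamma$, choose $m$ with $\gamma<m\nu(m_{R_0})$ and $F'\in R_0$ with $F-F'\in m_{\hat R_0}^m$, so $F'\equiv F$ modulo $\mathcal P_{\gamma}^+(\hat R_0)$); the same argument is then run for $T$. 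You instead factor through the quotients by the $P_{\infty}$ primes and invoke Lemma \ref{Lemma12}, which spares you from repeating that approximation argument (it is exactly the proof of Lemma \ref{Lemma12}); what it costs you is the identification of the graded pieces of $\hat R_0$ and $\hat T$ with those of $\hat R_0/P(\hat R_0)_{\infty}$ and $\hat T/P(\hat T)_{\infty}$.

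That identification is where your closing paragraph gets the delicate point backwards. In the paper the ideals $\mathcal P_{\gamma}(\hat R_0)$ and $\mathcal P_{\gamma}(\hat T)$ of Lemma \ref{LemmaA21} \emph{are} the ideals of the valuation $\hat\nu^*$ of (\ref{eqA31}); this is how the lemma is used immediately afterwards, where $[w_i]\in \mathcal P_{\hat\nu^*(w_i)}(\hat T)/\mathcal P^+_{\hat\nu^*(w_i)}(\hat T)$ and (\ref{eqA23}) is a statement about $\hat\nu^*$. So declaring the value to be $\infty$ on $P(\hat R_0)_{\infty}$ is not a free convention: you must check that it produces the same ideals as $\hat\nu^*$ for $\gamma\in\Gamma_{\nu}$, resp.\ $\gamma\in\Gamma_{\nu^*}$. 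This is true, and it is precisely the content of your pullback identities: since $\Gamma_{\nu^*}$ is an isolated subgroup of $\ZZ^{n-\lambda}\times\Gamma_{\nu^*}$, any element whose $\hat\nu^*$-value has a nonzero entry in the $\ZZ^{n-\lambda}$-part has value larger than every element of $\Gamma_{\nu^*}$, and by the construction (\ref{eqA31}) these elements are exactly the elements of $P(\hat S)_{\infty}$, while for $f\notin P(\hat S)_{\infty}$ the value $\hat\nu^*(f)$ is the canonical-extension value of the residue of $f$. To transfer this to $\hat R_0$ and $\hat T$ one also needs $P(\hat S)_{\infty}\cap\hat R_0=P(\hat R_0)_{\infty}$ and $P(\hat S)_{\infty}\cap\hat T=P(\hat T)_{\infty}$, together with the compatibility of the canonical extensions on ${\rm QF}(\hat R_0/P(\hat R_0)_{\infty})$ and ${\rm QF}(\hat T/P(\hat T)_{\infty})$ with the one on ${\rm QF}(\hat S/P(\hat S)_{\infty})$; these follow from short rank-one estimates (compare a Cauchy sequence in $R_0$ or $T$ with one in $S$, using $\nu^*(m_{\hat S}^N)\ge N\nu^*(m_S)$), but they need to be stated rather than waved away. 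With those identifications in place, your applications of Lemma \ref{Lemma12} to $R_0$ (with $\nu$) and to $T$ (with $\nu^*$; $T$ is Noetherian because the integral closure of the excellent ring $R_0$ in $K^*$ is finite over $R_0$) do give the lemma.
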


\begin{proof} Suppose that $\gamma\in \Gamma_{\nu}$. Then 
$\mathcal P_{\gamma}(\hat R_0)\cap R_0=\mathcal P_{\gamma}(R_0)$ and $\mathcal P^+_{\gamma}(\hat R_0)\cap R_0=\mathcal P^+_{\gamma}(R_0)$. Thus the natural map
\begin{equation}\label{eqA20}
\mathcal P_{\gamma}(R_0)/\mathcal P_{\gamma}^+(R_0)
\rightarrow \mathcal P_{\gamma}(\hat R_0)/\mathcal P_{\gamma}^+(\hat R_0)
\end{equation}
is well defined and injective.
Suppose $F\in \mathcal P_{\gamma}(\hat R_0)$. Then there exists $m\in \ZZ_{>0}$ such that $\hat{\nu}^*(F)=\gamma<m\nu(m_{R_0})$ (since $\gamma\in\Gamma_{\nu}$) and there exist $F'\in R_0$, $h\in m_{\hat R_0}^m$ such that $F'=F+h$. Then
$\hat\nu^*(F')=\hat\nu^*(F)=\gamma$ and 
$$
F'\equiv F\mbox{ mod }\mathcal P_{\gamma}^+(\hat R_0).
$$
Thus (\ref{eqA20}) is an isomorphism for all $\gamma\in \Gamma_{\nu}$. The same argument applied to $T$ establishes the lemma.
\end{proof}

Now we return to the proof of Theorem \ref{Theorem2}.  Suppose $f\in \hat T$ and $\hat\nu^*(f)\in \Gamma_{\nu^*}$. Write
$$
f=\sum_{i=1}^e f_iw_i
$$
with $f_i\in \hat R_0$. By Lemma \ref{LemmaA22}, we have that
\begin{equation}\label{eqA23}
\hat\nu^*(f)=\min_i\{\hat\nu(f_i)+\hat\nu^*(w_i)\}
\end{equation} 
and there is a unique value of $i$ giving this value. In particular, the classes
$$
[w_i]:= {\rm in}_{\nu^*}(w_i)\in \mathcal P_{\hat\nu^*(w_i)}(\hat T)/\mathcal P^+_{\hat\nu^*(w_i)}(\hat T)
$$
for $1\le i\le e$ generate
$$
G_2:= \bigoplus_{\gamma\in \Gamma_{\nu^*}}\mathcal P_{\gamma}(\hat T)/\mathcal P_{\gamma}^+(\hat T)
$$
as a
$$
G_1 := \bigoplus_{\gamma\in \Gamma_{\nu}}\mathcal P_{\gamma}(\hat R_0)/\mathcal P_{\gamma}^+(\hat R_0)
$$
module. $[w_1],\ldots, [w_e]$ are a free basis of $G_2$ over $G_1$ since $\hat\nu^*(w_i)-\hat\nu^*(w_j)\not \in \Gamma_{\nu}$ if $i\ne j$.

It follows from  Theorem 4.10 and Lemma 4.4 \cite{CP} that $E_1/F_1$ is Galois with Galois group 
$G(E_1/F_1)\cong\ZZ^n/A\ZZ^n$. We summarize the construction given there of a natural isomorphism. 
Let $\omega$ be a primitive $e$-th root of unity in $k$. An element $\sigma\in G(E_1/F_1)$ is determined by its action on $z_i$ for $1\le i\le s$. Since $z_i^e\in \hat R$ for $1\le i\le s$ we have that $\sigma(z_i)=\omega^{c_i}z_i$ for some $c_i\in \ZZ$. From the relation $x_i=z_1^{a_{i1}}\cdots z_s^{a_{is}}$
for $1\le i\le s$, we have that
$$
G(E_1/F_1)=\{c\in \ZZ^s\mid Ac\in e\ZZ^s\}/e\ZZ^s
$$
and the natural map
\begin{equation}\label{eqA40}
\Psi:\ZZ^s/A\ZZ^s\rightarrow G(E_1/F_1)
\end{equation}
defined by $\Psi(c)=({\rm adj }A) c$ is an isomorphism. 
We have shown that  $E_0$ is Galois over $F_0$ (shown before (\ref{eqA30})) and $[E_0:F_0]=e$ by (\ref{eqA9}). Suppose $\sigma\in G(E_1/F_1)$. Since $E_0$ is Galois over $F_0$, we have that $\sigma|E_0\in G(E_0/F_0)$. Suppose $\sigma|E_0= {\rm id}$. $E_0=F_0(K^*)$ then implies $\sigma|K^*={\rm id}$, and $E_1=F_1(K^*)$ implies $\sigma={\rm id}$. Now
$|G(E_1/F_1)|=|G(E_0/F_0)|=e$ implies the restriction map $G(E_1/F_1)\rightarrow G(E_0/F_0)$ is an isomorphism.
Comparing with the isomorphism (\ref{eqA40}) and (\ref{eqA11}) we have a natural group isomorphism
$$
\Gamma_{\nu^*}/\Gamma_{\nu}\cong \ZZ^s/A^t\ZZ^s\cong \ZZ^s/A\ZZ^s\cong G(E_0/F_0).
$$
 This gives us a natural ``diagonal'' action of $\Gamma_{\nu}/\Gamma_{\nu^*}$ on 
$\hat T=\bigoplus_{i=1}^e\hat R_0w_i$ and ${\rm gr}_{\nu^*}(T)=\bigoplus_{i=1}^e{\rm gr}_{\nu}(R_0)[w_i]$ such that
$$
(\hat T)^{\Gamma_{\nu}/\Gamma_{\nu^*}}\cong \hat R_0
$$
and
$$
{\rm gr}_{\nu^*}(T)^{\Gamma_{\nu^*}/\Gamma_{\nu}}\cong {\rm gr}_{\nu}(R_0),
$$
completing the proof of Theorem \ref{Theorem2}.


\begin{thebibliography}{99}
\bibitem{RAF} S. Abhyankar, On the Ramification of Algebraic Functions, Amer. J. Math. 77 (1955), 575 - 592.
\bibitem{Ab1} S. Abhyankar, On the valuations centered in a local domain, Amer. J. Math. 78 (1956), 321 - 348.
\bibitem{LU} S. Abhyankar, Local uniformization of algebraic surfaces over ground fields of characteristic $p\ne 0$, Annals of Math. 63 (1956), 491 -526.
\bibitem{RTM} S. Abhyankar, Ramification theoretic methods in algebraic geometry, Princeton Univ Press, 1959.
\bibitem{RES} S. Abhyankar, Resolution of singularities of embedded algebraic surfaces, second edition, Springer Verlag, New York, Berlin, Heidelberg, 1998. 

\bibitem{BG} W. Bruns and J. Gubeladze, Polytopes, Rings and K-theory, Springer Verlag, 2009.



\bibitem{C} S.D. ~Cutkosky, \textit{Local factorization and monomialization of morphisms}, Ast\'erisque
\textbf{260}, 1999.

\bibitem{C2} S.D. ~Cutkosky, \textit{Ramification of valuations and local rings in positive characteristic}, preprint.
\bibitem{CG} S.D. ~Cutkosky and L. ~Ghezzi,  \textit{Completions of valuation rings},
Contemp. math. \textbf{386} (2005), 13 - 34.
\bibitem{CP} S.D. ~Cutkosky and O. ~Piltant, \textit{Ramification of Valuations}, Advances in Math. \textbf{183} (2004), 1-79.
\bibitem{CV1} S.D. ~Cutkosky and Pham An Vinh, \textit{Valuation semigroups of two dimensional  local rings}, Proceedings of the London Mathematical Society \textbf{108} (2014), 350 - 384.


\bibitem{GHK} L. ~Ghezzi, Huy T\`ai H\`a and O. ~Kashcheyeva, \textit{Toroidalization of generating sequences in dimension two
function fields}, J. Algebra \textbf{301} (2006) 838-866.
\bibitem{GK} L. ~Ghezzi and O. ~Kashcheyeva, \textit{Toroidalization of generating sequences in dimension two
function fields of positive characteristic}, J. Pure Appl. Algebra \textbf{209} (2007), 631-649.

\bibitem{GAST} F.J. Herrera Govantes, F.J. Olalla Acosta, M. Spivakovsky, G. Teissier, Extending a valuation centered in a local domain to its formal completion, Proc. London Math. Soc. 105 (2012), 571 - 621.



\bibitem{EGA} A. Grothendieck and A. Dieudonn\'e, El\'ements de g\'eom\'etrie alg\'ebrique IV, vol. 2, Publ. Math. IHES 24 (1965).

\bibitem{GM} A. Grothendieck and J. Murre, The tame fundamental group of a formal neighbourhood of a divisor with normal crossings on a scheme, Lecture Notes in Mathematics, 208, Springer Verlag, Berlin - New York, 1971

\bibitem{HS} W. Heinzer and J. Sally, Extensions of valuations to the completion of a local domain, J. Pure and Appl. Algebra 71 (1991), 175 - 185.


\bibitem{J} H.E.W. Jung, Darstellung der Funktionen eines Algebraischen K\"orpers  zweier unabh\"angigier Var\"anderlichen $x,y$ in der Umbegung einer Stelle $x=a$, $y=b$, J. Reine Angew. Math. 133 (1908), 289 - 314.

\bibitem{Ku1} F.-V. Kuhlmann,   Valuation theoretic and model theoretic aspects of local uniformization, in Resolution of Singularities -
A Research Textbook in Tribute to Oscar Zariski, H. Hauser, J. Lipman, F. Oort, A. Quiros (es.), Progress in Math. 181, Birkh\"auser (2000), 4559 - 4600.

\bibitem{Ku2} F.-V. Kuhlmann, A classification of Artin Schreier defect extensions and a characterization of defectless fields,
Illinois J. Math. 54 (2010), 397 - 448.


\bibitem{Lu} I. Luengo, A new proof of the Jung-Abhyankar theorem, J. Algebra 85 (1983), 399 - 409.

\bibitem{PG} A. Parus\'inski and G. Rond, The Abhyankar-Jung theorem, J. Algebra 365 (2012), 29 - 41.


\bibitem{Sp} M. Spivakovsky, Valuations in function fields of surfaces, Amer. J. Math. 112 (1990), 107 - 156.


\bibitem{T} B. ~Teissier,   \textit{Valuations, deformations and toric geometry}, Valuation theory and its applications II,
F.V. Kuhlmann, S. Kuhlmann and M. Marshall, editors, Fields
Institute Communications \textbf{33} (2003), Amer. Math. Soc., Providence, RI, 361
-- 459.

\bibitem{T2} B. Teissier, Overweight deformations of affine toric varieties and local uniformization, preprint.


\bibitem{V} O.E. Villamayor U., Equimultiplicity, algebraic elemination and blowing up, Adv. Math. 262 (2014), 313-369/

\bibitem{ZS2} O. Zariski and P. Samuel, Commutative Algebra Volume II, Van Nostrand, 1960.


\end{thebibliography}
\end{document}